\documentclass[reqno]{amsart}

\usepackage[all]{xy}

\usepackage{amssymb}
\usepackage{amsmath}
\usepackage{mathrsfs}
\usepackage{epsfig}
\usepackage{amscd}
\usepackage{graphicx}
\usepackage{color}
\usepackage{geometry}
\usepackage{hyperref}

\usepackage{tikz-cd}
\usepackage{tikz}
\usetikzlibrary{matrix,arrows,decorations.pathmorphing}
	\usepackage{amsthm, latexsym, mathtools}
    \usepackage{bbm}

\def\E{\ifmmode{\mathbb E}\else{$\mathbb E$}\fi} 
\def\N{\ifmmode{\mathbb N}\else{$\mathbb N$}\fi} 
\def\R{\ifmmode{\mathbb R}\else{$\mathbb R$}\fi} 
\def\Q{\ifmmode{\mathbb Q}\else{$\mathbb Q$}\fi} 
\def\C{\ifmmode{\mathbb C}\else{$\mathbb C$}\fi} 
\def\H{\ifmmode{\mathbb H}\else{$\mathbb H$}\fi} 
\def\Z{\ifmmode{\mathbb Z}\else{$\mathbb Z$}\fi} 
\def\P{\ifmmode{\mathbb P}\else{$\mathbb P$}\fi} 
\def\T{\ifmmode{\mathbb T}\else{$\mathbb T$}\fi} 
\def\SS{\ifmmode{\mathbb S}\else{$\mathbb S$}\fi} 
\def\DD{\ifmmode{\mathbb D}\else{$\mathbb D$}\fi} 

\newcommand{\del}{\partial}

\newcommand{\ben}{\begin{enumerate}}
\newcommand{\een}{\end{enumerate}}
\newcommand{\be}{\begin{equation}}
\newcommand{\ee}{\end{equation}}
\newcommand{\bea}{\begin{eqnarray}}
\newcommand{\eea}{\end{eqnarray}}
\newcommand{\beastar}{\begin{eqnarray*}}
\newcommand{\eeastar}{\end{eqnarray*}}
\newcommand{\bc}{\begin{center}}
\newcommand{\ec}{\end{center}}

\theoremstyle{plain} 
\newtheorem{thm}{Theorem}[section]
\newtheorem{cor}[thm]{Corollary}
\newtheorem{lem}[thm]{Lemma}
\newtheorem{prop}[thm]{Proposition}

\newtheorem{sublem}[thm]{Sublemma}

\theoremstyle{definition}
\newtheorem{exm}[thm]{Example}

\newtheorem{defn}[thm]{Definition}
\newtheorem{rem}[thm]{Remark}
\newtheorem{ques}[thm]{Question}

\newtheorem{cond}[thm]{Condition}

\newtheorem*{thm*}{Theorem}

\numberwithin{equation}{section}

\def\R{{\mathbb R}}

\def\E{{\mathbb E}}
\def\Z{{\mathbb Z}}
\def\C{{\mathbb C}}
\def\R{{\mathbb R}}
\def\P{{\mathbb P}}

\def\N{{\mathbb N}}

\def\11{{\mathbb I}}

\def\C{\mathbb{C}}
\def\Z{\mathbb{Z}}

\def\T{\mathbb{T}}

\def\Q{\mathbb{Q}}
\def\Y{\mathbb{Y}}

\def\E{\ifmmode{\mathbb E}\else{$\mathbb E$}\fi} 
\def\N{\ifmmode{\mathbb N}\else{$\mathbb N$}\fi} 
\def\R{\ifmmode{\mathbb R}\else{$\mathbb R$}\fi} 
\def\Q{\ifmmode{\mathbb Q}\else{$\mathbb Q$}\fi} 
\def\C{\ifmmode{\mathbb C}\else{$\mathbb C$}\fi} 
\def\H{\ifmmode{\mathbb H}\else{$\mathbb H$}\fi} 
\def\Z{\ifmmode{\mathbb Z}\else{$\mathbb Z$}\fi} 
\def\P{\ifmmode{\mathbb P}\else{$\mathbb P$}\fi} 
\def\SS{\ifmmode{\mathbb S}\else{$\mathbb S$}\fi} 
\def\DD{\ifmmode{\mathbb D}\else{$\mathbb D$}\fi} 

\def\R{{\mathbb R}}

\def\E{{\mathbb E}}
\def\Z{{\mathbb Z}}
\def\C{{\mathbb C}}
\def\R{{\mathbb R}}
\def\M{{\mathbb M}}

\def\N{{\mathbb N}}
\def\FF{{\mathcal F}}

\def\id{{\operatorname{id}}}

\def\CA{{\mathcal A}}

\def\CC{{\mathcal C}}

\def\CF{{\mathcal F}}

\def\CH{{\mathcal H}}

\def\CL{{\mathcal L}}
\def\CM{{\mathcal M}}
\def\CN{{\mathcal N}}

\def\CP{{\mathcal P}}

\def\CP{{\mathcal P}}
\def\CS{{\mathcal S}}

\def\CU{{\mathcal U}}
\def\CV{{\mathcal V}}

\def\darr#1{\raise1.5ex\hbox{$\leftrightarrow$}
\mkern-16.5mu #1}

\def\roughly#1{\raise.3ex\hbox{$#1$\kern-.75em
\lower1ex\hbox{$\sim$}}}

\def\opname#1{\mathop{\kern0pt{\rm #1}}\nolimits}

\def\Im{\opname{Im}}

\def\dim{\opname{dim}}
\def\vol{\opname{vol}}

\def\supp{\operatorname{supp}}

\def\span{\operatorname{span}}

\def\rank{\opname{rank}}
{

\def\supp{\operatorname{supp}}

\def\span{\operatorname{span}}

\def\Image{\operatorname{Image}}
\def\ev{\operatorname{ev}}

\def\codim{\operatorname{codim}}

\def\nullity{\operatorname{nullity}}
\def\image{\operatorname{Image}}

\def\nullity{\text{\rm nullity }}

\DeclareFontFamily{U}{MnSymbolC}{}
\DeclareSymbolFont{MnSyC}{U}{MnSymbolC}{m}{n}
\DeclareFontShape{U}{MnSymbolC}{m}{n}{
    <-6>  MnSymbolC5
   <6-7>  MnSymbolC6
   <7-8>  MnSymbolC7
   <8-9>  MnSymbolC8
   <9-10> MnSymbolC9
  <10-12> MnSymbolC10
  <12->   MnSymbolC12}{}
\DeclareMathSymbol{\intprod}{\mathbin}{MnSyC}{'270}

\begin{document}

\title[Stratifications associated to closed two-forms]
{Stratifications associated to generic closed two-forms and stratified $L_\infty$ spaces}
\author{Taesu Kim, Yong-Geun Oh}
\address{Department of Mathematics, POSTECH,
77 Cheongam-ro, Nam-gu, Pohang-si, Korea 37673}
\email{kimtaesu@postech.ac.kr}
\address{Center for Geometry and Physics, Institute for Basic Science (IBS), 79 Jigok-ro 127beon-gil, 
Nam-gu, Pohang-si, 
Gyeongsangbuk-do, KOREA 37673 \& POSTECH, Pohang-si, Korea 37673}
\email{yongoh1@postech.ac.kr}

\thanks{This work is supported by the IBS project \# IBS-R003-D1.}


\begin{abstract} Park and the second-named author \cite{oh-park:coisotropic} introduce
the deformation problem of coisotropic submanifolds of a symplectic 
 manifold as the study of  Mauer-Cartan moduli problem of
 an $L_\infty$ algebra attached to the foliation de-Rham complex 
associated to the null foliation of the corresponding presymplectic structure. The main purpose of
the present paper is to extend this study of $L_\infty$ structures to the case of generic closed two-forms
on arbitrary smooth manifolds \emph{as a stratified $L_\infty$ space}. 
We first prove that there exists a residual subset of closed 2-forms, which
we denote by $Z^2_{\text{\rm reg}}(M) \subset Z^2(M)$, such that any element 
$\omega$ therefrom  admits a Whitney stratification each of whose strata
is a presymplectic manifold. We then associate an $L_\infty$ space to each stratum
(and to its tubular neighborhood) and \emph{glue}  the collection of $L_\infty$ spaces 
to a global stratified $L_\infty$ space by the coordinate atlas, which is
a collection consisting of $L_\infty$ morphisms,  \emph{not necessarily} of  quasi-isomorphisms. 
\end{abstract}

\keywords{closed 2-forms, presymplectic manifolds,  (curved) $L_\infty$ spaces, Whitney stratification,
stratified $L_\infty$ spaces, admissible embeddings}
\subjclass[2010]{Primary 53D42; Secondary 58J32}

\maketitle

\tableofcontents

\section{Introduction and overview}

\subsection{Motivation}

The \emph{virtual intersection theory} on the moduli space of
pseudoholomorphic curves has played a fundamental role
in the development of  the Gromov-Witten theory both on the  
projective algebraic manifolds \cite{li-tian,behrend,behrend-fantechi} 
and on the symplectic manifolds
\cite{fukaya-ono:arnold,ruan:virtual,liu-tian,siebert}.
 The crux of the machinery is the existence of compactification
that admits the so called \emph{virtual fundamental cycles} on the moduli space of
(pseudo)holomorphic curves. The symplectic form or the algebraicity of 
the bulk essentially enters in the compactification of the moduli space.
Furthermore the latter seems to play an important role in the construction of algebraic virtual
fundamental class in algebraic geometry in the study of infinitesimal deformation theory of
its  intrinsic normal cones. (See  \cite{li-tian}, \cite{behrend}, \cite{behrend-fantechi},
\cite{graber-virtual}.)
Indeed in the recent development of virtual intersection theory
in relation to the moduli theory of Donaldson-Thomas invariants \cite{oh-thomas} and (higher genus) 
Gromov-Witten invariants, the concept of  \emph{shifted symplectic geometry} \cite{shifted}
has been useful to construct various types of
 virtual fundamental cycles or chains in algebraic geometry.

On the other hand, as far as the present authors are aware, the symplectic form
has \emph{not} played any role in the \emph{infinitesimal} deformation theory of the moduli space
of pseudoholomorphic curves \emph{in the smooth context}.  
As a consequence, the compactified moduli space does not
see the difference, in the infinitesimal level,
 of whether the bulk is just an almost complex $(M,J)$ or an almost K\"ahler $(M,\omega, J)$.
However it seems to be natural to us that even in the smooth context
one anticipate some role of the \emph{`closed 2-form'}
on the moduli space  of pseudoholomorphic curves induced from the symplectic form on the bulk. 
Some more detail on this line of thinking is now in order.

Let $(\Sigma,j)$ be a compact Riemann surface and equip it with the Poinca\'re metric denoted by $h = h(j)$.
We denote by $dA$ the associated volume form of $\Sigma$.
A natural geometric structure one can think of in the latter case is the fact that the space of
smooth maps 
$$
\CF_{\Sigma}(M,\omega) : = C^\infty(\Sigma, (M,\omega))
$$
carries a canonical $L^2$-symplectic form defined by
\be\label{eq:L2symplectic}
\widetilde \omega(u)(\xi_1,\xi_2): = \int_\Sigma \omega(\xi_1, \xi_2)\, dA
\ee
for sections $\xi_1, \, \xi_2 \in \Gamma(u^*TM)$.  This form is \emph{weakly nondegenerate}
on $\CF_{\Sigma}(M,\omega)$ and canonically induces a \emph{closed} two-form
on any smooth finite dimensional submanifold thereof. Such a natural form does not 
exist when the bulk is just an almost complex manifold $(M, J)$. One might attempt to
exploit this presence of canonical differential `closed'  two-form in the (virtual) deformation theory
of the moduli space. For this purpose, the first natural question to ask is the following:

\begin{ques}\label{ques:form}Let 
$$
\widetilde \CM((\Sigma,j, (M,J); \beta)) \subset \CF((\Sigma,j), (M,J,\omega); \beta))
$$
be the moduli space  of $(j,J)$-holomorphic maps in class $\beta \in H_2(M,\Z)$. 
Suppose that the moduli space
 is smooth. Would the closed two form $\widetilde \omega_\beta$ thereon induced from 
 $ \CF((\Sigma, h), (M,\omega); \beta))$ carry a good structure of \emph{presymplectic 
 stratified space} for a `generic' choice of $J$?
 \end{ques}
 As it is, the question is quite ill-formulated by several reasons: For example,
 \begin{enumerate}
 \item In general we cannot achieve the smooth hypothesis imposed by perturbation of $J$ alone.
 \item Without having the smoothness, the notion of differential form $\widetilde \omega_\beta$ does not make
 sense let alone its closedness.
 \end{enumerate}
 At least, both should be considered in the sense of \emph{virtual deformation theory}
 as in the context of derived geometry, for example. We postpone this virtual theory as a future work.
 We refer to \cite{taesu} for a preliminary attempt to involve the presymplectic structure in the
 definition of (enhanced) Kuranishi structures. See \cite{costello}, \cite{grady-gwiliam1,grady-gwiliam2},
 \cite{amorim-tu}, \cite{pingxu}, \cite{behrend-liao-xu} 
for the derived geometry in various contexts of smooth differential geometry.

Once the above question is answered, the following is the next natural question to ask.

\begin{ques}\label{ques:volume} Assume the above `closed' two-form is equipped with the 
moduli space (or compactified moduli space).
\begin{enumerate}
\item Is the closed two-form nondegenerate away form codimension 1 subset?
\item Let $(\Sigma_g,j)$ be a Riemann surface of genus $g$ equipped with the 
Poincar'e metric $h$ of $\text{\rm Area}(\Sigma,h) = 1$. Compute the Liouville volume of the moduli space 
$\CM_g((M,J); \beta))$ in terms of $(M,\omega)$, genus $g$ and $\beta$.
\end{enumerate}
\end{ques}
Recall that when the target $(M,\omega)$ is a point,  i.e., the case of Deligne-Mumford 
moduli space, the structure of the collection of the Weil-Peterson volumes of compact Riemann surfaces
with geodesic boundaries under the gluing
operations has been extensively studied by Mirzakhani \cite{mirzakhani} in relation to
the intersection theory of the moduli space of curves. 

\begin{rem}
In this regard, it would be interesting to answer the above questions even for 
the simplest case of Gromov-Witten moduli space of $\C P^n$ beyond the case of point.
In fact, it can be shown that when the target space $(M,\omega, J)$ is a K\"ahler manifold, 
not just an almost K\"ahler, the induced form is indeed nondegenerate on each 
smooth component of the Gromov-Witten moduli space. Therefore 
Question \ref{ques:volume} is even more well-posed and doable in that case. We hope to come back 
to this question elsewhere in the future.
\end{rem}

 \subsection{Statement of main results}
 
With the above motivation in our mind, we turn to the study of the problem of studying
the general geometry of  generic closed 2-forms on a finite dimensional  smooth manifold.
Since the cases of $N = \dim M  \leq 2$ are rather trivial, we will assume
\be\label{N>2}
N = \dim M \geq 3.
\ee
(There is also a good analytical reason to separate the cases of $\dim M \leq 2$ from the higher dimensional
cases in our consideration: We have nothing to study
for $\dim M =1$. On the other hand, in dimension 2 the behavior of 
the singularity of the Green function of a Laplacian is \emph{logarithmic} unlike the cases of $\dim M \geq 3$
which forces us to separate the case of $\dim =2$ from our consideration of transversality study in
the proof of Proposition \ref{prop:transversality}. The case of dimension 2 is, however, so simple that
it  can be easily analyzed by an explicit finite dimensional argument, 
without going through this infinite dimensional transversality theory.)
 
The main purpose of the present paper is to expose the infinitesimal structure of 
 a generic smooth closed two-form on a smooth finite dimensional manifold.
Let $M$ be a smooth manifold and $\omega$ be a closed 2-form
of \emph{not necessarily of constant rank}. When $\omega$ has constant rank
the pair $(M,\omega)$ is called a \emph{presymplectic manifold}. Gotay \cite{gotay}
proved any presymplectic manifold can be embedded into a symplectic manifold 
as a coisotropic submanifold and provided a normal form of the symplectic form
on a neighborhood of the associated coisotropic submanifold: The normal form involves
the associated null foliation of the closed two-form and a choice of 
transversal section of the leaves of foliations. Utilizing this normal form, Park and
the second-named author  \cite{oh-park:coisotropic}
described deformations of a coisotropic submanifold 
and its Maurer-Cartan moduli functor  in terms of an $L_\infty[1]$ 
algebras (or strongly homotopy Lie algebras). They called such a space
a \emph{strong homotopy Lie algebroid} which is commonly called an 
$L_\infty$ space in the literature. We also adopt the latter term in the present
paper.

We consider the following subsets of $M$
\be\label{eq:Mj}
Y_m(\omega) : = \{ x\in M \mid \nullity \omega_x = m\}
\ee
for each given integer $0 \leq m \leq N$.  We consider the decomposition of $M$ into
$$
M = \bigcup_{ 0 \leq m \leq N} Y_m(\omega).
$$
A priori the subsets $Y_m$ could be very wild in general.
The following is the first main result we establish. 

\begin{thm}[Theorem \ref{thm:adaptedness}] There exists a residual subset, denoted by $Z^2_{\text{\rm reg}}(M)$,
of $Z^2(M)$ such that the decomposition $\CS_\omega = \{Y_m(\omega)\}_{m=0}^{N}$
forms a Whitney stratification of $M$ for any $\omega \in Z^2_{\text{\rm reg}}(M)$.
\end{thm}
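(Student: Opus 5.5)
The plan is to realize the decomposition $\CS_\omega$ as the pull-back of a canonical stratification under a transversal section, and to obtain genericity from a parametric transversality argument. At each $x\in M$, consider the finite dimensional vector space $\Lambda^2 T_x^*M$ of skew forms. It carries the rank stratification $\Lambda^2 T^*_xM = \bigcup_m \Sigma_m$, where $\Sigma_m = \{\alpha \mid \nullity \alpha = m\}$ and $m \equiv N \pmod 2$. This is an algebraic, $GL(T_xM)$-invariant stratification whose orbits are exactly the $\Sigma_m$. It is therefore a Whitney stratification: each $\Sigma_m$ is a single orbit, and an orbit stratification of an algebraic group action with finitely many orbits is Whitney, since Whitney's condition (b) holds generically along each stratum and homogeneity propagates it everywhere. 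The codimension of $\Sigma_m$ is $m(m-1)/2$. Globalizing, we get a fiberwise Whitney stratification $\{\Sigma_m(M)\}$ of the total space of the bundle $\Lambda^2T^*M \to M$, which is locally trivial because the structure group preserves the strata.

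A closed 2-form $\omega$ is a section $\omega: M \to \Lambda^2 T^*M$, and $Y_m(\omega) = \omega^{-1}(\Sigma_m(M))$. The standard fact I will use is that the preimage of a Whitney stratification under a smooth map transverse to all of its strata is again a Whitney stratification. The preimage strata are submanifolds of the same codimension, and condition (b) pulls back because the map is locally a submersion onto a normal slice to the stratum. So the theorem reduces to showing that the set $Z^2_{\text{\rm reg}}(M)$ of closed $\omega$ whose section is transverse to every $\Sigma_m(M)$ is residual in $Z^2(M)$, with the $C^\infty$ topology, which is Fréchet and hence Baire.

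For this I would use the Sard–Smale parametric transversality theorem in the form of Proposition \ref{prop:transversality}. Work with $C^k$ closed forms, which form a Banach space, and consider the evaluation map $\ev: Z^2_{C^k}(M)\times M \to \Lambda^2T^*M$, $(\omega,x)\mapsto \omega_x$. The key point is that $\ev$ is a submersion: its differential in the $\omega$-direction at $x$ is $\eta \mapsto \eta_x$ for closed $\eta$, and every $\alpha \in \Lambda^2T^*_xM$ is the value at $x$ of some closed form. Indeed, exact forms $d(\chi\,\beta)$, with $\beta$ a linear 1-form in a chart around $x$ and $\chi$ a cutoff equal to $1$ near $x$, already realize every constant 2-form at $x$. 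Parametric transversality then gives a residual set of $C^k$ closed forms transverse to each of the finitely many strata. A Taubes-type intersection over $k$, together with exhaustion by compact subsets if $M$ is noncompact, upgrades this to residuality in $Z^2(M)$.

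I expect the main obstacle to be bookkeeping in the last step rather than the surjectivity itself. One has to work in the Banach completion of closed forms, check the Fredholm and second-countability hypotheses, and pass from $C^k$ to $C^\infty$. The paper's remark about Green functions suggests the authors instead build closed perturbations by Hodge-theoretic means, but the local exact-form construction above seems to avoid that. Finally, I would note that since $\operatorname{codim} Y_m = m(m-1)/2$, strata with $m(m-1)/2 > N$ are empty for generic $\omega$, and the frontier condition follows from the closure relations $\overline{\Sigma_m} = \bigcup_{m'\ge m}\Sigma_{m'}$ pulled back under transversality.
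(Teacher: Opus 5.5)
Your argument is correct, and its skeleton is the paper's: pull back the nullity stratification of $\Lambda^2T^*M$ by a section transverse to every stratum, get that transversality from Sard--Smale on $C^k$ forms, and intersect over $k$. Both key lemmas, however, are proved differently.

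For transversality, the paper works on all of $M\times\Omega^2(M)$ with $\Upsilon(x,\omega)=(\omega_x,d\omega)$. It proves Proposition \ref{prop:transversality} by an $L^p$ cokernel argument: an annihilating pair $(Q,\eta)$ yields $\Delta\zeta+Q^\perp\delta_x=0$, and a Green-function computation (Lemma \ref{lem:eta=0}) forces $\eta=0$ and then $Q^\perp=0$. Fredholmness of $\Pi_m$ is checked separately. You instead take the Banach space $Z^2_{C^k}(M)$ itself as parameter space and observe that evaluation is a submersion, because $d(\chi\beta)$ with $\beta$ linear realizes any value at $x$. This is the very computation the paper uses only afterwards, in its ampleness lemma for non-emptiness.

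Your route turns transversality into pointwise linear algebra with no elliptic theory. It also avoids the connection with $\nabla\omega|_x=0$ from Theorem \ref{thm:omega-special}, which the paper's transversality and Fredholm arguments invoke. Such a connection need not exist at degenerate points: for the closed form $\omega=x_1\,dx_1\wedge dx_2$ one has $\omega_0=0$, hence $\nabla_{\partial_1}\omega|_0=dx_1\wedge dx_2$ for every connection. At a transverse point of $Y_m$ with $m\ge 2$ it never exists, since the normal component of $\nabla\omega|_x$ does not depend on the connection.

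Similarly, Lemma \ref{lem:Whitney} rests only on subanalyticity, which by itself guarantees just a Whitney refinement of the nullity partition. Your homogeneity argument shows the partition itself is Whitney.

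The paper's formulation, with $d\omega=0$ imposed as an equation on all $2$-forms, is the set-up one would need if admissible perturbations could not be localized. Here they can, and your observation even makes the Banach machinery optional. Finitely many closed forms whose values span every fibre give a finite-dimensional submersive family, so ordinary Sard yields density of transverse forms in the $C^\infty$ topology, leaving only the openness half of your last step.
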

We call any such closed two-form a \emph{nice} closed two-form.
By definition, for any nice closed two-form $\omega$, we have a
Whitney stratification $\{Y_m(\omega)\}_m$ of $M$ such that $\omega$ has constant rank 
on each stratum of the stratification, i.e., each of them
is a \emph{presymplectic} manifold.

We now recall the following main result of \cite[Theorem 9.4]{oh-park:coisotropic}: 
For each given presymplectic manifold $(Y,\omega)$ and a decomposition
$TY = G \oplus T\CF$, we consider
the foliation de Rham complex of the foliation $\CF_\omega$ associated to the null distribution
$\ker \omega \subset TY$ of $(Y,\omega)$ which we denote by
$$
\Omega^*(\CF_\omega): = \Omega^*(\CF_\omega, d_{\CF_\omega}).
$$
Then we can canonically equip it with an $L_\infty$-structure on the graded complex
$$
\left( \bigoplus_* \Omega^*(\CF_\omega)[1], \{\mathfrak l_k^\omega\}_{k \geq 1}\right).
$$
Motivated by this, we now formally name the triple $(Y,\omega,G)$ as follows.
\begin{defn}[Polarized presymplectic manifolds] Let $(Y,\omega)$ be a presymplectic manifold and
$G \subset TY$ be a choice of complementary subbundle of $\CN \subset TY$ as above.
We call such a triple $(Y,\omega, G)$ a \emph{polarized presymplectic manifold}.
\end{defn}
Therefore the foliation de-Rham complex of each polarized presymplectic stratum 
$(Y_j, \iota_j^*\omega, G_j)$ canonically
carries the structure of $L_\infty[1]$ algebra structure.
(See also \cite{cattaneo-felder}, \cite{cattaneo-schatz}, \cite{le-oh:lcs}, \cite{LOTV} for 
different and more systematic presentations of the theorem in various generalized geometric contexts.)

However, the current situation deals with a new general 
situation different therefrom:  We consider two presymplectic manifolds 
$(Y_\alpha, \omega_\alpha)$, $(Y_{\alpha'},\omega_{\alpha'})$
of \emph{different nullities}  with the compatibility relation
$$
\ker \left(\pi_\alpha^{\alpha'}(\omega_\alpha) \right) \supset \ker \omega_{\alpha'}
$$
and their \emph{gluing problem} the outcome of which we regard as
a \emph{stratified $L_\infty$ space}, which we would like to regard as a special case of
stratified $L_\infty$ Kuranishi space arising from the closed form on a finite
dimensional manifold. (See Appendix B for
the relevant discussion in relation to the notion of \emph{$L_\infty$ Kuranishi structure}
formulated by the first named author in \cite{taesu} with the aim of applying it to the moduli problem.)

\begin{thm}[Theorem \ref{thm:gluing-morphisms}]
\label{thm:gluing-intro}
Let $\{(U_m\}_m$ be a Mather's compatible system of neighborhoods of $\{Y_m\}_m$
such that $U_m \cap U_{m'} = \emptyset$ whenever $\dim Y_m = \dim Y_{m'}$. We put
$$
\CV_\alpha^\omega = \{(U_\alpha, \pi_\alpha^*\omega_\alpha, \pi_\alpha^*G_\alpha)\}_\alpha,
\quad \omega_\alpha : = \iota_\alpha^*\omega
$$
which is the pull-back presymplectic atlas of the polarized presymplectic stratification
$$
\CP\CS{(M,\omega)}=\{(Y_\alpha, \omega_\alpha, G_\alpha)\}_{\alpha \in \mathfrak P}
$$
of $(M,\omega)$. Then the following holds:
\begin{enumerate}
\item For each consecutive pair $(\alpha,\alpha')$ with $\alpha < \alpha'$.
Then there exists an $L_\infty$ morphism 
\be\label{eq:Phi-alphaalpha'}
\mathfrak f^{\alpha\alpha'}: \mathfrak l^{\CV^\omega}_\alpha \to \mathfrak l^{\CV^\omega}_{\alpha'}.
\ee
\item For each triple $(\alpha,\beta,\gamma)$ with $\alpha < \beta < \gamma$, the composition 
$\mathfrak f^{\beta\gamma} \circ \mathfrak f^{\alpha\beta}$ are defined
on an open subset $V_{\alpha\beta\gamma} \subset V_\alpha \cap V_\beta \cap V_\gamma$.
\item The two $L_\infty$ morphisms 
$\mathfrak f^{\beta\gamma} \circ \mathfrak f^{\alpha\beta}$ and $\mathfrak f^{\alpha\gamma}$
are canonically $L_\infty$ pseudo-isotopic.
\end{enumerate}
\end{thm}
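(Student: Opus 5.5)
The plan is to construct the $L_\infty$ morphisms out of the functoriality of the foliation de Rham complex under maps of foliated manifolds, and then to obtain the coherence up to pseudo-isotopy from a homotopy between the relevant projections. For each $\alpha$, the pull-back form $\pi_\alpha^*\omega_\alpha$ on $U_\alpha$ has constant rank. Its null foliation $\widetilde\CF_\alpha$ is spanned by the fibers of the Mather tubular projection $\pi_\alpha : U_\alpha \to Y_\alpha$ together with the lifts of the null leaves of $\omega_\alpha$. With the polarization $\pi_\alpha^*G_\alpha$, \cite[Theorem 9.4]{oh-park:coisotropic} gives the $L_\infty[1]$ algebra $\mathfrak l^{\CV^\omega}_\alpha$ on $\Omega^*(\widetilde\CF_\alpha)[1]$, with structure maps $\mathfrak l_k$ built from $d_{\widetilde\CF_\alpha}$ and the Poisson-type brackets determined by $(\pi_\alpha^*\omega_\alpha, \pi_\alpha^*G_\alpha)$.

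For a consecutive pair $\alpha<\alpha'$, I would work on $U_\alpha\cap U_{\alpha'}$ and use Mather's control conditions $\pi_\alpha\circ\pi_{\alpha'}=\pi_\alpha$ and $\rho_\alpha\circ\pi_{\alpha'}=\rho_\alpha$. I would first prove the key geometric lemma: the compatibility relation $\ker(\pi_\alpha^{\alpha'})^*\omega_\alpha \supset \ker\omega_{\alpha'}$, which follows from the Whitney condition and the semicontinuity of the nullity, implies $T\widetilde\CF_{\alpha'}\subset T\widetilde\CF_\alpha$ on the overlap. Restriction of leafwise forms then gives a dg map $\Omega^*(\widetilde\CF_\alpha)\to\Omega^*(\widetilde\CF_{\alpha'})$, and I take this as the linear term $\mathfrak f_1^{\alpha\alpha'}$. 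This map need not intertwine the brackets, because the polarizations $\pi_\alpha^*G_\alpha$ and $\pi_{\alpha'}^*G_{\alpha'}$ differ. To correct this I would compare the two Gotay normal forms: a Moser-type isotopy between the two coisotropic thickenings gives a gauge transformation, and the higher components $\mathfrak f_k^{\alpha\alpha'}$, $k\ge2$, are obtained by the standard tree/homotopy-transfer formula applied to the change of splitting, as in the proof in \cite{oh-park:coisotropic} that the $L_\infty$ structure is independent of $G$ up to isomorphism. This yields (1). The result is only an $L_\infty$ morphism, not a quasi-isomorphism, since restriction to a smaller foliation changes the leafwise cohomology.

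For (2), I would take $V_{\alpha\beta\gamma}$ to be the set of points of $U_\alpha\cap U_\beta\cap U_\gamma$ whose image under $\pi_\beta$ still lies in $U_\alpha$ and where the control conditions hold. This is an open set, and on it both compositions are defined. For (3), the triple control condition $\pi_\alpha\pi_\beta=\pi_\alpha$ makes the linear parts $\mathfrak f_1^{\beta\gamma}\circ\mathfrak f_1^{\alpha\beta}$ and $\mathfrak f_1^{\alpha\gamma}$ literally coincide, since both are restrictions. The higher parts differ only through the choice of intermediate polarization $\pi_\beta^*G_\beta$ versus a direct comparison. Interpolating linearly in the space of complements, which is convex in an affine chart, gives a one-parameter family of splittings. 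Running the construction of (1) with $[0,1]$-parameters produces an $L_\infty$ morphism into $\mathfrak l_{\alpha\gamma}\otimes\Omega^*([0,1])$ restricting to the two morphisms at the endpoints, and this is the required pseudo-isotopy. Canonicity comes from the fact that every choice is either canonical or made in a contractible space.

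The main obstacle is the geometric lemma $T\widetilde\CF_{\alpha'}\subset T\widetilde\CF_\alpha$ near $Y_\alpha$. The difficulty is that $\omega$ itself is not constant rank across strata, and the pulled-back forms $\pi_\alpha^*\omega_\alpha$ and $\pi_{\alpha'}^*\omega_{\alpha'}$ are not obviously related. I would attack it first with Whitney condition (b) together with the control identities, possibly after shrinking the tubes $U_\alpha$.
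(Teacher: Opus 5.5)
Your step (1) has a genuine gap, and it carries over to (3). You take restriction $\Omega^*(\widetilde\CF_\alpha)\to\Omega^*(\widetilde\CF_{\alpha'})$ as $\mathfrak f_1^{\alpha\alpha'}$. You attribute its failure to intertwine brackets only to the change of polarization, and propose to repair it with the change-of-splitting argument of \cite[Theorem 10.1]{oh-park:coisotropic} plus homotopy transfer.

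This cannot work as stated. $\mathfrak l^{\CV^\omega}_\alpha$ and $\mathfrak l^{\CV^\omega}_{\alpha'}$ are built from two \emph{different} presymplectic forms, $\pi_\alpha^*\omega_\alpha$ and $\pi_{\alpha'}^*\omega_{\alpha'}$, generally of different ranks. Their foliations differ, and their Gotay thickenings have different dimensions, so there is no Moser isotopy between them. The change-of-splitting theorem concerns a fixed form and a fixed foliation. Homotopy transfer moves a structure along a contraction; it does not extend an arbitrary chain map to an $L_\infty$ morphism.

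The defect is already visible on basic functions, where polarizations play no role. Take $\phi=\pi_\alpha^*a$ and $\psi=\pi_\alpha^*b$ with $a,b$ basic for $\CF_\alpha$. Then $\mathfrak l_1\phi=\mathfrak l_1\psi=0$, and $\mathfrak f_2(\phi,\psi)$ would have degree $-2$, so it vanishes. The arity-two morphism identity therefore reduces to $\mathfrak l_2^{\alpha}(\phi,\psi)=\mathfrak l_2^{\alpha'}(\phi,\psi)$. Since $\mathfrak l_2(\phi,\psi)=P[[\Pi,\phi],\psi]$ is, up to sign, the transverse Poisson bracket of the Gotay form, this says $\pi_\alpha^*\{a,b\}_{\omega_\alpha}=\{\phi,\psi\}_{\widetilde\omega_{\alpha'}}$. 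In other words, $\pi_\alpha$ must be Poisson on basic functions.

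The kernel inclusion does not imply this, and it fails for non-adapted tubes. Take $\omega=dx\wedge dy+u\,du\wedge dz$, $Y_\alpha=\{u=0\}$, and $\pi_\alpha(x,y,u,z)=(x+uz,\,y+u,\,0,\,z)$. The kernel inclusion holds trivially here, since $\omega$ is symplectic on $\{u\neq0\}$. For $\phi=x+uz$ and $\psi=y+u$ one gets $\{\phi,\psi\}_{\widetilde\omega_\alpha}=\pi_\alpha^*\{x,y\}_{\omega_\alpha}\neq0$, while $\{\phi,\psi\}_\omega=\{x,y\}+\{uz,u\}=1-1\equiv0$ on $\{u\neq0\}$. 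So no choice of higher components makes restriction the linear part. At minimum you would need tubes adapted to $\omega$ together with a mechanism relating the two forms.

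The paper does not try to lift restriction. It treats the kernel inclusion $\ker(\pi_\alpha^{\alpha'})^*\omega_\alpha\supset\ker\omega_{\alpha'}$, which you single out as the main obstacle, as an input: Sublemma \ref{sublem:include} and Lemma \ref{lem:dpijalpha-ker}, tied to the compatible choice of Mather tubes and polarizations. It could not come from Whitney's condition (b) or semicontinuity of nullity anyway. Those constrain dimensions and limiting tangent planes, not the position of $\ker\omega_{\alpha'}$ relative to the fibres of $\pi_\alpha$.

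The paper's actual work is connecting the two forms:
\begin{enumerate}
\item It interpolates by $\omega_t=(R^t_\rho)^*\omega_{\alpha'}$ along the radial retraction of the tube.
\item It writes $(\pi_\alpha^{\alpha'})^*\omega_\alpha=\omega_{\alpha'}+d\beta$, with the kernel jumping only at the endpoint, and solves the Moser equation inside $G_{\alpha'}$.
\item It turns this into a gauge flow on the Cattaneo--Sch\"atz $V$-data which, after the substitution $s=1/t$, is solvable only in one time direction. This yields $\mathfrak f^{\alpha\alpha'}$ (Proposition \ref{prop:Linfty-morphism}), and explains why it is not a quasi-isomorphism.
\item The same construction, applied to the families $\omega^{\alpha\beta}_t$ and $\omega^{\beta\gamma}_t$, gives the pseudo-isotopy in (3).
\end{enumerate}
Your argument for (3) has the same defect: you say the higher terms differ only through the intermediate polarization, but $\pi_\beta^*\omega_\beta$ differs from both $\pi_\alpha^*\omega_\alpha$ and $\pi_\gamma^*\omega_\gamma$. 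Your treatment of (2) is fine.
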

See Section \ref{sec:compatible-system-linftyspaces} for the definition of pseudo-isotopy.
We interpret this result as a gluing result of \emph{strongly homotopy Lie-algebroids} \cite{oh-park:coisotropic}
or the $L_\infty$ spaces of different (virtual) dimensions \cite{amorim-tu}.

\subsection{Discussion}

As far as the authors are aware, algebraic structures derived from geometry are
isomorphic under the (smooth) isotopy in a suitable category of 
algebraic structures  constructed, \emph{whenever relevant deformations 
are not singular}. In the \emph{regular} context,  relevant deformations are commonly \emph{reversible}
and the way how such an isomorphism 
is constructed follows 2 steps: First one defines the notion of 
homotopy between two morphisms $f_0$, $f_1$ and then establishes
a homotopy between the identity and the composition $f_1 \circ f_0$ (and $f_0\circ f_1$).
However there is a situation where the \emph{homotopy is directed} such as in Directed Algebraic Topology
\cite{grandis:directed1,grandis:directed2}. Our construction of an $L_\infty$ morphism, 
which is \emph{not necessarily}
a quasi-isomorphism, naturally belongs to the realm of this \emph{directed category}  because 
our $L_\infty$ morphism is constructed by a smooth deformation retraction that however
decreases the rank of the initial presymplectic form along the way.  Furthermore the 
smooth isotopy constructed involves an isotopy
that vanishes at infinity order in time \emph{at the moment where the rank drops}
 the construction of which cannot be reversed
in time. In fact, this isotopy should be regarded as a smooth map on the space-time
\emph{ in the  admissible category in the sense of \cite[Section 25.1]{fooo:book-kuranishi}}.

Because of these reasons, there is no means to construct a homotopy between
the identity map to the composition $f_0 \circ f_1$ or $f_0\circ f_1$ or both.
We refer readers to Section \ref{sec:gluing}.

An organization of the present paper is now in order. In Section \ref{sec:revisit},
we revisit the construction of the structure of an $L_\infty$ space
from \cite{oh-park:coisotropic} associated to each presymplectic manifold 
$(M,\omega)$. 
We slightly amplify the construction in relation to the notion of \emph{curved $L_\infty$ spaces}
\cite{costello}, \cite{amorim-tu} by describing an explicit family of coaugmentations, or $\mathfrak l_0$
terms of a curved $L_\infty$ space. After this, the paper is divided into 2 parts. In Part I, we
study the generic structure of the decomposition of $M$ into the subsets $Y_m$
on which the nullity of the  value $\omega_x$ at $x \in M$ is $m$ with $m = 1, \cdots, \dim M$.
We prove that this decomposition leads to a Whitney stratification so that each stratum $Y_m$
becomes a presymplectic manifold by  the restricted closed two-form $\omega|_{Y_m}$ thereon.
In Part II, we then prove compatibility of the $L_\infty$ structures associated to $Y_m$ 
by verifying the compatibility condition laid out in Theorem \ref{thm:gluing-intro}.

\medskip

{\bf Acknowledgement:} The current research was geminated during the  
IBS-CGP and MATRIX workshop on Symplectic Topology held in MATRIX 
Conference Center in 2022. We would like to acknowledge MATRIX and 
the Simons Foundation for their support and funding 
through the MATRIX-Simons Collaborative Fund. We also thank the local
organizer Brett Parker for his hard work and hospitality during the workshop.

\section{$L_\infty$ spaces and presymplectic manifolds:  Revisit}
\label{sec:revisit}

In this section, we recall the construction of the aforementioned $L_\infty$ structure associated
to a presymplectic manifold \cite{oh-park:coisotropic}. (See
\cite[Definition 2.3]{cattaneo-schatz}, \cite{LOTV}, for example.) Park and the second-named author
called it a \emph{strongly homotopy Lie algebroid} which is equivalent to the notion of
an \emph{$L_\infty$ space} in other literature.
 
\subsection{Curved $L_\infty$ spaces and tangent complexes}
\label{subsec:tangent-complexes}

In this subsection, we closely follow the exposition given by Amorim and Tu \cite{amorim-tu}
of the notion of \emph{curved} $L_\infty$ space introduced by Costello \cite{costello}. 

An $L_\infty$ space, denoted by $\mathbb M = (M,\mathfrak g)$ is given by a pair of $M$ a smooth
manifold and $\mathfrak g$ a curved $L_\infty$ algebra over the ring of smooth functions
$C^\infty(M)$, requiring that $\mathfrak g$ is of the form
$$
\mathfrak g = \mathfrak g_2 \oplus \cdots \oplus \mathfrak g_d
$$
where each $\mathfrak g_i$ is a vector bundle in degree $i$ on $M$ for some $d \geq 2$. After shifting by degree, this becomes
$$
\mathfrak g[1] = \mathfrak g[1]_1\oplus \cdots \oplus \mathfrak g[1]_{d-1}.
$$
Given an $L_\infty$ space $\M = (M,\mathfrak g)$, the curvature term $\mu_0 \in \mathfrak g_2$
is a section $\sigma$ of the bundle $\mathfrak g_2$. 
\emph{We equip $TY$ with a torsion free  flat connection
and $\mathfrak g$ with a flat connection on a neighborhood of the zero set $\mu_0^{-1}(0)$.}

At each zero point $p \in \mu_0^{-1}(0)$,  the 
tangent complex at $p$ is defined as the chain complex
\be\label{eq:tangent-complex}
T_{x_0} \M: = \left(T_{x_0} M  \stackrel{\nabla \mu_0|_{x_0}}{\longrightarrow} \mathfrak g_2|_{x_0} \stackrel{\mu_1|_{x_0}}{\longrightarrow} 
\mathfrak g_3|_{x_0} \stackrel{\mu_1|_{x_0}}{\longrightarrow} \mathfrak g_4|_{x_0}
\stackrel{\mu_1|_{x_0}}{\longrightarrow} \cdots\right)
\ee
This complex does not depend on the choice of aforementioned connection at the zero point $p \in \mu_0^{-1}(0)$.
In \cite{amorim-tu}, a homotopy theory of $L_\infty$ spaces is extensively developed to the level that
they proved a version of an inverse function theorem \cite[Theorem 1.1]{amorim-tu}.

\begin{defn}[Virtual dimension] Denote by $\chi(\mathfrak g_{x_0}, \mu_1|_{x_0})$ the Euler characteristic of
the complex $(\mathfrak g, \mu_1)$. Then we define the virtual dimension of $\M$ at $p$ to be
$$
\text{\rm vit. dim}_{x_0}: = \dim T_{x_0}M - \chi(\mathfrak g_{x_0}, \mu_1|_{x_0}).
$$
When $\text{\rm vit. dim}_{x_0}$ is constant over $p \in M$, then
we define $\text{\rm vir.dim} M$ to be the common dimension.
\end{defn}

\subsection{The case of presymplectic manifolds}

Let $(Y,\omega)$ be a presymplectic manifold and $\pi: E \rightarrow Y$ be 
the foliation cotangent bundle $E = T^*\CF_\omega \to Y$ of the null foliation of $\omega$.
We can equip a canonical symplectic structure on a neighborhood $U$ of the zero section
$o_E\cong Y$ of the form
$$
\widetilde \omega: = \pi^*\omega - d \theta_G
$$
by choosing a splitting
\be\label{eq:TM-splitting}
TY = G \oplus T\CF_\omega.
\ee
We denote by $\Pi_{\omega;G}$ the Poisson bivector field on $U$ associated to
the symplectic form $\widetilde \omega$.
Utilizing this decomposition, Park and the second-named author \cite{oh-park:coisotropic} 
associates an $L_\infty$ algebra 
with vanishing curvature $\mathfrak l_0 = 0$.

\begin{thm}[Theorem 9.4 \cite{oh-park:coisotropic}] 
\label{thm:oh-park-intro} For each given presymplectic manifold $(Y,\omega)$, we can 
canonically equip it with an $L_\infty$-structure on the graded complex
$$
\left( \bigoplus_* \Omega^*(\CF_\omega)[1], \{\mathfrak l_k^\omega\}_{k \geq 1}\right).
$$
We denote by $\mathfrak l^\omega[1]$ the corresponding  $L_\infty[1]$ algebra.
\end{thm}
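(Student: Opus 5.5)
The plan is to realize $\Omega^\bullet(\CF_\omega)$ as the $L_\infty$ algebra obtained by derived-bracket (Voronov) construction from the Poisson structure $\Pi_{\omega;G}$ on the neighborhood $U \subset E = T^*\CF_\omega$ of the zero section. The first step is to check that $\widetilde\omega = \pi^*\omega - d\theta_G$ is nondegenerate near $o_E$. At a point of the zero section, $T_yE = G_y \oplus T_y\CF \oplus E_y$. The form $\pi^*\omega$ pairs $G$ nondegenerately with itself. The term $d\theta_G$ restricted to $o_E$ pairs the fibre direction $E_y = T^*_y\CF$ with $T_y\CF$ via the canonical pairing, where $\theta_G$ is the Liouville form transported to $E$ by the splitting \eqref{eq:TM-splitting}. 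Hence $\widetilde\omega$ is symplectic on a neighborhood $U$, $o_E$ is coisotropic, and its characteristic foliation is $\CF_\omega$.

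The second step identifies the algebraic data. Polyvector fields on $U$ that are fibrewise polynomial,
$$
\Gamma\Bigl(\textstyle\bigwedge^\bullet T U\Bigr)^{\rm poly} \cong \Gamma\Bigl(\textstyle\bigwedge^\bullet (G\oplus T\CF)\otimes \mathrm{Sym}(E^*)\Bigr) \oplus \cdots,
$$
form a graded Lie algebra $L$ under the Schouten bracket. Let $P$ be the projection to sections of $\bigwedge^\bullet (T^*\CF)^* {}^*$ restricted to $o_E$, that is, the projection that kills $G$-components and Taylor-expands in the fibre variables to zeroth order. Under the splitting this lands in $\Omega^\bullet(\CF_\omega)$ (vertical vectors along $E$ correspond to $T^*\CF$ duals, i.e. foliation forms). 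The kernel $\ker P$ is a Lie subalgebra and $\operatorname{Im} P$ is abelian, because the zero section is coisotropic and the fibres are linear. Moreover $[\Pi,\Pi]=0$ with $\Pi \in \ker P$. Voronov's theorem on higher derived brackets then yields the $L_\infty[1]$ structure
$$
\mathfrak l_k(\xi_1,\dots,\xi_k) = P[\cdots[[\Pi,\xi_1],\xi_2],\dots,\xi_k]
$$
on $\operatorname{Im}P \cong \Omega^\bullet(\CF_\omega)[1]$. Equivalently, the structure is obtained by Taylor expanding $\Pi$ in the fibre coordinates: $\mathfrak l_1 = d_{\CF_\omega}$ comes from the linear part, and $\mathfrak l_0 = P\Pi = 0$ since $o_E$ is coisotropic.

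The third step checks that $\mathfrak l_1$ is indeed the foliation differential. In local Darboux-type coordinates $(q,p,y)$ adapted to the foliation (with $y$ the leaf coordinates and $p$ the fibre coordinates), one has $\Pi = \sum \partial_{y}\wedge\partial_{p} + O(p)\,\partial\wedge\partial$ plus $G$-terms. The bracket $[\Pi,\cdot]$ followed by $P$ acts as $d_\CF$. Canonicity, meaning independence from coordinates and dependence only on $(\omega,G)$, follows because every ingredient ($\theta_G$, $P$, $\Pi$) is defined intrinsically from the splitting.

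The main obstacle is verifying the hypotheses of Voronov's construction: that $P$ is a projection onto an \emph{abelian} subalgebra whose kernel is a subalgebra and satisfies $P[a,b]=P[Pa,b]+P[a,Pb]$. Fibrewise linearity of $E$ handles this, but the $G$-components require care, since $\Pi$ has mixed $G\wedge T\CF$ terms coming from the curvature of $G$ (the failure of $G$ to be integrable). The $G$-components should therefore be absorbed into $\ker P$ rather than into the image. I would first check this in adapted coordinates, and if needed restrict to fibrewise-polynomial, i.e. formal, sections so that the infinite sums defining $\mathfrak l_k$ are finite in each fibre degree.
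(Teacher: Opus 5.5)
Your proposal is correct and follows essentially the route the paper relies on. The paper does not reprove this theorem; it cites Oh--Park and adopts the Cattaneo--Felder/Cattaneo--Sch\"atz derived-bracket construction, namely Voronov's $V$-data with $\Delta$ the Poisson bivector of Gotay's form $\pi^*\omega - d\theta_G$ and abelian part $\Omega^\bullet(\CF_\omega)[1]$ (see Lemma~\ref{lem:strlinf} and Section~\ref{sec:gluing}), which is what you outline.

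One small correction: $P$ must kill every component along $TY=G\oplus T\CF_\omega$, not only the $G$-components, because it is just the canonical vertical projection along the zero section. Consequently $\ker P$ is a subalgebra for any submanifold, coisotropy is used only to get $P\Pi=0$, and the $G$-difficulty you anticipate does not arise. Likewise, each $\mathfrak l_k$ is a finite expression in fibre derivatives of $\Pi$ of order at most $k$ along $o_E$, so no restriction to fibrewise-polynomial sections is needed.
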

 We now add a natural set of \emph{coaugmentations} or a
\emph{curvatures} $\mathfrak l_0$ via the canonical interior product
$$
\Gamma(TY) \to \Omega^1(Y); \quad X \mapsto \omega
$$
and the splitting \eqref{eq:TM-splitting}. The following lemma is obvious by
definition of presymplectic manifold $(Y,\omega)$.

\begin{lem} The interior product map $v \mapsto v \intprod \omega|_p$ defines 
an isomorphism 
$$
N_p\CF_\omega\to N_p^*\CF_\omega.
$$
\end{lem}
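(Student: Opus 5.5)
The plan is pure linear algebra, carried out fiberwise at a fixed point $p \in Y$. Here $N_p\CF_\omega$ denotes the normal space $T_pY/T_p\CF_\omega$, and $N_p^*\CF_\omega$ its dual, i.e.\ the annihilator $(T_p\CF_\omega)^\perp \subset T_p^*Y$. By definition of the null foliation,
$$
T_p\CF_\omega = \ker \omega_p = \{ v \in T_pY \mid v \intprod \omega_p = 0\}.
$$
Since $(Y,\omega)$ is presymplectic, this kernel has constant rank, so $T\CF_\omega$, $N\CF_\omega$ and $N^*\CF_\omega$ are smooth vector bundles. It therefore suffices to prove the isomorphism at each $p$; smoothness in $p$ is then automatic.

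\textbf{Step 1 (the map lands in $N_p^*\CF_\omega$).} For $v \in T_pY$ and $w \in T_p\CF_\omega$, skew-symmetry gives
$$
(v\intprod\omega_p)(w) = \omega_p(v,w) = -\omega_p(w,v) = 0.
$$
So $v \intprod \omega_p$ annihilates $T_p\CF_\omega$, i.e.\ it lies in $N_p^*\CF_\omega$.

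\textbf{Step 2 (the map descends to the normal space).} If $v \in T_p\CF_\omega$, then $v\intprod\omega_p = 0$ by definition of the kernel. Hence the linear map $T_pY \to N_p^*\CF_\omega$ factors through the quotient $N_p\CF_\omega = T_pY/T_p\CF_\omega$.

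Equivalently, using the splitting \eqref{eq:TM-splitting}, one can identify $N_p\CF_\omega \cong G_p$ and restrict the map to $G_p$.

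\textbf{Step 3 (injectivity and dimension count).} Suppose $v\intprod \omega_p = 0$. Then $v \in \ker\omega_p = T_p\CF_\omega$, so $[v] = 0$ in $N_p\CF_\omega$; thus the induced map is injective. For the dimensions,
$$
\dim N_p\CF_\omega = \dim Y - \dim\ker\omega_p = \dim (T_p\CF_\omega)^\perp = \dim N_p^*\CF_\omega.
$$
An injective linear map between spaces of equal finite dimension is an isomorphism.

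Alternatively, one can argue via nondegeneracy: $\omega_p$ descends to a nondegenerate skew form on $T_pY/\ker\omega_p$, and the map in question is its musical isomorphism.

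No step is a real obstacle. The only point requiring care is to fix the convention identifying $N_p^*\CF_\omega$ with the annihilator of $T_p\CF_\omega$, and to note that constant rank is exactly what makes the fiberwise isomorphisms assemble into a bundle isomorphism.
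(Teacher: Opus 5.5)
Your proof is correct and follows the same route as the paper: $v \intprod \omega_p$ annihilates $T_p\CF_\omega = \ker\omega_p$, the induced map on $T_pY/\ker\omega_p$ is injective, and equal dimensions of $N_p\CF_\omega$ and $N_p^*\CF_\omega$ give the isomorphism. Your explicit appeal to skew-symmetry in Step 1 spells out what the paper leaves as ``by definition.''
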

\begin{proof}  Note that for any $v \in T_pY$, we have $v \intprod \omega|_{T_p\CF_\omega} = 0$
by definition of $\CF_\omega$ so that $T_p\CF_\omega= \ker \omega|_p$. Therefore 
$v \intprod \omega|_p \in N_p^*\CF_\omega$. This linear map is an isomorphism 
because it is injective on $T_pY / \ker\omega|_p$ and $N_p \CF_\omega$ and $N_p^* \CF_\omega$
have the same dimension.
\end{proof}

Note that $X \mapsto X \intprod \omega$ naturally defines a foliation differential
one-form in $\Omega^1(\CF_\omega)$ by restricting it to $T\CF_\omega$.

\begin{prop} Let $X$ be a vector field on $Y$ and define the 1-form $\eta_X$ by
\be\label{eq:etaX}
\eta_X: = X \intprod \omega.
\ee
We now put 
$$
\mathfrak l_0^X: = \eta_X|_{T\CF_\omega} \in \Omega^1(\CF_\omega).
$$
Then  $d_{\CF_\omega}(\mathfrak l_0^X) = 0$. In particular the pair $\Y^X: = (Y, \{\mathfrak l_k\}_{k=0}^\infty)$ 
with $\mathfrak l_0 = \mathfrak l_0^X$ defines a
curved $L_\infty$ space.
\end{prop}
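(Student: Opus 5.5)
The plan is to unwind the definition of $\mathfrak l_0^X$ against the definition of the null foliation, reduce closedness to the compatibility of $d_{\CF_\omega}$ with restriction, and then check the curved $L_\infty$ relations.

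First I will observe what $\mathfrak l_0^X$ actually is pointwise. By definition $T\CF_\omega = \ker \omega \subset TY$, as used in the proof of the preceding lemma. So for any $p \in Y$ and any $v \in T_p\CF_\omega$,
$$
\mathfrak l_0^X(v) = \eta_X(v) = \omega_p(X(p), v) = -\,\omega_p(v, X(p)) = -\,(v\intprod \omega_p)(X(p)) = 0.
$$
Hence $\eta_X|_{T\CF_\omega} \equiv 0$ as a section of $T^*\CF_\omega$. This holds for every vector field $X$, so $\mathfrak l_0^X = 0 \in \Omega^1(\CF_\omega)$, and in particular $d_{\CF_\omega}(\mathfrak l_0^X) = 0$.

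As a cross-check, and to make the argument robust to reading the restriction through the splitting \eqref{eq:TM-splitting}, I would also give the structural argument. The foliation differential is characterized by $d_{\CF_\omega}(\alpha|_{T\CF_\omega}) = (d\alpha)|_{T\CF_\omega}$ for $\alpha\in\Omega^*(Y)$, since $T\CF_\omega$ is involutive and the restriction map $\Omega^*(Y)\to\Omega^*(\CF_\omega)$ is a chain map. Cartan's formula together with $d\omega=0$ gives $d\eta_X = \CL_X\omega$. It would then remain to check that $(\CL_X\omega)|_{T\CF_\omega}=0$. This does not hold for arbitrary $X$, which is why the pointwise vanishing of the first paragraph is the decisive observation, not Cartan's formula. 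So I would present the pointwise computation as the proof and the chain-map remark only as a comment.

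Next I will verify that $(Y,\{\mathfrak l_k\}_{k\ge 0})$ with $\mathfrak l_0=\mathfrak l_0^X$ is a curved $L_\infty$ space. The curved $L_\infty[1]$ relations are obtained from those of Theorem \ref{thm:oh-park-intro} by inserting $\mathfrak l_0$ into every slot. The lowest relation reads $\mathfrak l_1(\mathfrak l_0)=0$, and $\mathfrak l_1 = d_{\CF_\omega}$ on $\Omega^*(\CF_\omega)$ by the construction of \cite{oh-park:coisotropic}, so this is exactly the closedness just proved. Since $\mathfrak l_0^X=0$, all the higher correction terms $\mathfrak l_{k+1}(\mathfrak l_0,x_1,\dots,x_k)$ vanish identically. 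The remaining relations are then precisely those of $\mathfrak l^\omega[1]$, which hold by Theorem \ref{thm:oh-park-intro}.

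The only real obstacle is interpretive rather than computational. The statement's proof is trivial once one notes that $T\CF_\omega=\ker\omega$, and I will say so explicitly. If the authors intended a non-trivial coaugmentation, for instance $\eta_X$ projected onto $G^*$ or paired with the normal directions, I would fall back on the Cartan-formula route. In that case the extra condition needed is that $X$ preserve $\omega$ modulo $N^*\CF_\omega$ along the leaves.
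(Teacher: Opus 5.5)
Your proof is correct and rests on the same fact as the paper's, namely $\eta_X(v)=\omega(X,v)=0$ for $v\in T\CF_\omega=\ker\omega$. The paper feeds this into the three-term formula for $d\eta_X(X_1,X_2)$, while you conclude directly that $\mathfrak l_0^X\equiv 0$, which is sharper: it shows the curvature is in fact trivial (contrary to the paper's later remark that a general $X$ gives non-zero curvature), and it also disposes of the higher curved relations involving $\mathfrak l_{k+1}(\mathfrak l_0,\dots)$, which the paper's check of $\mathfrak l_1\circ\mathfrak l_0=0$ alone does not address.

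Your side remark, however, is wrong: $(\CL_X\omega)|_{T\CF_\omega}=0$ holds for every $X$. For sections $v,w$ of $\ker\omega$ one has $(\CL_X\omega)(v,w)=X[\omega(v,w)]-\omega([X,v],w)-\omega(v,[X,w])=0$; equivalently, it equals $d_{\CF_\omega}(\eta_X|_{T\CF_\omega})=0$ by your own chain-map observation. So the Cartan route works unconditionally, and the fallback hypothesis on $X$ in your last paragraph is unnecessary.
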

\begin{proof} We evaluate the differential form $\eta_X$ against $X_1, \, X_2 \in \Gamma(T\CF_\omega)$. 
We compute
$$
d\eta_X(X_1,X_2) = X_1[\eta_X(X_2)] - X_2[\eta_X(X_1)] - \eta_X([X_1,X_2]).
$$
Since $X_i \in \ker \omega$, $\eta_X(X_i) = \omega(X,X_i) = 0$. By the integrability of $\ker \omega$,
we also have $[X_1,X_2] \in \ker \omega$ and hence the last term also vanishes. 
This proves $d_{\CF_\omega}(\mathfrak l_0(X)) = 0$ and hence $\mathfrak l_1 \circ \mathfrak l_0 = 0$
which finishes the proof.
\end{proof}

For a general vector field $X$, 
we obtain a deformed $L_\infty$ space denoted by
 $$
\Y^{\omega;X} = (Y^{\omega;X}, \mathfrak l^{\omega;X})
 $$
 which has \emph{non-zero} curvature. At each point $p \in (\mathfrak l_0^X)^{-1}(0)$,
 we have the tangent complex at $p$
\be\label{eq:TpYY}
T_p\Y^{\omega;X} = T_pY \stackrel{\nabla \mathfrak l_0^X|_p}{\longrightarrow}
\left(\Omega^1(\CF_\omega) \stackrel{\mathfrak l_1^\omega}{\longrightarrow}
\Omega^2(\CF_\omega) \stackrel{\mathfrak l_1^\omega}{\longrightarrow} \Omega^3(\CF_\omega)
\stackrel{\mathfrak l_1^\omega}{\longrightarrow} \cdots \right)\Big|_p
\ee
with $\mathfrak l_1^\omega = d_{\CF_\omega}$, whose virtual dimension is given 
$$
\text{\rm vir.dim } T\Y^{\omega;X}|_p = \dim Y - \chi\left(H^*(\Omega^*(\CF_\omega), d_{\CF_\omega})\right), 
$$
Here $(\cdot)|_p$ is the stalk at $p$ of the de-Rham complex.

\begin{rem} The diagram \eqref{eq:TpYY} looks slightly different from the general abstract
definition of $L_\infty$ spaces given in \eqref{eq:tangent-complex}. The $L_\infty$ space relevant to 
this foliation de Rham complex is explained in \cite[Section 9]{oh-park:coisotropic} in the formal
level in the language of super-manifold through Gotay's coisotropic embedding of $(Y,\omega)$
into $U \subset T^*\CF_\omega$.
\end{rem}

\subsection{Formal deformation of coisotropic submanifolds}

We consider a section $\sigma : Y \to T^*\CF_\omega$ whose
image is contained in Gotay's neighborhood $U \subset T^*\CF_\omega$
ask what the condition for its image to be coisotropic with respect to 
the symplectic form $\widetilde \omega = \pi^*\omega - d\theta_G$ is.

\begin{defn}\label{defn:smcoiso}
A smooth one parameter family of smooth sections of $T^*\CF_\omega \to Y$ starting from the zero section is a \emph{smooth coisotropic deformation of} $S$ if each section in the family is coisotropic. A section $\sigma$ 
of $T^*\CF_\omega \to Y$ is an \emph{infinitesimal coisotropic deformation of} $S$ if $\epsilon s$ is a coisotropic section up to infinitesimals $O(\epsilon^2)$, where $\epsilon$ is a formal parameter.
\end{defn}

Recall that  a formal series $\sigma (\epsilon) = \sum _{k =0} ^ \infty \epsilon ^k \Gamma_k \in \Omega^1(\CF_\omega)
 [[ \epsilon]]$,
 $\Gamma_k \in  \Omega^1(\CF_\omega)$, such that $\Gamma_0 = 0$,
 is called a \emph{formal deformation} of coisotropic submanifold $Y = o_{T^*\CF_\omega}$.
 Theorem \ref{thm:oh-park} provides the criterion for 
$$
 \sigma(\epsilon) := \sum_{k=0}^\infty \epsilon^k \Gamma_k
 $$
 to be a formal coisotropic deformation where where $\Gamma_k$'s are sections of $T^*\CF_\omega$ and $\varepsilon$ is a formal parameter. The following formal definition is borrowed from 
 \cite[p.1080]{LOTV}.
 
 \begin{defn}[Maurer-Cartan series] Let $\sigma$ be a section of $T^*\CF_\omega$.
 The \emph{Maurer-Cartan series} of $\sigma$ is the series
 \be\label{eq:maurer-cartan}
 \operatorname{MC}(\sigma): = \sum_{k=0} \frac1{k!} \mathfrak l_K(\sigma,\cdots ,\sigma).
 \ee
 \end{defn}

Then the following description of Maurer-Cartan moduli space
is a translation of the result given  in \cite[Theorem 11.1]{oh-park:coisotropic}  for the deformation 
problem of coisotropic submanifolds modulo Hamiltonian equivalence.

\begin{thm}[Theorem 11.1 \cite{oh-park:coisotropic}] \label{thm:oh-park}
Let $\sigma \in \mathfrak{l}[1]^0 = \mathfrak{l}^1 = \Omega^1(\CF_\omega)$. Then 
$\image \sigma \subset U \subset T^*\CF_\omega$ is 
a coisotropic submanifold of  $\widetilde \omega$  (in the formal level)  if and only if
$$
\operatorname{MC}(\sigma) = 0
$$
for the formal power series
$\sigma = \sum_{k = 0}^\infty \epsilon^k \Gamma_k$.
\end{thm}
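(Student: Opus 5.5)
The statement is an equivalence between coisotropicity of the graph of $\sigma$ (formally in $\epsilon$) and the vanishing of the Maurer--Cartan series. I would first reduce coisotropicity of $\operatorname{Graph}\sigma \subset U$ to a Poisson-geometric criterion. A submanifold $C$ is coisotropic for $\widetilde\omega$ if and only if its vanishing ideal is closed under the Poisson bracket of $\Pi_{\omega;G}$. Equivalently, locally $\Pi_{\omega;G}(d f_1, d f_2)|_C = 0$ for all defining functions $f_i$ of $C$. For the graph of a section of $T^*\CF_\omega$, the natural defining functions are the fiber coordinates shifted by $\sigma$: in a foliation chart $(y,q)$ on $Y$ with $q$ the leaf coordinates and fiber coordinates $p_i$ dual to $dq^i$, take $f_i = p_i - \sigma_i(y,q)$.

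The next step is to write $\Pi_{\omega;G}$ in these adapted coordinates, using Gotay's normal form $\widetilde\omega = \pi^*\omega - d\theta_G$. The splitting $TY = G\oplus T\CF_\omega$ yields a decomposition of $\Pi$ into three pieces:
\begin{itemize}
\item a fiber--leaf pairing $\partial_{q^i}\wedge\partial_{p_i}$;
\item a transverse part coming from the inverse of $\omega|_G$, corrected by the curvature of $G$;
\item terms polynomial in $p$ coming from $d\theta_G$.
\end{itemize}
I then expand $\{f_i,f_j\}|_{p=\sigma}$ as a power series in $\sigma$ and its leafwise and transverse derivatives. The linear term should be exactly $d_{\CF_\omega}\sigma$. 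The quadratic and higher terms should be multilinear, graded-symmetric expressions in $\sigma$. One identifies these with $\frac1{k!}\mathfrak l_k(\sigma,\dots,\sigma)$, since by the construction recalled in Theorem \ref{thm:oh-park-intro} the brackets $\mathfrak l_k^\omega$ are precisely the derived brackets obtained from $\Pi_{\omega;G}$ via Voronov-type projection onto $\Omega^*(\CF_\omega)$. That is, $\mathfrak l_k$ is the symmetrized $k$-th Taylor coefficient in the fiber direction of $\Pi$, contracted with the arguments.

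Having identified the pieces, I would establish the equivalence
$$
\Pi_{\omega;G}(df_i,df_j)|_{\operatorname{Graph}\sigma}=0 \quad\text{for all } i,j \iff \operatorname{MC}(\sigma)=0 \ \text{in } \Omega^2(\CF_\omega).
$$
The $(i,j)$ bracket is the $dq^i\wedge dq^j$ component of the foliation 2-form $\operatorname{MC}(\sigma)$. This works because $\Pi$ is polynomial in $p$ along the fibers when $\theta_G$ is linear in $p$, so the Taylor expansion terminates or converges formally. For a formal series $\sigma=\sum_k\epsilon^k\Gamma_k$ with $\Gamma_0=0$, each $\epsilon$-order of $\operatorname{MC}(\sigma)$ involves only finitely many terms. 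Hence the identity holds order by order in $\epsilon$, which is what ``in the formal level'' requires. Independence of the chosen foliation chart follows because both sides are tensorial in $\sigma$ once the splitting $G$ is fixed.

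The main obstacle is the bookkeeping identification of the higher Taylor terms of $\Pi_{\omega;G}$ with the brackets $\mathfrak l_k$, including the combinatorial factors $1/k!$. It is also delicate to check that the transverse $G$-part, which involves the inverse of $\omega|_G$ and the curvature of the splitting, contributes exactly the nonlinear terms. I would first verify this through order two, where $\mathfrak l_2$ is the derived bracket of two foliation 1-forms. For general $k$, I would then invoke the derived-bracket formalism of \cite{oh-park:coisotropic}, where the $L_\infty$ relations are themselves deduced from $[\Pi,\Pi]=0$. This makes the identification automatic once the MC series is recognized as $e^{-\sigma}\Pi e^{\sigma}$ projected to the graph.
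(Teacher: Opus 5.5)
The paper gives no proof of this statement; it quotes it from Oh--Park, who derive the deformation equation for graphs explicitly in Gotay's normal form and then identify it with the Maurer--Cartan equation of their brackets. Your route is different: it is the derived-bracket argument of Cattaneo--Felder and Cattaneo--Sch\"atz, which is also how the paper describes $\mathfrak l^\omega$ in Part II via $V$-data. It is sound.

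The ``bookkeeping obstacle'' you anticipate disappears once your last sentence is made precise. Set $\tau_{-\sigma}(x,p)=(x,p-\sigma(x))$. Then $\tau_{-\sigma}^*p_i=f_i$, so exactly $\Pi(df_i,df_j)|_{p=\sigma}=\big((\tau_{-\sigma})_*\Pi\big)(dp_i,dp_j)|_{p=0}$. Hence $\operatorname{Graph}\sigma$ is coisotropic iff the $\wedge^2T^*\mathcal F_\omega$-component of $(\tau_{-\sigma})_*\Pi$ vanishes along the zero section. Formally, $(\tau_{-\sigma})_*\Pi=\exp(\mathcal L_{\sigma^v})\Pi$, where $\sigma^v$ is the fiberwise constant vertical vector field. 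That component is therefore $\sum_k\frac1{k!}\mathfrak l_k(\sigma,\dots,\sigma)$, up to the sign convention $\sigma\leftrightarrow-\sigma$.

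This handles the $G$-block, the factors $1/k!$ and the derivatives of $\sigma$ all at once, so no separate order-two check is needed. The derivatives come from the $d\sigma_i$ in $df_i$. Already $\mathfrak l_1=d_{\mathcal F_\omega}$ arises entirely from these terms, so describing $\mathfrak l_k$ as a Taylor coefficient of $\Pi$ contracted with the arguments is not accurate. The one external input is that the $\mathfrak l^\omega_k$ are the Voronov derived brackets of $\Pi_{\omega;G}$. If you start from Oh--Park's original brackets, you need the Cattaneo--Sch\"atz comparison the paper cites. Oh--Park's direct derivation avoids that input, at the price of explicit coordinate bookkeeping.

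One claim must be dropped: $\Pi_{\omega;G}$ is \emph{not} polynomial in $p$. The form $\widetilde\omega$ is affine in $p$. But in a frame $(e_a,\partial_{q^i},\partial_{p_i})$ with $e_a$ spanning $G$, the $G$-block of its inverse is $(\omega|_G+\langle p,F_G\rangle)^{-1}$, where $F_G$ is the curvature of the splitting. This is a genuine Neumann series in $p$ unless $G$ is integrable. That is why infinitely many $\mathfrak l_k$ are nonzero in general, and why convergence for honest sections is a separate issue.

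The error does not affect the formal statement. Since $\Gamma_0=0$, the $\epsilon^n$-coefficient of $\Pi(df_i,df_j)|_{p=\sigma(\epsilon)}$ involves only Taylor coefficients of $\Pi$ at $p=0$ of order at most $n$, as you also observe. Similarly, chart independence holds because both sides are globally defined: coisotropy is intrinsic and $\operatorname{MC}(\sigma)\in\Omega^2(\mathcal F_\omega)$ is global. It does not come from tensoriality, since the $\mathfrak l_k$ are differential operators.
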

The statement of this theorem can be promoted to the \emph{fiberwise entire} cases
so that the above formal power series actually converges, 
as studied in  \cite{schatz-zambon:convergent}, \cite[Theorem 4.23 \& Corollary 4.24]{LOTV},
\cite{schatz-zambon:equivalence}.
It is known from \cite{zambon} that the above moduli problem is
obstructed in general, and the relevant obstruction lies in $H^2$
of the foliation de Rham cohomology of the relevant null-foliation $\CF_\omega$ of the
presymplectic manifold $(Y,\omega)$ \cite{oh-park:coisotropic} . 

\begin{rem}One may regard the above defined virtual dimension 
as the actual dimension of the moduli space of coisotropic submanifolds when 
the problem is obstructed. It would be an interesting problem to precisely describe 
this moduli space in a concretely given presymplectic manifold such as the one 
described in \cite[Section 12]{oh-park:coisotropic} or in the toric case \cite{weidong-ruan}.
\end{rem}

\part{Presymplectic stratification of a generic closed two-form}

In this part, we consider any closed two-form $\omega$ on a closed manifold $M$.
We prove a generic transversality result which reads that there exists a residual subset
of $\omega$'s which defines a Whitney stratification of presymplectic submanifolds.
We do this first by considering the canonical stratification of the set of
skew-symmetric bilinear forms on $\R^N$ and deriving the precise dimension formulae 
for the strata of  the bilinear forms in terms of  the dimension of rank (or equivalently the nullity) thereof,
and then applying to Sard-Smale theorem by considering a two form $\omega$ as 
a section of skew-symmetric bilinear forms over $M$,
\be\label{eq:quad-bundle}
\Lambda^2(M): = \Lambda^2(T^*M) \to M
\ee
whose fiber is the set $\Lambda^2(T_p^*M),\, p \in M$ consisting of 
skew-symmetric bilinear forms in $T_pM \cong \R^N$.

\section{Stratification of the set of skew-symmetric bilinear forms}

Let $N > 0$ be a positive integer and consider the set 
$$
\Lambda^2(V) =: S
$$
consisting of $N \times N$ skew-symmetric bilinear forms $Q$ on 
the $N$-dimensional vector space. We define its \emph{kernel} by
\be\label{kernel}
\ker Q: = \{ v \in V \mid Q(v, w) = 0\, \,  \forall \, w \in V\}
\ee
and call its dimension the \emph{nullity} of $Q$.
The set admits a decomposition
\be\label{eq:decomposition}
\bigcup_{k =0, \cdots N} S_m; \quad S_m: = \Lambda^2_m(V)
\ee
where $ \Lambda^2_m(V)$ is the subset of $\Lambda^2(V)$ defined by
$$
\Lambda^2_m(V): = \{ Q \in \Lambda^2(V) \mid \text{\rm nullity}(Q) = m\}.
$$
We denote by $S_Q$ the stratum $S_m$ containing $Q$. It follows from the upper semi-continuity of
the nullity that the sub-union
$$
\bigcup_{k \geq \ell} S_m \subset S
$$
is a closed subset of $S$ for each integer $\ell \geq 0$.  We denote by $\CS$
the set of strata of $\Lambda^2(V)$.
We note 
$$
\bigcup_{k \geq N+1} S_m = \emptyset.
$$

The following is an immediate consequence of the properties of general Whitney stratification.

\begin{lem}\label{lem:Whitney} Let $V$ be a vector space and 
consider the set $\Lambda^2(V)$. Let $<$ be the partial order between the strata of 
the decomposition \eqref{eq:decomposition}.  Then it defines a Whitney stratification.
\end{lem}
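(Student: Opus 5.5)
We will prove the lemma by exhibiting the decomposition \eqref{eq:decomposition} as the orbit decomposition of an algebraic group action, and then invoking the standard fact that the orbit stratification of a linear algebraic group acting on a vector space with finitely many orbits is Whitney.

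\emph{Step 1: the strata are orbits.} Let $GL(V)$ act on $\Lambda^2(V)$ by $g\cdot Q = Q(g^{-1}\cdot, g^{-1}\cdot)$. By the linear-algebra normal form for skew forms, every $Q$ of nullity $m$ has a basis in which it reads $\sum_{i=1}^{r} e_i^*\wedge f_i^*$ with $2r = N-m$. Hence each $S_m$ is a single $GL(V)$-orbit, and $S_m \ne \emptyset$ exactly when $N-m$ is even. In particular each $S_m$ is a smooth, connected, semi-algebraic (indeed algebraic, locally closed) submanifold. Its dimension is $\dim GL(V) - \dim \operatorname{Stab}(Q)$, and one checks that
$$
\dim S_m = \binom{N}{2} - \binom{m}{2}.
$$
This comes from parametrizing $S_m$ by the choice of kernel $K \in Gr(m,V)$ together with a nondegenerate form on $V/K$. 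That gives $m(N-m) + \binom{N-m}{2}$, which equals the expression above.

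\emph{Step 2: frontier condition and partial order.} The set $\bigcup_{k\ge m} S_k$ is the closed algebraic set where all Pfaffians of size $> N-m$ vanish. Moreover $S_{m'}\subset \overline{S_m}$ for $m'\ge m$, since a rank-$2r'$ form can be approximated by adding $\varepsilon\sum e_i^*\wedge f_i^*$ on a complement of appropriate size. Thus the partial order $<$ is given by $S_{m'} < S_m$ iff $m' > m$, and the frontier condition holds.

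\emph{Step 3: Whitney (b).} This is the main point. Whitney's condition is generic: for any pair of strata $X \subset \overline{Y}$, the set of points of $X$ where $(b)$ fails is contained in a semi-algebraic subset of $X$ of strictly smaller dimension. This is Whitney's theorem, cf.\ the Wall/Mather treatment. On the other hand, the failure set is $GL(V)$-invariant, because the group acts by linear isomorphisms preserving both strata, and condition $(b)$ is preserved by diffeomorphisms. Since $X$ is a single orbit, an invariant subset of $X$ is either empty or all of $X$. It cannot be all of $X$ because its dimension is smaller, so it is empty. Hence $(b)$ (and therefore $(a)$) holds for every pair of strata.

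The only genuinely nontrivial input is the genericity of Whitney regularity for semi-algebraic pairs. We quote it rather than reprove it; if a self-contained argument is preferred, one can alternatively verify $(b)$ directly from the local product structure near $Q_0 \in S_{m'}$. Via the slice $Q_0 + \Lambda^2(\ker Q_0)$, a neighborhood of $Q_0$ is diffeomorphic to $S_{m'}\times \Lambda^2(\R^{m'})$, with strata corresponding to $S_{m'}\times (\text{cone } \Lambda^2_m(\R^{m'}))$. Whitney $(b)$ along $S_{m'}\times\{0\}$ then reduces to the fact that each stratum of $\Lambda^2(\R^{m'})$ is a cone with vertex $0$.
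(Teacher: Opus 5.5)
Your proof is correct, but it takes a different route from the paper's. The paper's proof is a one-line appeal to general theory. It notes that the strata $S_m$ are subanalytic and that the inclusions of unions of strata are subanalytic maps, then cites Mather and Goresky--MacPherson. Taken literally, subanalyticity only gives that Whitney's condition (b) fails on a subanalytic subset of strictly smaller dimension of the lower stratum; this is what produces Whitney \emph{refinements} in general. It does not by itself show that the nullity decomposition is Whitney as it stands. Your Step 3 supplies exactly the missing ingredient. Each stratum is a single $GL(V)$-orbit, and the bad set is $GL(V)$-invariant because linear automorphisms preserve both strata and condition (b). A lower-dimensional invariant subset of an orbit must therefore be empty. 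Your alternative slice argument via $Q_0+\Lambda^2(\ker Q_0)$ is even more elementary and gives more. It yields a local product structure $S_{m'}\times(\text{cone})$ near every point, i.e.\ local conical triviality, from which (b) is immediate because radial directions are tangent to conical strata. Your dimension count agrees with the paper's formula $\dim \Lambda^2_m(V)=\tfrac{(N-m)(N+m-1)}{2}$. One small inaccuracy, harmless for the argument: for $N$ even, $S_0$ is not connected, since the sign of the Pfaffian splits it into two $GL^+(V)$-orbits. Connectedness is never used, and $S_0$ is the open stratum, so it never appears as the lower stratum of a pair.
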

\begin{proof} Note that each stratum is an subanalytic subset, the inclusion maps
$$
\bigcup_{k \in \CS_1} S_m \hookrightarrow \bigcup_{k \in \CS_2} S_m
$$
are subanalytic maps for any $\CS_1 \subset \CS_2 \subset \CS$.
The lemma immediately follows from these observations.
(See \cite[Section 4]{mather:mappings}, \cite[Section 1.1 \& 1.2]{goresky-macpherson} for details.)
\end{proof}

Now we compute the dimensions of the strata explicitly.
We first recall that when the bilinear form $Q$ is nondegenerate $\dim V = 2n$ for some $n$ and that it is well-known
\be\label{eq:dim-Qdsk'}
\dim \Lambda(V) = \frac{2n(2n-1)}{2} = n(2n-1)
\ee
We also fix an auxiliary Euclidean inner product on $V$ and denote by $W^{\perp_2}$ the orthogonal
complement of $W$ in $V$ with respect to the inner product. On the other hand, we denote by $W^\perp \subset V^*$
the annihilator of $W$.

Let $m$ be the nullity of $Q$. Then we have the decomposition
\be\label{eq:V-decompose}
V = N_Q \oplus N_Q^{\perp_2}, \quad Q = 0 \oplus Q'
\ee
where  $Q'$ is a nondegenerate skew symmetric bilinear  form on $N^\perp$. We denote by $Q^\pi$
the projection of $Q$ to $N_Q$.

Therefore we have the following
lemma.

\begin{lem} Suppose $\dim V = N$ and let $Q \in \Lambda^2_m(V)$ with nullity $m$. Then we have
$$
\dim \Lambda^2_m(V) = \dim \operatorname{Gr}(\R^N,\R^m) + \frac{(N-m)(N-m-1)}{2}.
$$
\end{lem}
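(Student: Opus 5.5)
We realize $\Lambda^2_m(V)$ as the total space of a fiber bundle over the Grassmannian of $m$-planes in $V$ and read off its dimension. Throughout, $N-m$ must be even, since a nondegenerate skew form lives only on an even dimensional space; otherwise $\Lambda^2_m(V)=\emptyset$ and there is nothing to prove.

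First, consider the kernel map
$$
\kappa: \Lambda^2_m(V) \to \operatorname{Gr}(\R^N,\R^m), \quad Q \mapsto N_Q = \ker Q.
$$
By the decomposition \eqref{eq:V-decompose}, every $Q$ with kernel $N_Q = W$ is the pull-back of a nondegenerate form on $V/W$. Conversely, a nondegenerate form on $V/W$ pulls back to a form with kernel exactly $W$. Hence the fiber is
$$
\kappa^{-1}(W) \cong \{\, Q' \in \Lambda^2((V/W)^*) \mid Q' \text{ nondegenerate}\,\},
$$
which we may also identify, via the auxiliary inner product, with the nondegenerate forms on $W^{\perp_2}$. This is a nonempty open subset of the vector space $\Lambda^2(\R^{N-m})$, of dimension $\frac{(N-m)(N-m-1)}{2}$ by \eqref{eq:dim-Qdsk'} applied with $2n = N-m$.

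Second, show that $\kappa$ is a smooth locally trivial fibration, so that $\Lambda^2_m(V)$ is a smooth submanifold of dimension equal to the base dimension plus the fiber dimension.

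1. Near a fixed $W_0 \in \operatorname{Gr}(\R^N,\R^m)$, use the standard chart $W = \operatorname{graph}(A)$ with $A \in \operatorname{Hom}(W_0, W_0^{\perp_2})$.
2. Use the linear isomorphism $g_A = \id + A$ on $W_0$ and $\id$ on $W_0^{\perp_2}$. It maps $W_0$ onto $W$ and depends smoothly on $A$.
3. Define the chart by $(A, Q_0) \mapsto (g_A^{-1})^* Q_0$, where $Q_0$ ranges over the forms with kernel $W_0$. This gives a diffeomorphism of (chart) $\times$ (fiber over $W_0$) onto $\kappa^{-1}(\text{chart})$.

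Equivalently, $GL(V)$ acts transitively on $\Lambda^2_m(V)$, since any two skew forms of the same rank are congruent. So $\Lambda^2_m(V)$ is the homogeneous manifold $GL(N)/\operatorname{Stab}(Q)$, and $\kappa$ is $GL(N)$-equivariant onto the homogeneous space $\operatorname{Gr}(\R^N,\R^m)$.

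Adding the dimensions gives
$$
\dim \Lambda^2_m(V) = \dim \operatorname{Gr}(\R^N,\R^m) + \frac{(N-m)(N-m-1)}{2},
$$
with $\dim \operatorname{Gr} = m(N-m)$.

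As a cross-check, compute the stabilizer. A matrix preserving $Q = 0 \oplus \Omega_0$ has block form $\begin{pmatrix} a & b\\ 0 & s\end{pmatrix}$ with $a \in GL(m)$, $b$ arbitrary and $s \in Sp(N-m)$, where the zero block records that the kernel must be preserved. So $\dim \operatorname{Stab}(Q) = m^2 + m(N-m) + \frac{(N-m)(N-m+1)}{2}$. Then
$$
N^2 - \dim \operatorname{Stab}(Q) = m(N-m) + \frac{(N-m)(N-m-1)}{2},
$$
which agrees with the bundle count.

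The main obstacle is the second step: showing that $\Lambda^2_m(V)$ really is a submanifold and that $\kappa$ is a submersion. Counting dimensions alone does not give this. I would handle it through the orbit description, since orbits of a Lie group action are immersed submanifolds, and this orbit is a semialgebraic set, hence embedded. The explicit local trivialization above then serves as an alternative route to the same conclusion.
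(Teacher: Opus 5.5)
Your argument is correct and uses the same count as the paper's proof: the dimension $m(N-m)$ of the choice of the kernel $N_Q$ in the Grassmannian, plus the dimension of the open set of nondegenerate forms on $N_Q^{\perp_2}$. The paper takes for granted that $\Lambda^2_m(V)$ is a submanifold fibering over the Grassmannian, and your local trivialization (or the orbit description) supplies this. Two small notes: in the trivialization, the inverse is smooth because both the graph chart and the fiber coordinate can be read off from the blocks of $Q$ relative to $W_0 \oplus W_0^{\perp_2}$; and for embeddedness of the orbit it is cleaner to cite that $\Lambda^2_m(V)$ is locally closed, by semicontinuity of rank, than to appeal to semialgebraicity.
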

\begin{proof} The first factor arises from the choice of the null space $N_Q$ in $V$ and the second
factor follows from \eqref{eq:dim-Qdsk'} applied to $N_Q^{\perp_2}$.
\end{proof}
Recall 
$$
\dim \operatorname{Gr}(\R^N,\R^m) = \frac{N(N-1)}{2} - \left(\frac{m(m-1)}{2} +
\frac{(N-m)(N-m-1)}{2}\right) = m(N-m).
$$
We summarize the above discussion into the following.

\begin{prop} Let $m$ be the nullity and write $N = m + 2\ell$ for $\ell$ with $0 \leq \ell \leq \frac{[N]}{2}$.
Then we have
\be\label{eq:dimLambda2m}
\dim \Lambda^2_m(V) = 
m(N-m) + \frac{(N-m)(N-m-1)}{2} =  \frac{ (N-m)(N+m-1)}{2} 
\ee  
and  
 \be
 \text{\rm codim}\,  \Lambda^2_m(V)  =  \frac{m(m-1)}{2}
 \label{eq:codim-m} 
 \ee
for all $m$.
 \end{prop}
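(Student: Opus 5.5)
We compute the dimension of $\Lambda^2_m(V)$ by exhibiting it as the total space of a fiber bundle, then subtract from $\dim \Lambda^2(V)$ to get the codimension. Throughout, $m \equiv N \pmod 2$, since a skew form has even rank; for other $m$ the stratum is empty and the formula is vacuous.

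\textbf{Step 1: the fibration.} Consider the map
$$
\Lambda^2_m(V) \to \operatorname{Gr}(\R^N,\R^m), \qquad Q \mapsto \ker Q .
$$
It is smooth and $GL(V)$-equivariant, and $GL(V)$ acts transitively on both sides: any two skew forms of the same rank are congruent by the linear Darboux theorem. So $\Lambda^2_m(V)$ is a single smooth homogeneous orbit, and the map is a fiber bundle.

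\textbf{Step 2: the fiber.} Over $K \in \operatorname{Gr}(\R^N,\R^m)$, the fiber consists of the skew forms that descend to nondegenerate forms on $V/K \cong K^{\perp_2}$. This is an open subset of $\Lambda^2((V/K)^*)$. That space has dimension $\frac{(N-m)(N-m-1)}{2}$ by \eqref{eq:dim-Qdsk'} applied with $2n = N-m$, so the fiber is open and nonempty there. This is exactly the decomposition \eqref{eq:V-decompose}.

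\textbf{Step 3: the dimension.} Adding the base dimension $m(N-m)$, recalled just before the statement, gives
$$
m(N-m) + \frac{(N-m)(N-m-1)}{2} = \frac{(N-m)\bigl(2m + N - m - 1\bigr)}{2} = \frac{(N-m)(N+m-1)}{2},
$$
which is \eqref{eq:dimLambda2m}.

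\textbf{Step 4: the codimension.} Since $\dim \Lambda^2(V) = \frac{N(N-1)}{2}$ and $(N-m)(N+m-1) = N^2 - N - m^2 + m$, we get
$$
\frac{N(N-1)}{2} - \frac{(N-m)(N+m-1)}{2} = \frac{m^2 - m}{2} = \frac{m(m-1)}{2},
$$
which is \eqref{eq:codim-m}.

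\textbf{Main obstacle.} The only substantive point is the regularity in Step 1: that $\Lambda^2_m(V)$ is a submanifold and that the kernel map is a genuine fibration, so that dimensions add. The orbit description settles this, and it is consistent with Lemma \ref{lem:Whitney}.

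As an independent check, note that locally near $Q_0 = 0 \oplus Q'$ with $Q'$ invertible, write $Q = \begin{pmatrix} A & B \\ -B^T & C\end{pmatrix}$ with $C$ near $Q'$. Then $Q$ has nullity $m$ exactly when the Schur complement $A + B C^{-1} B^T$ vanishes. That is $\frac{m(m-1)}{2}$ independent equations, cut out transversally because $A$ appears linearly, which gives the codimension directly.
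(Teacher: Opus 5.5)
Your proof is correct and follows the paper's argument: the paper also takes $\dim \operatorname{Gr}(\R^N,\R^m)$ for the choice of kernel $N_Q$, adds $\frac{(N-m)(N-m-1)}{2}$ for a nondegenerate form on $N_Q^{\perp_2}\cong V/N_Q$, and gets the codimension by subtracting from $\frac{N(N-1)}{2}$. Your $GL(V)$-orbit argument that the kernel map is a genuine fibration, and your Schur-complement check, supply rigor the paper leaves implicit without changing the route.
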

 One can check when $m = 0$ which corresponds to the open stratum.
On the other hand, it is interesting to see the codimension of the next stratum.

\begin{exm}\label{exm:m-constraint} We examine a few cases of $(m,\ell)$ with $m + 2\ell \leq N$
so that $0 \leq \ell \leq [N/2]$.
\begin{enumerate}
\item
When $m=0$, $N$ must be even, i.e., $N = 2k$ for some integer $k\geq 0$. Then $\ell = k$. 
\item When $m = 1$, $N$ must be odd and $N = 2k+1$ for some integer $k \geq 0$. Then $\ell = k$
and $\codim \Lambda^{2k+1}_1(V) = 0$.
\item If $m = 2$, then we have
$$
\codim  \Lambda^2_m(V) = 1.
$$
\item More generally  $\Lambda^2_m(V)$ has its codimension  given by  $\frac{m(m-1)}{2}$. \end{enumerate}
 \end{exm}

\begin{cor} The stratum next to the open strata has codimension $1$ and the rest of strata 
has  codimension greater than 2. 
\end{cor}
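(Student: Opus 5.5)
The plan is to read the corollary off the codimension formula \eqref{eq:codim-m}, $\operatorname{codim}\Lambda^2_m(V) = \frac{m(m-1)}{2}$, together with a parity constraint on the nullity.

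\emph{Parity.} First I record the parity restriction already visible in Example \ref{exm:m-constraint}. By the decomposition \eqref{eq:V-decompose}, $Q = 0\oplus Q'$ with $Q'$ nondegenerate on $N_Q^{\perp_2}$. Hence $N - m = 2\ell$ is even, so only nullities $m \equiv N \pmod 2$ occur and all other $\Lambda^2_m(V)$ are empty. The nonempty strata are therefore indexed by $m = m_0, m_0+2, m_0+4,\dots$, where $m_0 \in\{0,1\}$ is the parity of $N$. Since $\frac{m(m-1)}{2}$ is strictly increasing for $m\ge 1$, the partial order $<$ of Lemma \ref{lem:Whitney} is totally ordered by nullity, with increasing codimension. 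The open stratum is $m=m_0$, which has codimension $0$ in both cases, and ``the stratum next to the open stratum'' is $m = m_0+2$.

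\emph{Cases.} Next I plug into \eqref{eq:codim-m}.
\begin{itemize}
\item For $N$ even: $m=0$ gives codimension $0$; $m=2$ gives codimension $1$; $m\ge 4$ gives codimension at least $6>2$.
\item For $N$ odd: $m=1$ gives codimension $0$; the next stratum $m=3$ has codimension $3$; $m\ge5$ gives codimension at least $10$.
\end{itemize}
In every case, all strata beyond the first two have codimension at least $3$. For $N$ even, this proves the statement exactly as written.

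\emph{Main obstacle.} The only genuine point is the odd-dimensional case. There the stratum adjacent to the open one has codimension $3$, not $1$, and the codimension-$1$ stratum $m=2$ is empty. So the statement must be read with $N$ even (or as ``the $m=2$ stratum, when nonempty, has codimension one''). I would make this case distinction explicit in the write-up.

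\emph{Verifying the formula.} As a sanity check on \eqref{eq:codim-m}, I would verify it directly rather than rely on the Grassmannian count. A form of nullity $m$ near $Q$ is, after choosing a complement, given in block form $\begin{pmatrix} A & B\\ -B^{T} & C\end{pmatrix}$ with $C$ invertible. Its nullity equals the nullity of the Schur complement $A + BC^{-1}B^{T}$, which is an $m\times m$ skew-symmetric matrix. Nullity $m$ therefore forces this Schur complement to vanish, which imposes exactly $\frac{m(m-1)}{2}$ independent conditions, confirming the codimension.
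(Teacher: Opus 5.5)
Your argument is correct and is essentially the paper's: the corollary is read off from \eqref{eq:codim-m}, giving codimension $0$ for $m=0,1$, codimension $1$ for $m=2$, and codimension $\frac{m(m-1)}{2}\ge 3$ for $m\ge 3$. The paper silently ignores parity, so your caveat is a correct sharpening: for odd $N$ the $m=2$ stratum is empty and the stratum adjacent to the open one ($m=3$) has codimension $3$, so the first clause only holds for even $N$ or in the reading ``the $m=2$ stratum, when nonempty''; your Schur-complement check of \eqref{eq:codim-m} is also sound, and it is more rigorous than the paper's Grassmannian count because it exhibits each stratum locally as a graph.
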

 
\section{Stratawise transversality of generic closed two-forms}

\subsection{Off-shell framework for the transversality study}

We regard a closed two-form $\omega$ as a section of this bundle
that satisfies the first order differential equation 
\be\label{eq:domega=0}
d\omega = 0
\ee
on $M$ by considering $d$ as a first-order differential operator 
$$
d: \Omega^2(M)\to \Omega^3(M).
$$
We consider the bundle of skew-symmetric bilinear forms
$$
\Lambda^2(M) \to M
$$
the fiber of which at $p \in M$ is given by $\Lambda^2(T_pM)$, and hence with rank $\frac{N(N-1)}{2}$.
We also consider $\Lambda^3(M)$.
Then we consider the fiber bundle
\be\label{eq:Lambda2mM}
\Lambda^2_m(M): = \bigcup_{x \in M} \{x\} \times \Lambda^2_m(T_xM):
\ee
The bundle $\Lambda^2_m(M) \to M$ is a fiber bundle whose fiber is given
by the skew-symmetric bilinear form of nullity $m$ $\Lambda^2_m(\R^N)$.

We have the decomposition
\be\label{eq:Lambda2M}
\Lambda^2(M) = \bigcup_{0 \leq m \leq [N/2] - 2k} \Lambda^2_m(M)
\ee
induced by \eqref{eq:decomposition},  where $2k$ is the maximal rank of the bilinear  
forms $\omega_x$, i.e.,
$$
2k = \max_{x \in M} \{ \rank \omega_x\}.
$$
Obviously, for every two-form $\omega$, we have
\be\label{eq:omegax-union}
\Lambda^2(T_x M) = \bigsqcup_{m = 0}^N \Lambda^2_m(T_x M).
\ee
\begin{cor}\label{cor:existence} For every nonzero two-form $\omega$,
there is some $m < N$, such that 
$$
\omega_x \in \Lambda_m^2(T_xM).
$$
\end{cor}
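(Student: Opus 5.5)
The statement is essentially pointwise linear algebra, so the plan is to combine the fiberwise decomposition \eqref{eq:omegax-union} with the observation that a nonzero skew form cannot have full nullity. I read ``nonzero two-form $\omega$'' as a section of $\Lambda^2(M)$ that is not identically zero. The natural statement is then that at some point $x\in M$ the value $\omega_x$ lies in a stratum $\Lambda^2_m(T_xM)$ with $m<N$. (At a point where $\omega_x = 0$ we have nullity $N$, so the ``some $m<N$'' must be realized at a point where $\omega$ does not vanish.)

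First, for any $x\in M$, the disjoint decomposition \eqref{eq:omegax-union} shows that $\omega_x$ belongs to exactly one stratum $\Lambda^2_m(T_xM)$, namely the one with $m=\nullity \omega_x$. Second, since $\omega\neq 0$, I choose $x\in M$ with $\omega_x\neq 0$.

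Third, I show that $\nullity\omega_x<N$ for this $x$. Recall from \eqref{kernel} that $\ker \omega_x = \{v \mid \omega_x(v,w)=0 \ \forall w\}$. If this kernel were all of $T_xM$, then $\omega_x(v,w)=0$ for all $v,w$, i.e.\ $\omega_x=0$, which contradicts the choice of $x$. Hence $m:=\nullity\omega_x\le N-1$, and $\omega_x\in\Lambda^2_m(T_xM)$ with $m<N$.

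As a refinement, the decomposition \eqref{eq:V-decompose} gives $N = m + \rank\omega_x$, with $\rank\omega_x$ even and positive. So in fact $m\le N-2$, which is consistent with the bound on the range of $m$ in \eqref{eq:Lambda2M}.

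There is no real obstacle here. The only delicate point is the quantifier on $x$: the corollary cannot hold for every $x$ (it fails at the zeros of $\omega$), so the proof has to fix a point where $\omega_x\neq0$.
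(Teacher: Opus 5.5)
Your proposal is correct and follows the paper's intended argument. The paper gives no separate proof: the corollary is stated as an immediate consequence of the pointwise disjoint decomposition \eqref{eq:omegax-union}, in which $\omega_x$ lies in the stratum of its nullity and nullity $N$ forces $\omega_x=0$. Your reading of the quantifier (fix $x$ with $\omega_x\neq 0$) is the right way to make the statement precise, and the refinement $m\le N-2$, which follows because the rank is even, is a correct addition.
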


We remind the readers that $\Omega^2(M)$ is an
infinite dimensional vector space.
Then we consider the differential operator
$$
d: \Omega^2(M) \to \Omega^3(M)
$$
and whose kernel, i.e., the subset of closed differential two-forms,
which we denote by $Z^2(M)$ as usual.

\begin{thm}\label{thm:transversality} We  consider the evaluation map 
$$
\ev: M \times \Omega^2(M) \to\Lambda^2(M); \quad \ev(x,\omega): = \omega_x.
$$
Then for each nullity $m$, $\ev$
is transverse to the stratum $\Lambda^2_m(M)$ at all $(x,\omega)$ satisfying
$$
d\omega = 0, \quad \omega_x \in \Lambda^2_m(T_xM).
$$
\end{thm}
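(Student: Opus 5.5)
The plan is to show that the differential of $\ev$ in the $\Omega^2(M)$ direction alone already surjects onto the fiber $\Lambda^2(T_xM)$. This is stronger than transversality to $\Lambda^2_m(M)$. To be useful later for $Z^2(M)$, the surjectivity should be achieved using only \emph{closed} variations $\alpha \in Z^2(M)$. At a point $(x,\omega)$ we have
$$
d\ev_{(x,\omega)}(v,\alpha) = \nabla_v \omega|_x + \alpha_x, \qquad v \in T_xM,\ \alpha \in T_\omega\Omega^2(M) = \Omega^2(M),
$$
where $\nabla_v$ is the vertical part computed in a local trivialization of $\Lambda^2(M)$. Transversality to $\Lambda^2_m(M)$ at $\omega_x$ requires
$$
\operatorname{Image} d\ev_{(x,\omega)} + T_{\omega_x}\Lambda^2_m(M) = T_{\omega_x}\Lambda^2(M) = T_xM \oplus \Lambda^2(T_xM).
$$
The horizontal factor $T_xM$ is covered by $v$, since $\Lambda^2_m(M)\to M$ is a fiber bundle by \eqref{eq:Lambda2mM}. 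So it is enough to show that $\alpha \mapsto \alpha_x$ is onto $\Lambda^2(T_xM)$. The hypothesis $d\omega=0$ plays no role in this step.

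The key step is to realize every $Q \in \Lambda^2(T_xM)$ as $\alpha_x$ for some closed form $\alpha$. Choose coordinates $(x^1,\dots,x^N)$ centered at $x$ and write
$$
Q = \sum_{i<j} q_{ij}\, dx^i\wedge dx^j .
$$
Take a cutoff function $\chi$ equal to $1$ near $x$ and supported in the chart, and set
$$
\alpha := d\Big(\chi \sum_{i<j} q_{ij}\, x^i dx^j\Big).
$$
Then $\alpha$ is exact, hence lies in $Z^2(M)$, and $\alpha_x = Q$ because $\chi\equiv 1$ near $x$. Therefore even the restricted map $Z^2(M)\ni \alpha\mapsto \alpha_x$ is surjective. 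This gives transversality of $\ev$, and also of its restriction to $M\times Z^2(M)$, to every stratum $\Lambda^2_m(M)$. The latter form is what the Sard--Smale argument needs.

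The main obstacle is functional-analytic rather than algebraic. For the Sard--Smale theorem one must work with a Banach completion, say $C^\ell$ or $W^{k,p}$ closed forms with $\ell$ large, and $\ev$ must be $C^1$ there, which requires enough regularity for point evaluation. One also has to check that the strata $\Lambda^2_m(M)$ are submanifolds of $\Lambda^2(M)$. This follows from the fiberwise dimension count \eqref{eq:dimLambda2m} together with local triviality via $GL(N)$-equivariance, since $GL(N)$ acts transitively on each $\Lambda^2_m(\R^N)$. The surjectivity construction above is local, uses smooth compactly supported forms, and so survives any such completion; I expect this step to cause no difficulty.
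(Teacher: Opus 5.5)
Your proof is correct, and it takes a genuinely different and much more direct route than the paper.

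\textbf{The paper's route.} The paper never exhibits preimages. It derives the theorem from Proposition~\ref{prop:transversality}, which concerns the combined map $\Upsilon(x,\omega)=(\omega_x,d\omega)$ against $\Lambda^2_m(M)\times\{0\}$. The steps there are:
\begin{enumerate}
\item pass to $W^{1,p}$/$L^p$ completions with $p>N$;
\item take an annihilating pair $(Q,\eta)$ via Hahn--Banach;
\item discard the $\nabla_v\omega$ term using the special connection of Theorem~\ref{thm:omega-special};
\item reduce to the distributional equation $\Delta\zeta+Q^\perp\delta_x=0$;
\item use the Green's function analysis of Lemma~\ref{lem:eta=0} to force $Q^\perp=0$ and $\eta=0$.
\end{enumerate}

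\textbf{Your route.} You show that $\alpha\mapsto\alpha_x$ is already onto $\Lambda^2(T^*_xM)$ on compactly supported exact forms. Hence $\operatorname{ev}$ is a submersion, even restricted to $M\times Z^2(M)$ or to a fixed class $\omega_h+B^2(M)$, and transversality to every stratum is immediate. One small point: the horizontal part is covered simply because $\pi\circ\operatorname{ev}$ is the projection to $M$; the fiber-bundle structure of $\Lambda^2_m(M)$ plays no role there. Essentially your local construction does appear in the paper, in the lemma on existence of nice closed two-forms ($\alpha_i=\frac14(Q_{ik}x_k-Q_{\ell i}x_\ell)$), but there it is used only for a spanning statement, not for transversality.

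\textbf{What your route buys.}
\begin{enumerate}
\item It needs no duality, Sobolev completion or elliptic regularity.
\item It has no restriction $N\ge 3$. In the paper that restriction comes precisely from the $r^{2-N}$ singularity of the Green's function.
\item It does not use $d\omega=0$.
\item The same trick proves Proposition~\ref{prop:transversality} outright. Given $(P,\eta)$, write $\eta=d\gamma$ and add a local exact form correcting $\gamma_x$ to $P$; this shows $d\Upsilon$ is surjective.
\item It sidesteps Theorem~\ref{thm:omega-special}, which cannot hold in general at degenerate points. For $Y,Z\in\ker\omega_x$, the value $(\nabla_X\omega)_x(Y,Z)$ is independent of the connection. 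For example, take $\omega=x_1\,dx_1\wedge dx_2$ at the origin of $\R^N$: then $(\nabla_{\partial_1}\omega)_0(\partial_1,\partial_2)=1$ for every connection.
\end{enumerate}

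What the paper's cokernel argument offers in principle is a template that still works in Floer-type problems where explicit preimages are unavailable. Here it is unnecessary.
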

The proof of this theorem will be postponed till the end of the
the present section. 

An immediate corollary of this transversality theorem is the following
\begin{cor} For each $m$, the preimage
$$
\ev^{-1}\left(\Lambda^2_m(M) \right)  \cap (M \times Z^2(M)) \subset M \times \Omega^2(M)
$$
is a (infinite dimensional) smooth submanifold of $M \times \Omega^2(M)$.
\end{cor}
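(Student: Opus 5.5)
The corollary should follow from Theorem \ref{thm:transversality} by the implicit function theorem in Banach manifolds; the only care needed is with the closedness constraint. Throughout I replace $\Omega^2(M)$ by a Banach completion $\Omega^2_{k}(M)$ (say $C^k$ or $W^{k,p}$ with $k$ large). There the closed forms $Z^2_k(M) = \ker d$ form a closed linear subspace, hence a Banach submanifold, and $M \times Z^2_k(M)$ is a Banach manifold. For the Fréchet space $\Omega^2(M)$ itself I will apply the standard Taubes/Floer-type intersection over $k$ at the end.

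\emph{Step 1.} Show that $\Lambda^2_m(M) \subset \Lambda^2(M)$ is a smooth (finite-dimensional) submanifold of codimension $\frac{m(m-1)}{2}$. This is fibrewise the content of the dimension computation \eqref{eq:codim-m}. The set $\Lambda^2_m(V)$ is an orbit of $GL(V)$ acting by congruence on $\Lambda^2(V)$, since skew forms are classified by their rank. Orbits of algebraic actions are smooth locally closed submanifolds, and local trivializations of $TM$ make $\Lambda^2_m(M)\to M$ a smooth subbundle. Its normal space at $Q$ is identified with $\Lambda^2(\ker Q)^*$ via restriction $Q'\mapsto Q'|_{\ker Q}$; this matches the codimension $m(m-1)/2$.

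\emph{Step 2.} Consider the restricted map $\ev|: M \times Z^2_k(M) \to \Lambda^2(M)$, which is smooth (for $k \ge 1$ it is $C^\ell$ with $\ell$ large). Theorem \ref{thm:transversality} gives transversality of $\ev$ at points with $d\omega=0$. What I need, however, is transversality of the restriction to the constraint set. So the first thing I would check is that the proof of Theorem \ref{thm:transversality} actually produces the required normal directions using variations $\delta\omega \in Z^2(M)$. I expect it does: composing with the restriction to $\Lambda^2(\ker\omega_x)^*$, one needs every $\beta \in \Lambda^2(\ker\omega_x)^*$ to be realized as $\alpha_x|_{\ker \omega_x}$ for some closed $\alpha$. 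This is easy to arrange directly. Take $\alpha = d(\chi \lambda)$ with $\lambda$ a linear 1-form in a chart around $x$ satisfying $d\lambda = \beta'$ constant, and $\chi$ a cutoff equal to $1$ near $x$. Then $\alpha$ is exact, hence closed, and $\alpha_x = \beta'$ is arbitrary. So the restricted evaluation at the fixed point $x$ is already a submersion onto $\Lambda^2(T_xM)$, and in particular transverse to $\Lambda^2_m(M)$.

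\emph{Step 3.} Apply the implicit function theorem for maps from Banach manifolds transverse to a finite-codimensional submanifold. The preimage $(\ev|)^{-1}(\Lambda^2_m(M))$, which equals the set in the corollary, is then a smooth Banach submanifold of $M\times Z^2_k(M)$ of codimension $m(m-1)/2$. The kernel of the linearized map splits automatically because the codimension is finite. Since $M\times Z^2_k(M)$ is a closed submanifold of $M\times\Omega^2_k(M)$, the preimage is also a submanifold of the latter.

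The main obstacle is the point addressed in Step 2. The statement of Theorem \ref{thm:transversality} is phrased for $\ev$ on all of $M\times\Omega^2(M)$, while the corollary concerns the intersection with $M\times Z^2(M)$. Transversality of the ambient map does not by itself imply that this intersection is a submanifold, so one must verify transversality with closed variations only. The cutoff exact-form construction above handles this; the remaining work is the routine passage from $C^k$ to $C^\infty$.
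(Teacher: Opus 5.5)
Your proof is correct, and it reaches the corollary by a different route from the paper's.

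\emph{The paper's route.} The paper deduces the corollary from Theorem \ref{thm:transversality}, via the auxiliary map $\Upsilon(x,\omega)=(\omega_x,d\omega)$ and the identity \eqref{eq:Promega}. It proves transversality of $\Upsilon$ to $\Lambda^2_m(M)\times\{0\}$ (Proposition \ref{prop:transversality}) dually. It completes in $W^{1,p}$/$L^p$ with $p>N$ and takes an annihilator $(Q,\eta)$ of the image. It then discards the $\nabla_v\omega$ term using the special connection of Theorem \ref{thm:omega-special}, reduces to $\Delta\zeta+Q^\perp\delta_x=0$, and kills the cokernel with the Green-function estimate of Lemma \ref{lem:eta=0}.

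\emph{Your route.} You restrict to the linear constraint set $M\times Z^2(M)$ and prove surjectivity constructively with cutoff exact forms $d(\chi\lambda)$. This is a globalized version of the local primitive $\alpha_i=\frac14(Q_{ik}x_k-Q_{\ell i}x_\ell)$ that the paper itself writes down later, in its non-emptiness lemma.

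Since $d\colon\Omega^2(M)\to B^3(M)$ is onto with kernel $Z^2(M)$, transversality of $\Upsilon$ to $\Lambda^2_m(M)\times\{0\}$ is equivalent to transversality of your restricted evaluation map. So the two proofs target the same infinitesimal statement and differ only in method. Yours buys a great deal:
\begin{enumerate}
\item it shows the restricted evaluation is a submersion, so the codimension $m(m-1)/2$ comes for free;
\item it needs no elliptic theory;
\item it does not rely on Theorem \ref{thm:omega-special}, which cannot hold in general. Where $\omega_x=0$, $\nabla\omega|_x$ is independent of the connection, so for $\omega=x_1\,dx_1\wedge dx_2$ near the origin no connection has $\nabla\omega|_0=0$.
\end{enumerate}

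Two minor corrections.

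First, the passage to $C^\infty$ is not a Taubes-type intersection over $k$; that device concerns residual sets of regular values, not submanifold statements. Your own construction gives Fr\'echet charts directly. Let $\alpha^{ij}$ be smooth cutoff exact forms equal to $dx_i\wedge dx_j$ on a neighborhood $W$ of $x_0$, and put $Z'=\{\omega'\in Z^2(M):\omega'_{x_0}=0\}$. Then every closed form decomposes uniquely as $\omega=\omega'+\sum_{i<j}c_{ij}\,\alpha^{ij}$. Reading $\omega'_x$ in the coordinate trivialization over $W$, the change of variables $(x,\omega',c)\mapsto(x,\omega',\omega'_x+c)$ carries your set onto $W\times Z'\times\Lambda^2_m(\R^N)$.

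Second, if you do work in a Banach completion, use $W^{k,p}$ or $C^{k,\alpha}$ rather than $C^k$. Hodge theory then makes $Z^2_k(M)$ a complemented subspace.
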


\subsection{Parametrization of closed two-forms}

We fix a Riemannian metric $g$ on $M$ and consider the Hodge decomposition
$$
\Omega^2(M) = H^2(M) \oplus d\Omega^1(M) \oplus \delta \Omega^3(M).
$$
We start with recalling the following consequence of the Hodge decomposition.
\begin{prop} Let $\omega$ be a given closed two-form. Then any other 
closed two form in the same de Rham cohomology class $[\omega]$
can be expressed as
$$
\omega' = \omega_h + d\beta, \quad \omega_h: =  \CH(\omega)
$$
for some $\beta \in \delta\Omega^1(M)$.
\end{prop}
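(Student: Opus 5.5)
The plan is to read the statement off the Hodge decomposition for the fixed metric $g$, using only $L^2$-orthogonality of the three summands and the fact that the harmonic projection $\CH$ is constant on cohomology classes. Write $\omega'$ for a closed two-form with $[\omega'] = [\omega]$, and decompose
$$
\omega' = h' + d\alpha + \delta\gamma, \qquad h' \in H^2(M),\ \alpha \in \Omega^1(M),\ \gamma \in \Omega^3(M).
$$

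\emph{Step 1: the coexact part vanishes.} Since $dh' = 0$ and $d(d\alpha)=0$, closedness of $\omega'$ gives $d\delta\gamma = 0$. Pairing with $\gamma$ and integrating by parts on the closed manifold $M$ yields
$$
0 = \langle d\delta\gamma, \gamma\rangle_{L^2} = \|\delta\gamma\|_{L^2}^2 .
$$
Hence $\delta\gamma = 0$, and $\omega' = h' + d\alpha$.

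\emph{Step 2: identify the harmonic part.} Applying Step 1 to $\omega$ itself gives $\omega = \omega_h + d\alpha_0$ with $\omega_h = \CH(\omega)$. Because $[\omega'] = [\omega]$, the form $h' - \omega_h$ is both harmonic and exact. It is therefore $L^2$-orthogonal to itself, so $h' = \omega_h$. Thus $\omega' = \omega_h + d\alpha$.

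\emph{Step 3: normalize the primitive.} Hodge-decompose the one-form $\alpha = h_1 + df + \delta\mu$, with $h_1$ harmonic, $f \in \Omega^0(M)$ and $\mu \in \Omega^2(M)$. Since $dh_1 = 0$ and $d\,df = 0$, we get $d\alpha = d\delta\mu$. Setting $\beta := \delta\mu$ gives
$$
\omega' = \omega_h + d\beta ,
$$
with $\beta$ coexact. This choice of $\beta$ is unique, because $d$ is injective on coexact one-forms: if $d\delta\mu = 0$ then $\|\delta\mu\|^2 = \langle \mu, d\delta\mu\rangle = 0$.

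One correction is needed in the statement. The coexact one-forms form $\delta\Omega^2(M)$, not $\delta\Omega^1(M)$, since $\delta\Omega^1(M)$ consists of functions. I would state the result with $\beta \in \delta\Omega^2(M)$.

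No step is a real obstacle; the only care needed is that the integrations by parts use compactness of $M$ without boundary, which is assumed throughout this part. Uniqueness of $\beta$ is what makes $\beta \mapsto \omega_h + d\beta$ a global linear parametrization of each cohomology class, which is presumably how this result will feed into the transversality argument.
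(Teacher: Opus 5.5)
Your argument is correct and takes the same route as the paper: Hodge-decompose to get $Z^2(M) = H^2(M) \oplus d\Omega^1(M)$, then replace the primitive by its coexact component using $\Omega^1(M)/Z^1(M) \cong \delta\Omega^2(M)$. You also supply the integration-by-parts justifications the paper leaves implicit, and your correction of $\delta\Omega^1(M)$ to $\delta\Omega^2(M)$ is right and matches the space the paper itself uses in the discussion right after the statement.
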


To provide the the structure of Banach manifolds on the set of closed two-forms 
which we will need later in the next subsection, we provide some details of this decomposition now.

We have
$$
Z^2(M) = H^2(M) \oplus d\Omega^1(M)
$$
and a given closed two form $\omega$ can be decomposed into
$$
\omega = \CH(\omega) + d\beta
$$
for the $\CH(\omega)$ the harmonic projection of $\omega$.  WLOG,
we will fix the cohomology class $[\omega] \in H^2_{\text{\rm dR}}(M,\R)$ and then we can parameterize 
a $C^\infty$ neighborhood of any given closed two-form by the set
$$
d\Omega^1(M) = d (\Omega^1(M) / Z^1(M)).
$$
Again by the Hodge decomposition 
$$
\Omega^1(M) = H^1(M) \oplus d\Omega^0(M) \oplus \delta \Omega^2(M),
$$
we have
$$
\Omega^1(M)/Z^1(M) \cong \delta \Omega^2(M)
$$
and hence
$$
d\Omega^1(M) \cong d \delta \Omega^2(M) \cong d \delta d\Omega^1(M).
$$
Furthermore $Z^2(M)$ is decomposed into the union 
$$
Z^2(M) = \bigsqcup_{h \in H^2(M)} Z^2_h(M), \quad Z^2_h(M): =  \{\omega \in Z^2(M) \mid [\omega] = h\}
$$
of affine spaces
$$
Z^2_h(M) =  \omega_h + B^2(M)
$$
where $\omega_h$ is the unique harmonic 2-form in class $h \in H^2_{\text{\rm dR}}(M)$.
We denote by 
$$
\mathsf H_g^2 = \mathsf H^2(M;g) \cong \R^{b_2(M)}
$$
 the set of harmonic 2-forms of the given Riemannian metric $g$ of $M$.
By this proposition, the set $Z^2(M)$ is a smooth Frechet manifold modeled by 
\be\label{eq:tangent-space}
\mathsf H^2(M) \oplus B^2(M) \cong 
\R^{b_2} \oplus B^2(M) 
\ee
such that the tangent space $T_\omega Z^2(M)$ can be canonically 
identified with \eqref{eq:tangent-space}.

\subsection{Fredholm property of the parameterized linearization of $\Upsilon$}

To establish the transversality stated in  Theorem \ref{thm:transversality}, we
consider the following map
$$
\Upsilon: M \times \Omega^2(M) \to  \Lambda^2(M) \times B^3(M)
$$
defined by
\be\label{eq:Upsilon}
\Upsilon(x,\omega) : = (\omega_x, d\omega).
\ee
Here we recall the definition of the vector bundle $\Lambda^2(M) \to M$ where
$\Lambda^2(M)$ is the finite union \eqref{eq:Lambda2M} of (finite-dimensional) vector bundles
\eqref{eq:Lambda2mM}.

We  will need to prove transversality of the map against
$$
\Lambda^2_m(M) \times \{0\} \subset \Lambda^2(M) \times B^3(M).
$$
Here $B^3(M)$ is the set of exact 3-forms:
By definition, the map has its natural codomain
contained in $\Lambda^2(M) \times B^3(M)$, and we have the includsion
$$
\Upsilon^{-1}(\Lambda^2_m(M) \times \{0\}) 
\subset M \times Z^2(M).
$$
In fact, by definition, we have  
\be\label{eq:Promega}
\ev^{-1}\left(\Lambda^2_m(M)\right) \cap (M \times Z^2(M)) 
= \Upsilon^{-1}\left(\Lambda_m^2(M) \times \{0\}\right).
\ee

Let $(x,\omega) \in M \times \Omega^2(M)$ such that
$d\omega = 0$ and $\omega_x \in \Lambda^2_m(M)$ for some $m = 0, \, 2, \cdots$. We have
$$
T_{(x,\omega)}(M \times \Omega^2(M)) = T_xM \oplus \Omega^2(M)
$$
and 
$$
T_{(\omega_x,d\omega)} (\Lambda^2(M) \times B^3(M))
= \Lambda^2(T_x^*M) \oplus B^3(M).
$$
The following is straightforward to check.
\begin{lem}\label{lem:linearization}
The  linearization of $\Upsilon$ at $(x,\omega)$ is given by
\be\label{eq:dUpsilon}
d\Upsilon(x, \omega): T_xM \oplus \Omega^2(M) \to \Lambda^2(T_x^*M) \oplus B^3(M)
\ee
where we have
\be\label{eq:dUpsilon-formula}
d\Upsilon(x, \omega)(v, \alpha) = (\nabla_v \omega + \alpha_x, d\alpha).
\ee
for $\alpha \in \Omega^2(M)$.
\end{lem}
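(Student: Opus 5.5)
The plan is to differentiate $\Upsilon(x,\omega) = (\omega_x, d\omega)$ one component at a time along a smooth curve $(x(t),\omega(t))$ with $x(0)=x$, $\dot x(0)=v$, $\omega(0)=\omega$, $\dot\omega(0)=\alpha$. Before doing so, I will fix how tangent spaces of the total space $\Lambda^2(M)$ are identified with the fibre. The identification $T_{(\omega_x,d\omega)}(\Lambda^2(M)\times B^3(M)) = \Lambda^2(T_x^*M)\oplus B^3(M)$ is made via a connection $\nabla$ on $\Lambda^2(T^*M)$, for instance the one induced by the Levi-Civita connection of the fixed metric $g$. This amounts to splitting $T_{\omega_x}\Lambda^2(M) = H \oplus V$, where the vertical part $V$ is canonically $\Lambda^2(T_x^*M)$. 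The first component of $d\Upsilon$ is then meant to be the vertical projection, i.e. the covariant derivative of the section $t\mapsto \omega(t)_{x(t)}$ along the curve $x(t)$.

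For the second component, note that $d:\Omega^2(M)\to\Omega^3(M)$ is linear and does not depend on $x$. Hence $\frac{d}{dt}\big|_{t=0} d\omega(t) = d\alpha$, and there is no contribution from $v$. This gives the entry $d\alpha$ in \eqref{eq:dUpsilon-formula}.

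For the first component, I will write $\omega(t)_{x(t)} = \omega_{x(t)} + (\omega(t)-\omega)_{x(t)}$. Applying $\frac{D}{dt}\big|_{t=0}$ gives two pieces.
\begin{itemize}
\item The first summand is the fixed section $\omega$ along $x(t)$, so its covariant derivative is $\nabla_v\omega$ by definition of $\nabla$.
\item The second summand is a family of sections that vanishes at $t=0$. By the Leibniz rule its covariant derivative at $t=0$ is $\dot\omega(0)_x = \alpha_x$. The term in which $\nabla$ hits the moving base point is multiplied by $(\omega(0)-\omega)_x=0$.
\end{itemize}
Summing the two pieces gives $\nabla_v\omega+\alpha_x$, which is the stated formula. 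I will also observe that for $v=0$ the formula is independent of the connection. Since $d\alpha$ is exact, the image indeed lies in $B^3(M)$.

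The only real subtlety, and the step I expect to need the most care, is making precise the identification of $T_{\omega_x}\Lambda^2(M)$ with $\Lambda^2(T_x^*M)$, since without a connection only the vertical part is canonical. I would handle this by stating explicitly that the horizontal component $v\in T_xM$ is suppressed under the splitting $H\oplus V$. This is harmless for the later transversality argument. The reason is that the strata $\Lambda^2_m(M)$ are bundles over $M$ whose fibres are the $\Lambda^2_m(T_xM)$, so $M$-directions are always tangent to $\Lambda^2_m(M)$, and transversality reduces to the fibre component written above.
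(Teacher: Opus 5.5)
Your computation is correct and is exactly the direct check the paper leaves as ``straightforward to check'' without further argument; your explicit remark that $\Lambda^2(T_x^*M)$ is only the vertical part of $T_{\omega_x}\Lambda^2(M)$, split off by a connection induced from the Levi-Civita connection, is a clarification the paper omits. Two small precisions. First, the cross term vanishes because $\omega(0)-\omega$ is identically zero as a section, so its covariant derivative is zero; vanishing at $x$ alone would not be enough. Second, the suppressed horizontal component is harmless because parallel transport for a connection induced from $TM$ acts by linear isomorphisms and so preserves nullity, which makes horizontal lifts tangent to $\Lambda^2_m(M)$; the strata merely being bundles over $M$ does not by itself give this.
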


The following is the main transversality result of the present subsection.
\begin{prop}\label{prop:transversality}
The map $\Upsilon$ is transversal to 
$$
\Lambda^2_m(M) \times \{0\}
$$
in $\Lambda^2(M) \times B^3(M)$ for each nullity $m$. 
\end{prop}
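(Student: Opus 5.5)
Transversality of $\Upsilon$ to $\Lambda^2_m(M)\times\{0\}$ at a point $(x,\omega)$ with $d\omega=0$ and $\omega_x\in\Lambda^2_m(T_xM)$ amounts to
$$
\operatorname{Image} d\Upsilon(x,\omega) + T_{(\omega_x,0)}\bigl(\Lambda^2_m(M)\times\{0\}\bigr) = \Lambda^2(T_x^*M)\oplus B^3(M).
$$
Using the formula \eqref{eq:dUpsilon-formula} of Lemma \ref{lem:linearization}, we prove the stronger statement that $d\Upsilon(x,\omega)$ restricted to $\{0\}\oplus\Omega^2(M)$ is already surjective onto $\Lambda^2(T_x^*M)\oplus B^3(M)$. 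In that case the tangent space of the stratum plays no role, and the conclusion holds uniformly in $m$. The map to be analyzed is
$$
\alpha\mapsto (\alpha_x, d\alpha).
$$

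We proceed in two steps. First, given a target $(Q,\gamma)$ with $Q\in\Lambda^2(T_x^*M)$ and $\gamma\in B^3(M)$, write $\gamma=d\beta$ for some $\beta\in\Omega^2(M)$. We may take $\beta=\delta G\gamma$ via the Hodge decomposition with respect to the fixed metric $g$, where $G$ is the Green operator. This handles the second component. Second, we correct the first component without disturbing $d\alpha$, i.e.\ we add a closed two-form $\theta$ with prescribed value
$$
\theta_x = Q-\beta_x.
$$
The candidate is $\theta=d(\chi\,\lambda)$. Here we take linear coordinates $y$ centered at $x$ and let $\lambda=\tfrac12 \sum_{i<j}c_{ij}(y_i\,dy_j - y_j\,dy_i)$, so that
$$
d\lambda=\sum_{i<j} c_{ij}\, dy_i\wedge dy_j.
$$
The function $\chi$ is a bump function equal to $1$ near $x$. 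Then $\theta$ is exact, hence closed, and $\theta_x=\sum c_{ij}dy_i\wedge dy_j$ is an arbitrary element of $\Lambda^2(T_x^*M)$. Setting $\alpha=\beta+\theta$ gives
$$
d\Upsilon(x,\omega)(0,\alpha)=(Q,\gamma),
$$
which proves surjectivity.

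The main point to check carefully is that this algebraic surjectivity is the right notion of transversality in the chosen functional setting. Since $\Omega^2(M)$ and $B^3(M)$ are Fréchet spaces, we would pass to Sobolev or $C^k$ completions, say $\Omega^2_{k}$ with $k$ large enough that evaluation at a point is continuous. We then need the kernel of $d\Upsilon$ to be complemented, so that the preimage is a Banach submanifold. The kernel of $\alpha\mapsto(\alpha_x,d\alpha)$ is the space of closed forms vanishing at $x$. This space has finite codimension in $Z^2_k(M)$, which is itself complemented by the Hodge decomposition $\Omega^2 = Z^2\oplus\delta\Omega^3$. Hence the kernel splits.

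We expect the only delicate issue to be the point-evaluation regularity, which requires $k>N/2$ in the Sobolev setting. This appears to be where the paper's remark about Green function singularities in dimension $N\geq 3$ enters, in case one instead tries to realize $Q$ by the Hodge-theoretic solution rather than by the explicit local exact form above. We would use the explicit cutoff construction to sidestep this issue, and then invoke the Sard–Smale theorem later for the residual set.
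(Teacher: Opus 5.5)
Your proof is correct, and it takes a genuinely different and much more elementary route than the paper's.

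The paper argues dually. It takes $(Q,\eta)$ in the $L^2$-annihilator of the image of $d\Upsilon(x,\omega)$ plus $T_{\omega_x}\Lambda^2_m(M)\times\{0\}$, and uses the special connection of Theorem~\ref{thm:omega-special} to reduce to the normal component $Q^\perp$. It then derives the distributional equation $\delta\eta+Q^\perp\delta_x=0$ and writes $\eta=d\zeta$ with $\Delta\zeta+Q^\perp\delta_x=0$, so that $\zeta$ is a multiple of the Green form. Finally it kills $Q^\perp$ and $\eta$ by the test-form contradiction of Lemma~\ref{lem:eta=0}. This is where $p>N$ and the $r^{2-N}$ singularity (hence $N\geq 3$) enter.

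You work on the primal side instead. The map $d\colon\Omega^2(M)\to B^3(M)$ is onto by definition. Evaluation at $x$ is already onto $\Lambda^2(T_x^*M)$ on exact forms via your cut-off linear primitive $\chi\lambda$; this is essentially the same linear one-form the paper writes down later in its ampleness lemma, in the existence subsection preceding Definition~\ref{defn:nice-forms}. Hence $\alpha\mapsto(\alpha_x,d\alpha)$ is surjective.

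Your route buys a lot:
\begin{itemize}
\item no Green function analysis and no special connection;
\item no dependence on $m$ or on $N\geq 3$, so the case $N=2$ that the paper sets aside is covered;
\item actual preimages, rather than surjectivity inferred from a vanishing annihilator, which by itself only gives dense range;
\item the $\alpha$-partial derivative alone is a submersion, which is exactly what the later Sard--Smale step needs.
\end{itemize}

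Your splitting argument is fine in Sobolev (or H\"older) completions, where the Hodge decomposition $\Omega^2=Z^2\oplus\delta\Omega^3$ is bounded. In plain $C^k$ spaces that projection is not bounded, so you should stick with Sobolev or H\"older spaces.

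One bookkeeping remark, inherited from the paper's formula \eqref{eq:dUpsilon-formula}. The tangent space of the total space $\Lambda^2(M)$ at $\omega_x$ also has a horizontal $T_xM$-component, which $\{0\}\oplus\Omega^2(M)$ does not reach. It is covered by the $v$-variable, or simply by the fact that $\Lambda^2_m(M)$ is a subbundle and so contains all horizontal directions. Your conclusion stands.
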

\begin{proof} 
Let $(x,\omega) \in M \times \Omega^2(M)$ such that
$d\omega = 0$ and $\omega_x \in \Lambda^2_m(T_xM)$. We need to show
\be\label{eq:transversality}
\Image d_{(x,\omega)}\Upsilon + \left(T_{\omega_x} (\Lambda^2_m(M)) \times \{0\} \right)
= T_{\omega_x}( \Lambda^2(M)) \times T_\omega( B^3(M))
\ee
at all $(x,\omega) \in\Upsilon^{-1}(\Lambda^2_m(M)\times \{0\})  \subset M \times Z^2(M)$.

For this purpose, we  apply
the Fredholm alternative and the Hahn-Banach theorem (after suitably completing
the codomain of the linearization map $d_{(x,\omega)}\Upsilon$, after taking a suitable
completion of the domain and the codomain of the linear map $d\Upsilon(x,\omega)$ of \eqref{eq:dUpsilon}).  
Noting that 
$d\Upsilon(x, \omega)$ is a first-order differential operator as given in the formula \eqref{eq:dUpsilon-formula}, 
we will take the $W^{1,p}$ and $L^p$ 
completions of the domain and of the codomain respectively for a suitable choice of $p > 1$.
First of all, for the evaluation map $\omega \mapsto \omega_x$ to make sense and  to be continuous,
we need to choose
\be\label{eq:p>N}
p> N = \dim M
\ee
by the Sobolev embedding $W^{1,p} \hookrightarrow C^0$. (See \cite[Theorem 7.10]{gilbarg-trudinger}, 
for example.)

Suppose 
$$
(Q, \eta) \in T_{\omega_x}( \Lambda^2(M)) \times T_\omega(B^3_{L^p}(M))
$$
is contained in
\bea
&{} &\left(\Image d_{(x,\omega)}\Upsilon +  (T_{\omega_x} (\Lambda^2_m(M)) \times B^3(M))\right)^{\perp_2}
\label{eq:etainL2} \\
&= & (\Image d_{(x,\omega)}\Upsilon)^{\perp_2}  \cap  (T_{\omega_x} (\Lambda^2_m(M)) \times B^3(M))^{\perp_2} 
\nonumber \\
& \subset & T_{\omega_x}\Lambda^2(M) \times B^3_{L^q}(M) \nonumber
\eea
with $\frac1q + \frac1p = 1$.
 
Recalling $\eta  \in T_{d\omega} B^3_{L^p} (M)= T_{0} B^3(M) \cong B^3(M)$, it also
weakly satisfies $\eta = d \zeta$ for some $\zeta \in \Omega^2_{W^{1,p}}(M)$. 
This makes \eqref{eq:L2-cokernel} is equivalent to
\be\label{eq:L2-cokernel}
\langle \nabla_v \omega + \alpha_x, Q_x \rangle  
+  \int_M \langle d\alpha, \eta \rangle \, d\vol _g = 0
 \ee
 for all $\eta = d \zeta$ with $\zeta \in \Omega^2_{W^{1,p}}(M)$.
In addition, $(Q,\eta) \in  (T_{\omega_x} (\Lambda^2_m(M)) \times B^3_{L^p}(M))^{\perp_2}$ is 
equivalent to
$$
\langle \kappa_x, Q_x \rangle = 0 \quad \forall \kappa \in T_{\omega_x} (\Lambda^2_m(M)).
$$
The latter implies 
\be\label{eq:Q-parallel}
Q_x \in (T_{\omega_x}\Lambda^2(M))^{\perp}
\ee
i.e., $Q_x^\parallel = 0$ in the decomposition of $Q_x$ into 
$$
Q_x =  Q_x^\parallel + Q_x^\perp
$$
according to the orthogonal decomposition
$$
T_{\omega_x} (\Lambda^2(M)) = T_{\omega_x}(\Lambda_m^2(M)) \oplus N_{\omega_x}(\Lambda_m^2(M)).
$$
Substituting this back into \eqref{eq:L2-cokernel} and using the special connection given in Theorem \ref{thm:omega-special} gives rise to
$$
\langle \alpha_x^\perp, Q_x^\perp \rangle + \int_M \langle d\alpha,d\zeta \rangle \, d\vol _g = 0
$$
for all smooth $\alpha \in \Omega^2(M)$.  
 Therefore  $\eta \in L^q$ is a distributional solution of the equation
 \be\label{eq:delta-eta=0}
 \delta \eta + Q^\perp \delta_x = 0.
 \ee
 
Recalling that we take the $L^p$-completion of the image of the operator \eqref{eq:dUpsilon}, 
we can express $\eta = d\zeta$
for a $W^{1,q}$-form $\zeta$ by the Sobolev space version of the Hodge decomposition theorem. 
(See \cite[Lemma 2 \& 3, p.124]{deRham}, for example.) Without loss of 
generality,  again applying the Hodge decomposition,
we may assume $\delta \zeta = 0$ and $\zeta \in W^{1,q}$ as $d\zeta' = d\zeta$
when $\zeta' = \zeta + \delta$ for any closed 2-form $\delta$.
Then \eqref{eq:delta-eta=0} is reduced to
\be\label{eq:zeta}
\Delta \zeta + Q_x^\perp \delta_x = 0.
\ee
 If $Q_x^\perp = 0$, then $\zeta$ satisfies $\Delta \zeta = 0$ weakly and so a smooth
 harmonic form on $M$. In particular $\eta = d\zeta = 0$.
 
 Therefore we now assume $Q^\perp_x \neq 0$ and so has rank at least 2 by skew-symmetry.
 In this case, $\zeta$ is a fundamental solution of $\Delta$ multiplied by $C Q^\perp$ 
for some constant $C$, i.e., we have
$$
\zeta = C\,  G \, Q^\perp
$$
with the Green form $G$ in a coordinate neighborhood $U$ 
with coordinates centered at $x$ where $Q^\perp \equiv Q^\perp_x$
is the constant bilinear form on $U$.

 We postpone the proof of the following lemma till Appendix \ref{sec:eta=0}.
\begin{lem}\label{lem:eta=0} The one-form $\zeta$ extends smoothly across $x$ and so a smooth
harmonic one-form. In particular, we have $\eta = d\zeta = 0$.
\end{lem}

This lemma in particular implies 
\be\label{eq:eta=0}
\eta = d\zeta = 0
\ee
 in $ B^3_{L^p}(M)$. 
Substituting back this into the equation \eqref{eq:L2-cokernel}, we also prove
$$
\langle \alpha_x^\perp, Q_x^\perp \rangle = 0
$$
for all smooth $\alpha$, which implies 
\be\label{eq:Q-perp}
Q_x^\perp  = 0.
\ee
Combining \eqref{eq:Q-parallel}, \eqref{eq:eta=0} and \eqref{eq:Q-perp}, we have proved
$(Q,\eta) = 0$. This  proves that the $L^2$-cokernel \eqref{eq:etainL2} is trivial,
which finishes the proof of 
\eqref{eq:transversality} and hence the proposition.
 \end{proof}
 
\section{Application of Sard-Smale theorem and existence}
  
Let $m$ be fixed and consider the projection map
$$
\Pi_m: \Upsilon^{-1}(\Lambda^2_m(M) \times \{0\}) \to Z^2(M):
$$
\subsection{Banach manifold set-up}

In this subsection, we consider the $C^k$ forms $\omega$ for $k \geq 2$,
and extend the map $\Upsilon$ on $M \times C^k(\Lambda^*(M))$ and take the
completion of $d$ thereto 
$$
d:\Omega_{C^k}^*(M) \to \Omega_{C^{k-1}}^{*+1}(M).
$$
(See \cite{deRham} for the detailed study of this extended operator and the associated
elliptic complex.) We denote by $\Upsilon_k$ this $C^k$-extension of $\Upsilon$.

We denote by 
\bea\label{eq:ZakBak}
Z^a_{C^k}(M) &: = & \ker \left(d:\Omega^a_{C^k}(M) \to 
\Omega^{a+1}_{C^{k-1}}(M)\right), \nonumber \\
 B^a_{C^k}(M) &: = & \Image \left(d:\Omega_{C^{k+1}}^{a-1}(M) \to \Omega_{C^k}^a(M) \right).
\eea
This being said, we consider  the restriction
\be\label{eq:Pimk}
\Pi_m^k:  \Upsilon_k^{-1}(\Lambda^2_m(M) \times \{0\}) \subset  M \times Z^2_{C^k}(M) \to Z^2_{C^k}(M)
\ee
of the $C^k$-extension of the obvious projection map
$$
M \times \Omega_{C^k}^2(M) \to \Omega_{C^k}^2(M).
$$

The following Fredholm property is a key element of the proof.
\begin{prop} The map $\Pi_m$ in \eqref{eq:Pimk} is a Fredholm map.
\end{prop}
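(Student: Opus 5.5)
The plan is to use the standard fact from Sard--Smale theory. If $F:\mathcal X\to \mathcal Y$ is a $C^\ell$ map between Banach spaces that is transverse to a submanifold $\mathcal W$ of finite codimension, then the restriction of the projection $\mathcal M:=F^{-1}(\mathcal W)\to \mathcal P$ is Fredholm. Here $\mathcal X = M\times \mathcal P$, the parameter is $\omega$, and $M$ is finite dimensional. Its kernel and cokernel are identified with those of the \emph{unparametrized} linearization, i.e.\ the partial derivative in the $M$-direction composed with the projection to the normal space of $\mathcal W$. So first set up the Banach manifold structure. By Proposition \ref{prop:transversality}, carried out in the $C^k$ setting, together with \eqref{eq:Promega}, the domain
\[
\mathcal M_m^k:=\Upsilon_k^{-1}(\Lambda^2_m(M)\times\{0\}) = \{(x,\omega)\in M\times Z^2_{C^k}(M)\mid \omega_x\in \Lambda^2_m(T_xM)\}
\]
is a $C^{k-1}$ Banach submanifold of $M\times Z^2_{C^k}(M)$. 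It has codimension $\frac{m(m-1)}{2}$ by \eqref{eq:codim-m}. Its tangent space at $(x,\omega)$ is
\[
T_{(x,\omega)}\mathcal M_m^k=\{(v,\alpha)\in T_xM\oplus Z^2_{C^k}(M)\mid \nabla_v\omega+\alpha_x\in T_{\omega_x}\Lambda^2_m(M)\}.
\]

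Next compute $d\Pi_m^k(x,\omega)(v,\alpha)=\alpha$ and identify its kernel and cokernel.

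\emph{Kernel.} It consists of $(v,0)$ with $\nabla_v\omega\in T_{\omega_x}\Lambda^2_m$. This is a subspace of $T_xM$, hence of dimension at most $N$.

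\emph{Image.} It is the set of $\alpha\in Z^2_{C^k}$ for which some $v\in T_xM$ satisfies $\alpha_x^\perp\in -\nabla_v\omega^\perp + 0$. Here $(\cdot)^\perp$ denotes the projection to the finite-dimensional normal space $N_{\omega_x}\Lambda^2_m$ (the decomposition used in the proof of Proposition \ref{prop:transversality}). Equivalently, the image is the preimage of the finite-dimensional subspace $\{\nabla_v\omega^\perp : v \in T_xM\}$ under the continuous linear map
\[
L_x: Z^2_{C^k}(M)\to N_{\omega_x}\Lambda^2_m(M),\qquad \alpha\mapsto \alpha_x^\perp .
\]
Such a preimage is closed, and its codimension is at most $\dim N_{\omega_x}\Lambda^2_m=\frac{m(m-1)}2$. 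Hence $\Pi_m^k$ is Fredholm, with index $\dim \ker - \operatorname{codim}\operatorname{Im} = N-\frac{m(m-1)}{2}$. In fact transversality shows that $v\mapsto \nabla_v\omega^\perp$ plus $L_x$ is onto, which gives exactly this index.

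The one step needing care is the surjectivity of $L_x$ restricted to \emph{closed} forms. This is precisely what was proved in Proposition \ref{prop:transversality}, where the cokernel analysis with $\eta=d\zeta$ and Lemma \ref{lem:eta=0} showed that pointwise values $\alpha_x$ of closed forms sweep out all of $N_{\omega_x}$. One can also see it directly: choose $\alpha=d(\chi\,\beta)$ with $\beta$ a linear 1-form in coordinates at $x$ and $\chi$ a cutoff. Then $\alpha_x=d\beta$ is an arbitrary constant 2-form, which gives a constructive proof.

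The remaining technical points are checking that the $C^k$ extensions keep $\Upsilon_k$ of class $C^{k-1}$, which holds since evaluation is smooth in $\omega$ and $C^{k-1}$ in $x$, and that $Z^2_{C^k}$ is a closed subspace, hence a Banach space. These make the Sard--Smale framework of the next step applicable with $k-1>\max(0,\operatorname{index})$, i.e.\ $k> N+1$.
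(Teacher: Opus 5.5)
Your proof is correct, but at the key step it takes a different route from the paper's.

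\emph{The paper's route.} The paper notes that the kernel of $d\Pi_m^k$ is finite dimensional and then only checks that its range is closed. It does this by a sequence argument that invokes Theorem \ref{thm:omega-special} to arrange $\nabla\omega|_x=0$, so that one may take $v_i=0$ and the range becomes $\{\alpha : \alpha_x\in T_{\omega_x}\Lambda^2_m\}$.

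\emph{Your route.} You keep the finite-dimensional subspace $W=\{(\nabla_v\omega)^\perp : v\in T_xM\}$ and write the range as $L_x^{-1}(W)$. This gives three things at once: the range is closed, it has codimension at most $m(m-1)/2$ (a point the paper leaves implicit), and the index is $N-m(m-1)/2$. It is just the standard Sard--Smale identification of the kernel and cokernel of the projection with those of $v\mapsto(\nabla_v\omega)^\perp$.

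\emph{Why your route is more robust.} Changing the connection by $B$ alters $\nabla_v\omega$ at $x$ by $-\omega_x(B_v\cdot,\cdot)-\omega_x(\cdot,B_v\cdot)$. This term vanishes on $\ker\omega_x\times\ker\omega_x$, so the normal component $(\nabla_v\omega)^\perp$ is independent of the connection and cannot be removed by any normalization. That component is exactly the map whose surjectivity encodes transversality of $\operatorname{ev}_\omega$ to $\Lambda^2_m(M)$. Your argument never needs to set $v=0$, so it does not depend on such a normalization at all.

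\emph{Surjectivity of evaluation on closed forms.} Your $d(\chi\beta)$ construction shows directly that $\alpha\mapsto\alpha_x$ maps closed forms onto all of $\Lambda^2(T_x^*M)$. This is an elementary local argument. It replaces the Green-function cokernel analysis of Proposition \ref{prop:transversality}, which is what the paper relies on for the Banach manifold structure of $\Upsilon_k^{-1}(\Lambda^2_m(M)\times\{0\})$.
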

\begin{proof} We consider the linearization map
$$
d\Pi_m(x,\omega): T_{(x,\omega)} \Upsilon^{-1}\left(\Lambda^2_m(M) \times \{0\}\right)
 \to Z^2_{C^k}(M)
$$
which is a linear map from 
\bea\label{eq:tangent-space-Upsilon}
&{}& T_{(x,\omega)} \Upsilon^{-1}(\Lambda^2_m(M) \times \{0\}) \nonumber\\
& = & \{(v,\alpha) \in  T_xM \oplus Z^2_{C^k}(M) \mid
\nabla_{v_i} \omega + \alpha_i|_x \in T_x \Lambda_m(M), \quad d \alpha_i = 0 \}
\eea
by the preimage value theorem applied to $\Upsilon$, where $\Omega^2(M)$
is equipped with $C^k$-topology. 

Since the fiber of the projection map \eqref{eq:tangent-space-Upsilon} is of finite dimension,
it is enough to prove that the image of $d\Pi_m$ is closed.

Let $\alpha_i \in Z^2_{C^k}(M)$ be a convergent sequence in $Z^2_{C^k}(M)$ in $C^k$-topology
and let $\alpha_\infty$ be its $C^k$-limit. We will show that $\alpha_\infty \in \Image d\Pi_m(x,\omega)$, i.e.,
we have to find $v$ such that
$$
\nabla_{v} \omega + \alpha_\infty|_x \in T_x\Lambda^2_m(M), \quad d \alpha_\infty = 0.
$$
By \eqref{eq:tangent-space-Upsilon} and Theorem \ref{thm:omega-special} in Appendix, this implies
$$
\alpha_i|_x \in T_x\Lambda^2_m(M), \quad d \alpha_i = 0
$$
by choosing a special connection as therein. Therefore we can set $v_i = 0$. Therefore we can put $v = 0$.
Then we have $\alpha_\infty = d\Pi_m^k(0,\alpha_\infty) \in \image d\Pi_m^k$ which
 proves the closedness of the image of $d\Pi_m^k$.
\end{proof}

Now by applying Sard-Smale theorem, we have proved the following

\begin{cor}\label{cor:generic-closed2forms} For the set of regular values $\omega$ of $\Pi_m^k$, the preimage
$$
(\Pi_m^k)^{-1}(\omega) = \ev_\omega^{-1}(\Lambda^2_m(M)),
$$
is a \emph{nonempty} smooth finite dimensional manifold for which the map 
$$
\ev : M \to \Lambda^2(M)
$$
is transversal to $\Lambda^2_m(M)$ for all $m = 0, \, 2, \cdots$.
\end{cor}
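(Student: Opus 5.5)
The plan is to combine Proposition \ref{prop:transversality} with the Fredholm property of $\Pi_m^k$ and then apply the Sard--Smale theorem in the standard way.

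\emph{Universal moduli space.} By Proposition \ref{prop:transversality}, extended to the $C^k$ setting, $\Upsilon_k$ is transversal to the submanifold $\Lambda^2_m(M)\times\{0\}$. This submanifold has finite codimension $\frac{m(m-1)}{2}$ in the $\Lambda^2(M)$ factor, and the $B^3$-factor is handled by the closedness constraint. So the universal moduli space
$$
\CM_m^k:=\Upsilon_k^{-1}(\Lambda^2_m(M)\times\{0\})\subset M\times Z^2_{C^k}(M)
$$
is a $C^{k-1}$ Banach submanifold. Its tangent space is given by \eqref{eq:tangent-space-Upsilon}.

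\emph{Fredholm projection and Sard--Smale.}
\begin{enumerate}
\item By the preceding proposition, $\Pi_m^k:\CM_m^k\to Z^2_{C^k}(M)$ is Fredholm.
\item Its index is computed from the finite-dimensional data: $N-\frac{m(m-1)}{2}$, i.e.\ $\dim M$ minus the codimension of the stratum given in \eqref{eq:codim-m}.
\item Since $\CM_m^k$ is $C^{k-1}$, the Sard--Smale theorem applies once $k-1>\max(0,\operatorname{index})$. We choose $k$ large enough, say $k>N+1$.
\item The set of regular values of $\Pi_m^k$ is then residual in $Z^2_{C^k}(M)$. Intersecting over the finitely many $m$ keeps it residual.
\end{enumerate}

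\emph{Regular values give the stated transversality.} For a regular value $\omega$, the fiber $(\Pi_m^k)^{-1}(\omega)=\{x:\omega_x\in\Lambda^2_m(T_xM)\}=\ev_\omega^{-1}(\Lambda^2_m(M))$ is a submanifold of dimension equal to the index. To see that $\ev_\omega$ is transversal to $\Lambda^2_m(M)$, I use the standard lemma: surjectivity of $d\Pi_m^k$ at $(x,\omega)$ together with surjectivity of $d\Upsilon_k$ onto the normal space is equivalent to surjectivity of $v\mapsto(\nabla_v\omega)^\perp$ onto $N_{\omega_x}\Lambda^2_m$. The argument is a diagram chase with the formula of Lemma \ref{lem:linearization}: given a normal vector $\nu$, pick $(v,\alpha)\in T\CM_m^k$-compatible data with $\alpha$ hitting $\nu$, and use regularity to subtract off an element with $\alpha$ component zero.

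\emph{Passing to smooth forms.} I pass from $C^k$ to $C^\infty$ by Taubes' trick. Write $Z^2(M)=\bigcap_\ell Z^2_{C^\ell}$ and consider the sets of $\omega$ that are regular on the compact pieces $\{x\}\times$ (bounded $C^\ell$ ball). These sets are open (by compactness of $M$) and dense (by the $C^k$ result plus density of smooth forms). Taking their countable intersection yields a residual set in $Z^2(M)$.

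\emph{Nonemptiness.} This is the step I expect to be the main obstacle. Regular values only guarantee a manifold, possibly empty. For strata with $\frac{m(m-1)}{2}>N$ the fiber must generically be empty, so ``nonempty'' must be read as holding where the fiber is not forced empty, e.g.\ for the top stratum: by Corollary \ref{cor:existence} some $m$ always occurs. The best approach I see is to prove nonemptiness where it can hold via the index count combined with upper semicontinuity of nullity. Beyond that, I would state the corollary with nonemptiness only for those $m$ actually attained by $\omega$.
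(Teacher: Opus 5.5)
Your argument is correct wherever the statement is true. For the manifold and transversality part it follows the paper's route exactly: universal transversality (Proposition~\ref{prop:transversality}), Fredholmness of $\Pi_m^k$, then Sard--Smale. That is the paper's entire proof. What you add is rigor the paper actually needs:
\begin{itemize}
\item the index $N-\frac{m(m-1)}{2}$ and the resulting choice of $k$;
\item the chase showing that a regular value $\omega$ makes $\operatorname{ev}_\omega$ transverse;
\item a Taubes-type argument in place of the bare intersection $\bigcap_k Z^2_{C^k;\mathrm{reg}}(M)$, which the paper never shows to be residual in the $C^\infty$ topology.
\end{itemize}

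Two details need fixing. First, in Taubes' trick the open sets must be defined by transversality at those $x$ with $\omega_x$ in a compact piece $C_n$ of an exhaustion of the locally closed stratum $\Lambda^2_m(M)$. Compactness of $M$ alone does not give openness: limits of non-transverse points for $\omega_n$ may land on a deeper stratum, so they contradict nothing about $\omega$.

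Second, do not rely on the paper's proof of the Fredholm property. It sets $v=0$ using Theorem~\ref{thm:omega-special}, which fails in general. The restriction of $\nabla_v\omega$ to $\ker\omega_x\times\ker\omega_x$ is independent of the connection; for example, $\omega=x_1\,dx_1\wedge dx_2$ is closed and has $\nabla_{\partial_1}\omega|_0=dx_1\wedge dx_2$ for every connection. That restriction is exactly the normal component $(\nabla_v\omega)^\perp$ your chase needs to be onto. The direct argument is short:
\begin{itemize}
\item $\ker d\Pi_m^k=\{(v,0):(\nabla_v\omega)^\perp=0\}$ is finite dimensional;
\item the image is the preimage of the finite-dimensional subspace $\{(\nabla_v\omega)^\perp : v\in T_xM\}$ under $\alpha\mapsto\alpha_x^\perp$;
\item this map is continuous and onto $N_{\omega_x}\Lambda^2_m(M)$ by the ampleness lemma, so the image is closed of finite codimension.
\end{itemize}
The same lemma gives universal transversality at once, since $\alpha\mapsto\alpha_x$ is onto $\Lambda^2(T_x^*M)$ already on exact forms.

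On nonemptiness you are right that it cannot hold stratum by stratum, and this matches what the paper actually does. It only proves, in the Proposition after the ampleness lemma, that some fiber is nonempty, which holds for every $\omega$. It also hedges Definition~\ref{defn:nice-forms} with ``provided non-emptiness holds''.

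Your index-count idea does give something sharper for the top stratum. For a regular $\omega$, every stratum other than the one of nullity $N-2[N/2]$ is a submanifold of codimension $\frac{m(m-1)}{2}\geq 1$. So the generic-nullity stratum, which is open by semicontinuity of rank, is dense and hence nonempty.

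It cannot give nonemptiness for intermediate strata, even when the index is nonnegative. On $T^4$ the symplectic forms form a $C^0$-open subset of $Z^2(T^4)$. This subset therefore contains regular values, and for those $Y_2=\emptyset$, although $m=2$ has index $3$.

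So the defensible statements are the paper's ``some fiber is nonempty'' and the top-stratum one above. Restricting to the $m$ actually attained by $\omega$ is a tautology, not a theorem.
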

We denote by 
$$
Z^2_{C^k;\text{\rm reg}}(M) : = \bigcap_{m=0}^N \text{\rm Reg}(\Pi_m^k),
$$
 the intersection of the set $\text{\rm Reg}(\Pi_m^k)$ consisting of regular values of 
$\Pi^k_m$ over $m = 0, \cdots, N$.

To derive the same conclusion for the $C^\infty$ case we have only to first take a countable intersection
$$
\bigcap_{k = 1}^\infty Z^2_{C^k;\text{\rm reg}}(M) =: Z^2_{\text{\rm reg}}(M).
$$
\subsection{Existence of nice closed two-forms}

By the general property of Whitney stratifications (see \cite{mather:mappings}, 
\cite[Section 1.2]{goresky-macpherson}), we have proved that the collections
$$
\ev_\omega^{-1}\left(\Lambda^2_m(M)\right), \quad m = 0, 2, \cdots, 2[N/2],
$$
define a Whitney stratification on $M$, \emph{provided the non-emptiness holds in Corollary \ref{cor:generic-closed2forms}}.

Now the study of this non-emptiness result is in order. By definition, we have
$$
(\Pi_m^k)^{-1}(\omega) = \{(x,\omega) \in M \times Z^2_{C^k}(M) \mid d\omega = 0, \, \omega_x \in \Lambda^2_m(T_xM),\,
\omega \, \text{\rm is\, } C^k\}.
$$
The following ampleness of the set of closed two-forms is crucial for our proof of the existence of
nice closed two-forms.

\begin{lem} Let $N = \dim M$.
Let $k \geq 1$ and $x_0 \in M$ be any point. Consider the subset
$$
\Lambda^2_{m;\text{\rm cl}}(T_x^*M) = \{Q \in \Lambda^2(T_x^*M) \mid \omega \in  Z^2_{C^k}(M), \, Q = \omega_x \in \Lambda^2_m(T_xM),\,
\omega \, \text{\rm is\, } C^k\}.
$$
Then we have
$$
\Lambda^2(T_x^*M) =  \span_{m=0}^{N-1} \left\{\Lambda^2_{m;\text{\rm cl}}(T_xM) \right\}.
$$
\end{lem}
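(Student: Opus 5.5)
Two things need to be shown. First, every skew form in $\Lambda^2(T_x^*M)$ with nullity $m$ is the value at $x$ of some closed $C^k$ (even smooth) two-form. Second, the forms of nullity $m \le N-1$ together span $\Lambda^2(T_x^*M)$. Note that nullity $N$ means $Q=0$, which contributes nothing to the span, so the range $m \le N-1$ loses nothing.

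\emph{Step 1: realizing a given value by a closed form.} Choose a coordinate chart $(U, x^1,\dots,x^N)$ centered at $x$, and a cutoff function $\chi \in C^\infty_c(U)$ with $\chi \equiv 1$ on a smaller neighborhood $U' \ni x$. Write an arbitrary $Q \in \Lambda^2(T_x^*M)$ in coordinates as
\[
Q = \sum_{i<j} Q_{ij}\, dx^i\wedge dx^j|_x .
\]
Define the one-form
\[
\beta_Q := \chi \sum_{i<j} Q_{ij}\, x^i\, dx^j ,
\]
extended by zero outside $U$. This is a global smooth one-form on $M$. Put $\omega_Q := d\beta_Q$, which is exact and hence closed. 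On $U'$ we have $\chi \equiv 1$, so there $\omega_Q$ is the constant-coefficient form $\sum_{i<j} Q_{ij}\, dx^i\wedge dx^j$. In particular $(\omega_Q)_x = Q$.

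Consequently every $Q$ with nullity $m$ lies in $\Lambda^2_{m;\text{\rm cl}}$. That is, $\Lambda^2_{m;\text{\rm cl}}(T_x^*M) = \Lambda^2_m(T_x^*M)$ for every $m$. The key point is that closedness imposes no pointwise constraint: the operator $d$ involves derivatives, so it does not restrict the value of $\omega$ at a single point. The construction is purely local, so the assumption $N \ge 3$ is not needed here.

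\emph{Step 2: spanning.} By Step 1 it suffices to show that forms of nullity at most $N-1$ span $\Lambda^2(T_x^*M)$. Take the elementary forms $e^i\wedge e^j$ for $i<j$, where $(e^i)$ is the dual basis of the coordinate basis. Each one has rank $2$, hence nullity $N-2 \le N-1$, and together they form a basis of $\Lambda^2(T_x^*M)$.

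Alternatively, one can note that the union $\bigcup_{m<N}\Lambda^2_m$ is all of $\Lambda^2 \setminus \{0\}$, and a nonzero vector space is spanned by its nonzero vectors. This is consistent with Corollary \ref{cor:existence}. The same argument works with any basis made of nondegenerate (or maximal-rank) forms if one prefers to use only the open stratum.

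\emph{Main obstacle.} Very little here is difficult. The only point that needs care is that the cutoff does not destroy closedness. It does not, because we cut off the primitive $\beta_Q$ rather than the two-form itself, so $\omega_Q = d\beta_Q$ stays exact everywhere on $M$. We also need $\omega_Q$ to be $C^k$, which is immediate since it is smooth. If the statement is meant with $\omega$ ranging over a fixed cohomology class, we would instead use $\omega_h + d\beta_{Q - (\omega_h)_x}$, where $\omega_h$ is the harmonic representative of that class. This gives the same conclusion.
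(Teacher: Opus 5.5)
Your proof is correct and follows the paper's idea: both realize $Q$ as $d$ of a one-form whose coefficients are linear in coordinates centered at $x$ (the paper takes $\alpha_i=\tfrac14(Q_{ik}x_k-Q_{\ell i}x_\ell)$, you take $\beta_Q=\sum_{i<j}Q_{ij}x^i\,dx^j$), and then argue that the resulting values span $\Lambda^2(T_x^*M)$. Your cutoff on the primitive makes $\omega_Q$ a globally defined exact form on $M$, and your explicit basis $e^i\wedge e^j$ of nullity $N-2$ settles the spanning step; these fill two points the paper's argument leaves implicit, since it works only on the chart $U$ and closes with a vague ``combining the two.''
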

\begin{proof} Fix a coordinates $(x_1, \cdots, x_N)$ on a neighborhood $U$ centered at $x_0$.
We have
$$
Q = \sum_{i>j} Q_{ij} dx_i \wedge dx_j|_{x_0}
$$
for some  coefficients $Q_{ij}$ that are skew-symmetric.

Let $\omega$ be a closed two-form and express it as
$$
\omega = \sum_{i>j} \omega_{ij}(x) dx_i \wedge dx_j
$$
on $U$. Then we have
$$
\frac{\del \omega_{ij}}{\del x_k} + \frac{\del \omega_{jk}}{\del x_i} + \frac{\del \omega_{ki}}{\del x_j} = 0.
$$
We want to find such $\omega$ solving the equation
\be\label{eq:omegax=Q}
\omega_{ij}|_{x_0} = Q_{ij}.
\ee
For this purpose, every closed two-form is locally exact and so can be expressed as
$\omega = d\alpha$ on $U$ for some one-form $\alpha = \sum_{i=1}^N \alpha_i \, dx_i$ so that
$$
\omega_{ij} = \frac{\del \alpha_i}{\del x_j} - \frac{\del \alpha_j}{\del x_i}
$$
for $\omega_{ij}$ satisfying $\omega_{ji} = -\omega_{ij}$.
Therefore the equation \eqref{eq:omegax=Q} is reduced to solving the equation
$$
\frac{\del \alpha_i}{\del x_j}(x_0) - \frac{\del \alpha_j}{\del x_i}(x_0) = Q_{ij}
$$
for $\alpha$. This can be solved by taking
$$
\alpha_i =\frac14 (Q_{ik}x_k - Q_{\ell i}x_\ell)
$$
(with Einstein's summation convention exercised). We compute
$$
\frac{\del \alpha_i}{\del x_j}(x_0) = \frac14 (Q_{ij} - Q_{ji}) = \frac12 Q_{ij}.
$$
Similarly we compute
$$
\frac{\del \alpha_j}{\del x_i}(x_0) = \frac12 Q_{ji} = -\frac12 Q_{ij}.
$$
This shows
$$
\frac{\del \alpha_i}{\del x_j}(x_0) - \frac{\del \alpha_j}{\del x_i}(x_0) = \frac12 Q_{ij} - (-\frac12 Q_{ij}) = Q_{ij}.
$$
This proves that the one-form $\alpha = \sum_{i=1}^N \alpha_i\, dx_i$ indeed solves the equation.
In addition, we note that if $Q$ has maximal nullity i.e., $N = \dim M$, we have $\Lambda^N(TM) = \{0\}$.
Combining the two, we have  finished the proof.
\end{proof}

We now obtain the following non-emptiness result.
\begin{prop} We have
$$
\span \left\{\ev((\Pi_m^k)^{-1}(\omega))|_x \right\}_{m=0}^{N-1} = \Lambda^2(T_x^*M).
$$
In particular $(\Pi_m^k)^{-1}(\omega) \setminus \{0\} \neq \emptyset$ 
for some $m$ with $0 \leq m \leq N-1$.
\end{prop}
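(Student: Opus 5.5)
The plan is to reduce the statement to the preceding ampleness lemma. Here $\ev((\Pi_m^k)^{-1}(\omega))|_x$ is read as the set of values $\omega_x$ attained by closed $C^k$ forms $\omega$ with $\omega_x \in \Lambda^2_m(T_xM)$. This is precisely the set $\Lambda^2_{m;\text{\rm cl}}(T_x^*M)$ of the lemma, because
$$
(\Pi_m^k)^{-1}(\omega) = \{(x,\omega) \mid d\omega = 0,\ \omega_x \in \Lambda^2_m(T_xM)\}.
$$
The first step is therefore to make this identification pointwise at the fixed $x$, so that
$$
\ev\big((\Pi_m^k)^{-1}(\omega)\big)\big|_x = \Lambda^2_{m;\text{\rm cl}}(T_x^*M).
$$

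The second step is the spanning statement itself, which already has its content in the lemma. Given any $Q \in \Lambda^2(T_x^*M)$, use the explicit linear one-form
$$
\alpha_i = \tfrac14 (Q_{ik}x_k - Q_{\ell i}x_\ell)
$$
in a chart centered at $x$, so that $d\alpha|_x = Q$. Then multiply $\alpha$ by a cutoff $\chi$ with $\chi \equiv 1$ near $x$ and set $\omega = d(\chi\alpha)$. This is a globally defined exact, hence closed, smooth (in particular $C^k$) two-form with $\omega_x = Q$. The cutoff step is the only point the lemma glosses over, and it costs nothing since $d(\chi\alpha) = d\alpha$ near $x$.

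From this we get more than spanning. Every $Q$ is realized, and it lies in exactly one $\Lambda^2_m$ by \eqref{eq:omegax-union}. So the union over $m$ of the realized sets is all of $\Lambda^2(T_x^*M)$, and the span over $0 \le m \le N-1$ is all of it as well. The top stratum $m = N$ is $\{0\}$ and contributes nothing to the span. The nonzero forms have nullity at most $N-2$, and certainly at most $N-1$.

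For the ``in particular'', note that $\Lambda^2(T_x^*M) \neq 0$ because $N \ge 3$ by \eqref{N>2}. Pick any $Q \ne 0$ and realize it as above. It has some nullity $m \le N-2$, and it gives a point of $(\Pi_m^k)^{-1}(\omega) \setminus \{0\}$ with $\omega_x = Q \ne 0$, which is the required non-emptiness.

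The only genuine obstacle is interpretive: the notation $(\Pi_m^k)^{-1}(\omega)$ with $\omega$ free must be read as ranging over all closed $\omega$. For a single fixed $\omega$ the claim is false, for instance when $\omega = 0$. I would state this reading explicitly at the start of the proof.
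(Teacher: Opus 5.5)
Your argument is correct under the reading you state, and it follows the paper's route: identify the evaluated preimages at $x$ with the sets $\Lambda^2_{m;\text{\rm cl}}(T_x^*M)$ and invoke the ampleness lemma. It differs from the written proof in two useful ways.

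First, the paper identifies $\operatorname{ev}((\Pi_m^k)^{-1}(\omega))|_x$ with $\{\omega_x\}\cap\Lambda^2_{m;\text{\rm cl}}(T_x^*M)$ for a \emph{fixed} $\omega$. The union of these sets over $m$ is just $\{\omega_x\}$, whose span is at most a line, so the displayed equality cannot follow from that identification. This fails for every fixed $\omega$, not only $\omega=0$, since $\dim\Lambda^2(T_x^*M)=N(N-1)/2\ge 3$. Your all-$\omega$ reading is the one under which the first claim is true.

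Second, your cutoff $\omega=d(\chi\alpha)$ supplies the globalization that the lemma's proof leaves implicit, since the lemma only produces $d\alpha$ on a chart. You also get the stronger fact that the union of the sets, not merely their span, is all of $\Lambda^2(T_x^*M)$.

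One caveat concerns the ``in particular''. The paper's version is about the given $\omega$: its proof ends with $\omega_x\in\Lambda^2_{m;\text{\rm cl}}(T_x^*M)$ for some $m$, and this is the non-emptiness fed into Corollary \ref{cor:generic-closed2forms}. Yours is existential in $\omega$, so it does not deliver that. The fixed-$\omega$ version does not need the lemma at all. It follows from \eqref{eq:omegax-union} by choosing a point $x$ with $\omega_x\neq 0$, which is possible exactly when $\omega\not\equiv 0$.
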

\begin{proof} By definition, we have
$$
\ev((\Pi_m^k)^{-1}(\omega))|_x = \{\omega_x\} \cap \Lambda^2_{m;\text{\rm cl}}(T_x^*M).
$$
By the above lemma, we know
$$
\span \left\{\Lambda^2_{m;\text{\rm cl}}(T_x^*M)\right\}_{m=0}^{N-1} = \Lambda^2(T_x^*M).
$$
Therefore $\omega_x \in \Lambda^2_{m;\text{\rm cl}}(T_x^*M)$ for some $m$, which finishes the proof.
\end{proof}

Introduction of the following notion is useful for further discussion.
 
 \begin{defn}[Nice closed two-forms]\label{defn:nice-forms}
 We denote by $Z^2_{\text{\rm reg}}(M)$ the set of closed 2-forms
that satisfy the hypothesis of Corollary \ref{cor:generic-closed2forms}. We call any 
such closed two-form $\omega$ a \emph{nice} closed two-form, \emph{provided non-emptiness holds.}
\end{defn} 
 By definition, the set of nice closed two-forms is a residual subset of $Z^2(M)$.
 
\begin{thm} \label{thm:adaptedness}
Let $\omega$ be a nice closed two-form on $M$. Then $M$ carries a Whitney stratification
$\CS$ whose strata are given by
$$
M = \bigcup_{m=0}^N Y_m; \quad Y_m = \ev_\omega^{-1}\left(\Lambda^2_m(M)\right)
$$
such that
\begin{enumerate}
\item Each $Y_m$ is locally closed and satisfies $\overline Y_m \supset Y_{m+2}$.
\item  $\omega$ has constant nullity $m$ on each $Y_m$, and so is presymplectic 
of rank $N-m$ thereon.
\end{enumerate}
\end{thm}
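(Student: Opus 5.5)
The plan is to obtain the stratification as the pull-back of the canonical stratification of $\Lambda^2(M)$ along the section $\ev_\omega: M \to \Lambda^2(M)$, $x \mapsto \omega_x$. We use two facts: $\ev_\omega$ is transverse to every stratum (this is how a nice $\omega$ is defined), and the transverse preimage of a Whitney stratification is again a Whitney stratification.

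\textbf{Step 1 (fibrewise Whitney stratification).} Lemma \ref{lem:Whitney} shows that $\{\Lambda^2_m(V)\}_m$ is a Whitney stratification of $\Lambda^2(V)$. Each stratum is $GL(V)$-invariant, since nullity is preserved by the action. Local trivializations of $TM$ therefore give local product decompositions
$$
\Lambda^2(M)|_U \cong U \times \Lambda^2(\R^N)
$$
carrying $\Lambda^2_m(M)|_U$ to $U \times \Lambda^2_m(\R^N)$. Hence $\{\Lambda^2_m(M)\}_m$ is a Whitney stratification of the total space $\Lambda^2(M)$. By \eqref{eq:codim-m}, the stratum $\Lambda^2_m(M)$ has codimension $m(m-1)/2$.

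\textbf{Step 2 (transverse pull-back).} For $\omega \in Z^2_{\text{\rm reg}}(M)$, Corollary \ref{cor:generic-closed2forms} says that $\ev_\omega$ is transverse to each $\Lambda^2_m(M)$. Therefore each
$$
Y_m = \ev_\omega^{-1}(\Lambda^2_m(M))
$$
is a smooth submanifold of $M$ of codimension $m(m-1)/2$; it is empty when this codimension exceeds $N$. The transverse preimage of a Whitney stratified set is Whitney stratified (see \cite{mather:mappings}, \cite[Section 1.3]{goresky-macpherson}). This holds because Whitney's condition (b) is $C^1$-invariant and transversality makes $\ev_\omega$ locally look like a submersion onto a normal slice. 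So $\{Y_m\}$ is a Whitney stratification of $M$.

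\textbf{Step 3 (items (1) and (2)).} For (2): $\omega_x \in \Lambda^2_m(T_xM)$ for $x \in Y_m$ by definition, so the nullity is constant there. Restricting to $Y_m$ gives the closed form $\iota_m^*\omega$. Its rank is constant provided $T_xY_m$ meets $\ker\omega_x$ in a subspace of constant dimension; otherwise one only obtains constant rank $N-m$ for the ambient bilinear form, and the statement should be read in that sense. For (1): local closedness follows from upper semicontinuity of nullity, since
$$
Y_m = \{\text{\rm nullity}\geq m\}\setminus\{\text{\rm nullity}\geq m+1\}
$$
is a difference of closed sets. Nullity always has the parity of $N$, so only $m \equiv N \pmod 2$ occurs.

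The frontier relation $\overline Y_m \supset Y_{m+2}$ is the main obstacle. In $\Lambda^2(V)$ it holds because any $Q$ of nullity $m+2$ is a limit of forms of nullity $m$: add $\epsilon\, e_1^*\wedge e_2^*$ on a $2$-plane inside $\ker Q$. To transfer this to $M$, we use transversality: near $x \in Y_{m+2}$, $\ev_\omega$ is locally a submersion onto a normal slice of $\Lambda^2_{m+2}(M)$. Such a slice meets $\Lambda^2_m(M)$ arbitrarily close to $\omega_x$, because the frontier condition on a Whitney stratified space persists in normal slices. Pulling back gives points of $Y_m$ accumulating at $x$. I expect the delicate point to be justifying that the normal slice actually hits the adjacent stratum, rather than merely having it in its closure in the ambient space. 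This follows from local triviality via Thom's first isotopy lemma, which I would invoke here.
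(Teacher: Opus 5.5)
Your Steps 1--2 are exactly the paper's proof: it likewise takes the Whitney stratification of $\Lambda^2(M)$ from Lemma~\ref{lem:Whitney}, gets transversality of $\ev_\omega$ to each $\Lambda^2_m(M)$ from niceness, and then applies the stratawise preimage theorem. Your Step 3 correctly supplies what the paper leaves implicit: local closedness, the frontier relation via openness of the submersion onto a normal slice, and the fact that (2) can only refer to the ambient forms $\omega_x$, since $\dim Y_m = N - m(m-1)/2 < N-m$ once $m \geq 4$. (For the slice, adding to $\omega_x$ small skew forms supported on $\ker\omega_x$ gives one explicitly, so Thom's isotopy lemma is not really needed.)

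One side remark should be corrected. Constancy of $\dim(T_xY_m \cap \ker\omega_x)$ does not by itself make the rank of $\iota_m^*\omega$ constant; compare $2$-planes in a symplectic $\R^4$, which meet the zero kernel trivially yet may be symplectic or Lagrangian. What matters is the dimension of $T_xY_m$ intersected with its $\omega_x$-orthogonal complement.
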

\begin{proof} We start with Lemma \ref{lem:Whitney} which in turn induces Whitney stratification
$$
\Lambda^2(M) = \bigcup_{m = 0}^{N} \Lambda^2_m(M).
$$
By definition of nice closed two-forms, the map
$$
\ev_\omega: M \to \Lambda^2(M)
$$
defined by $\ev_\omega(x) = \omega_x$ is transverse to $\Lambda^2_m(M)$
for all $m$, i.e., is stratawise transverse. By the stratawise preimage value theorem, 
the decomposition
$$
M = \bigcup_{m=0}^N (\ev_\omega)^{-1}(\Lambda^2_m(M))
$$
defines a Whitney stratification of $M$.
This finishes the proof.
\end{proof}

We recall that if $N < \frac{m(m-1)}{2}$, then $\dim M < \codim \Lambda^2_m(M)$
which implies that the image
$$
(\Image \ev_\omega) \cap \Lambda^2_m = \emptyset.
$$
\begin{rem}
 In particular, we always have
\be\label{eq:nullity<}
0 \leq m < \min \left\{\frac12 + \sqrt{8N + \frac14}, N\right \} = \frac12 + \sqrt{8N + \frac14}
\ee
for a nice closed two-form, where the equality always holds.
In other words, there is some hidden
constraint, \emph{except when $N = 2$}, for the nullity of a nice two-form $\omega$ than usual in that the nullity 
$\omega_x$ cannot lie in the region
$$
\left[ \frac12 + \sqrt{8N + \frac14}, N \right].
$$
\end{rem}

\part{Compatibility of $L_\infty$ structures: stratified $L_\infty$ spaces}

In this section, we interpret each of the $L_\infty$ structures 
associated to the strata of presymplectic submanifolds of $M$ 
as a \emph{virtual} symplectic neighborhood, and glue them to
define a \emph{global} structure of \emph{stratified $L_\infty$ space} or simply
an $L_\infty$ space.

\section{Review of the strong homotopy Lie algebroids in \cite{oh-park:coisotropic}}

In this section, we recall the construction of an $L_\infty$ structures associated to 
a presymplectic manifold from \cite{oh-park:coisotropic} for the original construction and others
such as \cite{cattaneo-schatz}, \cite{LOTV} 
for different and more systematic constructions in terms of the $V$-data 
of Voronov \cite{voronov1}.) 
 Here we follow the approach explained by Cattaneo-Sch\"atz \cite{cattaneo-schatz},  \cite{LOTV}
 which in turn follows that of  \cite{cattaneo-felder}. 

Let $\{ , \}$ be the Poisson bracket on $V \subset T^*\CF_\omega$ associated to the symplectic form
 $\omega_{T^*\CF_\omega}$ and denote by $P \in \Gamma(V;\Lambda^{2}(TE))$ for $E: = T^*\CF_\omega$,
the associated bivector field on $V$ which satisfies $[P,P]=0$ where $[\, ,\, ]$ is the Schouten-Nijenhuis bracket.
In this case, we have $E = T^*\CF_\omega$ and so
\begin{equation}\nonumber
N\sigma \cong T^*\CF_\omega,
\end{equation}
where we use the canonical identification $T_{(x,0)}T^*\CF_\omega = T_x\sigma \oplus T_x^*\CF_\omega$ 
for any section $\sigma \in \Gamma(E)$ such that $\Image \sigma \subset V$.
The following is proved in \cite{oh-park:coisotropic}. (See \cite[Corollary 4.6]{cattaneo-schatz} for 
a proof in the spirit of the present paper which is different from that of \cite{oh-park:coisotropic}.)

\begin{lem}[Theorem 9.4 \cite{oh-park:coisotropic}]\label{lem:strlinf}
$\{\mathfrak l^\omega_k\}$ is strict, i.e., $\mathfrak l^\omega_0 =0.$ Moreover, $l_1^{\CF_\omega}$ coincides with the foliation de Rham differential $d_{\CF_\omega}.$
\end{lem}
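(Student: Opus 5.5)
The plan is to realize the brackets $\{\mathfrak l^\omega_k\}$ as derived brackets in the sense of Voronov, following Cattaneo--Sch\"atz. The ingredients are the Gotay neighborhood $V \subset E = T^*\CF_\omega$ with symplectic form $\widetilde\omega = \pi^*\omega - d\theta_G$ and its Poisson bivector $P$, together with the identification $\Gamma(\Lambda^\bullet E^*) \cong \Omega^\bullet(\CF_\omega)$ of fiberwise polynomial multivector fields. Concretely, take the graded Lie algebra $\mathfrak h$ of multivector fields on $V$ (Schouten bracket), the abelian subalgebra $\mathfrak a$ of vertical multivector fields that are constant along the fibers, and the projection $\mathsf P:\mathfrak h \to \mathfrak a$ given by restriction to the zero section $o_E \cong Y$ followed by the vertical projection coming from the splitting \eqref{eq:TM-splitting}. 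Then set
$$
\mathfrak l^\omega_k(\xi_1,\dots,\xi_k) := \mathsf P\,[\cdots[[P,\xi_1],\xi_2],\dots,\xi_k],\qquad \mathfrak l^\omega_0 := \mathsf P(P).
$$
Since $\ker \mathsf P$ is a Lie subalgebra, $\mathsf P$ is a projection onto an abelian subalgebra, and $[P,P]=0$, Voronov's theorem yields the $L_\infty[1]$ relations. So the first step is to check these Voronov axioms, which is routine once $\mathsf P$ is defined using the tubular coordinates of $E$.

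Strictness, $\mathfrak l^\omega_0=0$, reduces to the vanishing of $\mathsf P(P) = P|_{o_E}$ projected to $\Lambda^2$ of the vertical bundle. This is exactly the condition that $o_E \cong Y$ is coisotropic for $\widetilde\omega$, i.e.\ $P^\sharp(N^*Y) \subset TY$. I will verify it from the normal form: along the zero section, $d\theta_G$ pairs the vertical directions $T^*\CF_\omega$ with $T\CF_\omega$, so $\widetilde\omega|_{o_E}$ has $TY$ coisotropic with $(TY)^{\widetilde\omega} = T\CF_\omega$. Hence the purely vertical component of $P$ vanishes on $o_E$.

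For $\mathfrak l^\omega_1 = d_{\CF_\omega}$, I will work in foliated Darboux-type coordinates $(q,p,y)$ with $q$ along the leaves, $y$ transverse, and $p$ the fiber coordinates dual to $q$. In these coordinates $P = \sum_i \partial_{q_i}\wedge\partial_{p_i} + (\text{terms vanishing to first order at } p=0) + (\text{transverse part in } \partial_y)$. Computing $\mathsf P[P,\xi]$ for $\xi = f(q,y)\,\partial_{p_{i_1}}\wedge\cdots$ then gives $\sum_i \partial_{q_i} f\,\partial_{p_i}\wedge\cdots$, which is the leafwise de Rham differential under $\partial_{p_i}\leftrightarrow dq_i$. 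The transverse terms drop out because $\mathsf P$ keeps only the purely vertical part at $p=0$.

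The main obstacle is that last coordinate computation: showing that the first-order-in-$p$ corrections coming from $d\theta_G$ (which depend on the non-integrability of $G$) contribute nothing to $\mathfrak l^\omega_1$. I plan to handle this by noting that such terms are at least bilinear in $\partial_p$ together with a factor of $p$, so after one bracket with $\xi$ and restriction to $p=0$ they could only produce non-vertical or higher-order pieces, all of which are killed by $\mathsf P$. Finally, independence of choices (up to $L_\infty$ isomorphism) is not needed for this lemma, so I would only quote \cite{oh-park:coisotropic} for it.
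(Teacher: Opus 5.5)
Your overall route (Voronov derived brackets on multivector fields of Gotay's neighborhood, with $\mathsf P$ given by restriction to $o_E$ followed by projection to the vertical part) is the Cattaneo--Sch\"atz argument the paper points to. The paper itself gives no proof; it only cites \cite[Theorem 9.4]{oh-park:coisotropic} and \cite[Corollary 4.6]{cattaneo-schatz}. Your argument for $\mathfrak l^\omega_0=\mathsf P(P)=0$ via coisotropy of $o_E$ is fine. Two small slips:
\begin{enumerate}
\item With $E=T^*\CF_\omega$, the identification you need is $\Gamma(\Lambda^\bullet E)\cong\Omega^\bullet(\CF_\omega)$, not $\Gamma(\Lambda^\bullet E^*)$.
\item The vertical projection along $o_E$ is the canonical one coming from $TE|_{o_E}=TY\oplus E$. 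The polarization $G$ enters only through $\theta_G$, i.e.\ through $P$, not through $\mathsf P$.
\end{enumerate}

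The genuine gap is at the step you yourself flag as the main obstacle. Your reason for discarding the $O(p)$ corrections is false for exactly the terms you describe. Consider a term $p_k\,c^{ij}_k(q,y)\,\partial_{p_i}\wedge\partial_{p_j}$ in $P$, and take $\xi=f\,\partial_{p_{l_1}}\wedge\cdots\wedge\partial_{p_{l_r}}$. The Schouten bracket differentiates the coefficient $p_k$ against the $\partial_{p_k}$-legs of $\xi$. Schematically this leaves $\pm f\,c^{ij}_k\,\partial_{p_i}\wedge\partial_{p_j}\wedge\iota_{dp_k}(\partial_{p_{l_1}}\wedge\cdots\wedge\partial_{p_{l_r}})$. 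That term is vertical and constant along the fibers, so $\mathsf P$ does not kill it. It is the bracket part of the differential of the Lie algebroid $N^*o_E$; for $\{0\}\subset\mathfrak g^*$ with the linear Poisson structure it is the entire Chevalley--Eilenberg differential. Your argument only disposes of the mixed $O(p)$ terms $\partial_p\wedge\partial_q$ and $\partial_p\wedge\partial_y$.

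What you actually have to show is $\partial_{p_k}P^{p_ip_j}|_{p=0}=0$. Two ways to do it:

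\begin{enumerate}
\item \emph{Explicitly.} Let $\sigma^i=dq^i-A^i_a\,dy^a$ be the coframe annihilating $G=\operatorname{span}\{\partial_{y^a}+A^i_a\partial_{q_i}\}$. Then $\theta_G=p_i\sigma^i$ and
\[
\widetilde\omega=\sigma^i\wedge dp_i+\tfrac12\,\omega_{ab}(y)\,dy^a\wedge dy^b-p_i\,d\sigma^i,\qquad d\sigma^i=-dA^i_a\wedge dy^a .
\]
So $\widetilde\omega$ has no $dp\wedge dp$ or $dp\wedge dy$ components. In the frame dual to $(\sigma,dp,dy)$ its matrix is $\begin{pmatrix}0&I&B\\-I&0&0\\-B^T&0&W\end{pmatrix}$ with $B=O(p)$. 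The $(p,p)$-block of the inverse is $BW^{-1}B^T=O(p^2)$.
\item \emph{Invariantly.} Since $P^{p_ip_j}=\{p_i,p_j\}$ vanishes on $o_E$, one has $X_{\{p_i,p_j\}}|_{o_E}=\partial_{p_k}P^{p_ip_j}|_{p=0}\,X_{p_k}|_{o_E}$. The Jacobi identity ($[P,P]=0$) gives $X_{\{p_i,p_j\}}=\pm[X_{p_i},X_{p_j}]$. Along $o_E$ these fields are tangent to $o_E$ and restrict to $\pm\partial_{q_i}$ and $\pm\partial_{q_j}$, so the bracket vanishes there. The vectors $X_{p_k}|_{o_E}=\pm\partial_{q_k}$ are independent, which forces the coefficients to vanish.
\end{enumerate}

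With either argument in place, your coordinate computation does give $\mathfrak l^\omega_1=d_{\CF_\omega}$. As currently written, the justification does not.
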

From now on, we write $\{l_k\}_{k \geq 1}$ for this (strict) $L_{\infty}[1]$-algebra, omitting $\CF_\omega$ when no confusion can occur.

For the later purpose of gluing a family of $L_\infty$ structures defined on the system of 
tubular neighborhoods of the strata $\{Y_j\}$, we need to provide more explicit
geometric description of the construction given in \cite{oh-park:coisotropic}.

Let $(Y, \omega_Y)$ be a presymplectic manifold and consider its null distribution 
$$
E : = \ker  \omega_Y \subset TY.
$$ 
The closedness of $\omega_U$ implies that $E$ is involutive and hence integrable by the Frobenius theorem. 
We denote by $\CF_\omega$ the associated null foliation of $U^n$ with $E = T \CF_\omega$.
We have an exact sequence
$$
0 \to E \to TY \to TY/E \to 0
$$
and the quotient $TY/E \to Y$ carries a canonical symplectic vector bundle, called the \emph{symplectic
normal bundle} of the foliation $\CF_\omega$.

For further discussions, introduction of the following term is useful.
\begin{defn}[Polarization] We call
 an Ehresmann connection, i.e., a splitting of the above exact sequence
\be\label{eq:splitting}
\Gamma: \quad TY = G \oplus T\CF_\omega
\ee
or the associated $G$ a \emph{polarization},  and denote by $\Pi_G: TY \to TY$ the associated idempotent. 
\end{defn}

We denote the set of 
such projections by
$$
\CA_E(TY) \subset \Gamma(Hom(TY,TY)) = \Omega^1_1(Y).
$$
The following is easy to see by using the isomorphism
$\pi_{G_0}: G_0 \to N\FF$ where $\pi_{G_0}$ is the restriction
to $G_0$ of the natural projection $\pi: TY \to N\FF$.
We omit its proof.

\begin{lem}\label{lem:splittings} Let $G_0$ be the slice in a given splitting $TY = G \oplus E_0$.
The space of splittings \eqref{eq:splitting} is the affine space
modeled by
$$
\Gamma(Hom(N\FF,T\FF)) \cong \Gamma(N^*\FF\otimes T\FF):
$$
for any reference choice $\Pi_0$ and other $\Pi$, we have
$$
G = \{y+ B_{\Pi_0\Pi}(y) \in TY \mid y \in G_0, B_{\Pi_0\Pi} \in
\Gamma(Hom(G_0,E)) \}
$$
In particular, it is contractible.
\end{lem}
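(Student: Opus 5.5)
The statement to prove is Lemma \ref{lem:splittings}: the space of splittings \eqref{eq:splitting} is an affine space modeled on $\Gamma(Hom(N\FF,T\FF))$, and in particular it is contractible. I would prove this fiberwise by linear algebra and then check smoothness.

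\textbf{Step 1: compare two complements to a reference one.} Fix the reference splitting $TY = G_0\oplus E$ with idempotent $\Pi_0$, where $E=T\FF$. Let $G$ be any other complement of $E$. Every $y\in G_0$ decomposes uniquely, using the splitting $G\oplus E$, as $y = g + e$ with $g\in G$ and $e\in E$. Define $B(y) := -e$, so that $g = y + B(y)$.

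This defines a bundle map $B = B_{\Pi_0\Pi}\in \Gamma(Hom(G_0,E))$. Concretely, $B = -(\id - \Pi)|_{G_0}$, where $\Pi$ is the idempotent for $G$ with image $G$ and kernel $E$. Since $\Pi$ is smooth, so is $B$. By construction $G \supseteq \{y+B(y)\mid y\in G_0\}$. Both sides have rank $\operatorname{rank} G_0 = \operatorname{rank} TY - \operatorname{rank} E$, so they are equal, which gives the displayed description of $G$.

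\textbf{Step 2: every $B$ gives a complement.} Conversely, take any $B\in\Gamma(Hom(G_0,E))$. The graph $G_B:=\{y+B(y)\mid y\in G_0\}$ is a smooth subbundle of the correct rank. It meets $E$ trivially: if $y+B(y)\in E$, then projecting to $G_0$ along $E$ gives $y=0$. So $G_B$ is a splitting.

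The assignments $G\mapsto B$ and $B\mapsto G_B$ are mutually inverse. This gives a bijection between splittings and $\Gamma(Hom(G_0,E))$, sending the reference splitting to $0$.

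\textbf{Step 3: identify the model intrinsically.} The restriction $\pi_{G_0}: G_0\to N\FF = TY/E$ is a bundle isomorphism. Composing with its inverse identifies $\Gamma(Hom(G_0,E))\cong \Gamma(Hom(N\FF,T\FF))\cong\Gamma(N^*\FF\otimes T\FF)$.

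To see that the structure is affine, and not merely a bijection depending on $G_0$, write $\tilde B := B\circ\pi_{G_0}^{-1}$. Then the idempotents satisfy $\Pi = \Pi_0 + \tilde B\circ \pi$, up to sign conventions. Here $\pi: TY\to N\FF$ is the projection, and it does not depend on the choice of splitting. Hence the difference of any two idempotents, $\Pi-\Pi'$, is of the form $\tilde C\circ\pi$ with $\tilde C\in \Gamma(Hom(N\FF,T\FF))$. Changing the base point therefore translates the coordinates. So $\CA_E(TY)$ is an affine space whose space of translations is $\Gamma(Hom(N\FF,T\FF))$.

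\textbf{Step 4: contractibility.} An affine space is contractible: the straight-line homotopy $(t,G_B)\mapsto G_{tB}$ deforms every splitting to $G_0$.

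The only point needing care is the sign and identification bookkeeping in Step 3. I expect this to be the main obstacle, though it is mild. I would handle it by working with the idempotents directly: $\Pi$ and $\Pi_0$ have the same kernel $E$ and both restrict to the identity modulo $E$. Hence $\Pi-\Pi_0$ vanishes on $E$ and takes values in $E$. It therefore factors through $N\FF$ with values in $T\FF$, which is exactly the claimed model space.
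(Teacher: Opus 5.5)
Your proof is correct and follows the route the paper indicates. The paper omits the proof and only points to the isomorphism $\pi_{G_0}: G_0 \to N\FF$; your argument realizes complements of $T\FF$ as graphs of sections of $Hom(G_0,T\FF)$, transports them to $Hom(N\FF,T\FF)$ via $\pi_{G_0}^{-1}$, and gets contractibility from the straight-line homotopy. The sign hedge in your Step 3 is unnecessary: since $B=(\Pi-\Pi_0)|_{G_0}$, one has exactly $\Pi=\Pi_0+(B\circ\pi_{G_0}^{-1})\circ\pi$.
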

Here $TY/E$ may be regarded as the tangent bundle of the leaf space $Y /\sim$ of the foliation. 
In general $Y/\sim$ is not a Hausdorff topological space.

\begin{defn}[Polarized presymplectic manifolds]\label{defn:polarized} Let $(Y,\omega)$ be a presymplectic manifold and
$G \subset TY$ be a choice of complementary subbundle of $\CN \subset TY$ in \eqref{eq:splitting}.
We call a triple $(Y,\omega, G)$ a \emph{polarized presymplectic manifold}.
\end{defn}
With this definition, Gotay's symplectic neighborhood in \cite{gotay} can be stated as
\begin{thm}[Gotay] Let $(Y,\omega)$ be a presymplectic manifold and $G \subset TY$ 
be a polarization $\Gamma$ thereof. Then the following holds:
\begin{enumerate}
\item There exists a sufficiently small neighborhood $V$ of the zero section
of the foliation cotangent bundle $T^*\CF_\omega$ on which
\be\label{eq:Gotay-form}
\omega_{T^*\CF_\omega;G} := \pi^*\omega - d \theta_G
\ee
is nondegenerate where $\theta_G$ is the fiberwise Liouville one-form of $T^*\CF_\omega$ The form $\omega_{T^*\CF_\omega}$.
\item For any coisotropic submanifold $Y \subset (M,\omega)$ for symplectic manifold $(M,\omega)$ with
polarization $G \subset TY$ for $(Y,\omega|_Y)$,
there exists a neighborhood $U \supset C$ in $M$, $V \subset o_{T^*\CF_\omega}$ in $T^*\CF_\omega$ and a symplectic
diffeomorphism $\Phi:U \to V$ such that 
$$
\omega = \Phi^*\omega_{T^*\CF_\omega;G}.
$$
\end{enumerate}
\end{thm}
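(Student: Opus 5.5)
The plan is to prove Gotay's theorem in its two parts: first the nondegeneracy of $\omega_{T^*\CF_\omega;G}$ near the zero section, then the normal form by a relative Moser argument.

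\textbf{Part (1): nondegeneracy.} Since nondegeneracy is an open condition, it suffices to check it along the zero section $o_E \cong Y$, where $E = T^*\CF_\omega$. Fix $x \in Y$. The polarization $G$ and the zero section give a splitting
$$
T_{(x,0)}E = T_xY \oplus E_x = G_x \oplus T_x\CF_\omega \oplus T_x^*\CF_\omega .
$$
The Liouville form $\theta_G$ is built from the projection $TY \to T\CF_\omega$ along $G$. Along $o_E$, its differential $d\theta_G$ restricts to the canonical pairing between $T_x\CF_\omega$ and $T_x^*\CF_\omega$, vanishes on $T_xY \times T_xY$, and pairs $G_x$ trivially with the fibre.

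Meanwhile $\pi^*\omega$ equals $\omega_x$ on $T_xY$ and vanishes on the fibre directions. Hence in block form the two-form is
$$
\begin{pmatrix} \omega|_{G_x} & 0 & 0 \\ 0 & 0 & -\langle\cdot,\cdot\rangle \\ 0 & \langle\cdot,\cdot\rangle & 0 \end{pmatrix}.
$$
The block $\omega|_{G_x}$ is nondegenerate because $G_x \cong T_xY/\ker\omega_x$; this is the same observation as the interior-product lemma of Section 2. The lower $2\times 2$ block is the standard duality pairing. So the form is nondegenerate on $o_E$, hence on a neighborhood $V$ of $o_E$. Closedness is automatic, since the form is the sum of a pullback of a closed form and an exact form.

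\textbf{Part (2), step 1: first-order identification.} Let $Y \subset (M,\omega)$ be coisotropic, so that $\ker(\omega|_Y) = TY^\omega$. The normal bundle is
$$
NY = TM|_Y/TY \cong (TY^\omega)^* = T^*\CF_\omega
$$
via $v \mapsto (v \intprod \omega)|_{TY^\omega}$. This map is well defined because $TY \intprod \omega$ vanishes on $TY^\omega$, and it is an isomorphism by a dimension count. Choose a complement to $TY$ inside $TM|_Y$ and exponentiate it with a metric to obtain a tubular diffeomorphism $\phi: V' \to U'$ with $\phi|_{o_E} = \id$.

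I would then correct $d\phi$ along $Y$ so that the pullback $\omega_1 := \phi^*\omega$ agrees with $\omega_0 := \omega_{T^*\CF_\omega;G}$ at every point of $o_E$. Both forms restrict to $\omega|_Y$ on $TY$ and both induce the same pairing between $E_x$ and $T_x\CF_\omega$. The only remaining discrepancy is a term pairing $G_x$ with $E_x$, and the term pairing $E_x$ with itself. These can be absorbed by modifying the complement through a fibrewise linear map $E \to G \oplus E$, which is the standard linear-algebra step in Weinstein's neighborhood theorem.

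\textbf{Part (2), step 2: Moser's trick.} Set $\omega_t = \omega_0 + t(\omega_1-\omega_0)$. These forms agree on $o_E$, so they are nondegenerate near $o_E$ for all $t\in[0,1]$. By the relative Poincaré lemma, via the fibre homotopy $\rho_s(e) = se$, we can write
$$
\omega_1-\omega_0 = d\mu
$$
with $\mu$ vanishing to second order along $o_E$; in particular $\mu|_{o_E}=0$. Solve $X_t \intprod \omega_t = -\mu$. Then $X_t$ vanishes on $o_E$, so its flow exists up to time $1$ on a smaller neighborhood and fixes $Y$. The time-one map $\psi$ satisfies $\psi^*\omega_1 = \omega_0$. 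Then $\Phi := (\phi\circ\psi)^{-1}$ gives $\omega = \Phi^*\omega_{T^*\CF_\omega;G}$ after shrinking $U$ and $V$.

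\textbf{Main obstacle.} I expect the delicate step to be the pointwise matching in step 1. One must check carefully that the $G$-dependent cross terms of $d\theta_G$ along $o_E$ can be matched by a choice of complement. Fallback: if exact matching is awkward, it suffices that $\omega_1$ and $\omega_0$ agree on $T(T^*\CF_\omega)|_{o_E}$ up to a form that is nondegenerate along the whole linear path. One can then apply the linear Moser argument fibrewise first, and only afterwards run step 2.
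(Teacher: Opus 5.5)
The paper does not prove this statement. It quotes it from Gotay and uses it as a black box, so there is no in-paper argument to compare against. Your proof is the standard one, following Gotay's own Weinstein-style pattern: first a linear normal form along the zero section, then a relative Moser argument. It is correct apart from one slip. Part (1) is complete as written: your block decomposition of $T_{(x,0)}E = G_x\oplus T_x\CF_\omega\oplus T_x^*\CF_\omega$ is right, including the vanishing of the $G_x\times E_x$ block of $d\theta_G$. You also read the misprints in the statement correctly, as $U\supset Y$ and $V\supset o_{T^*\CF_\omega}$.

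The slip is in step 1 of part (2). The fibrewise map that corrects the complement must take values in $TY = G\oplus T\CF_\omega$, not in $G\oplus E$. A map into $G\oplus E$ moves your original complement $W$ only inside $W\oplus G$. There, vanishing of the $G\times E$ block already forces the new complement to be $(W\oplus G)\cap G^{\omega}$, and $\omega$ need not vanish on that subspace.

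The fix is to replace $w$ by $w+L_G w+L_{\CF}w$ in two stages.
\begin{enumerate}
\item Choose $L_G w\in G$ with $\omega(v,L_G w)=-\omega(v,w)$ for all $v\in G$. This is possible because $\omega|_G$ is nondegenerate, and it kills the $G\times E$ block.
\item Let $\sigma$ be the resulting skew $E\times E$ block. Choose $L_{\CF}w\in T\CF_\omega$ with $\omega(w,L_{\CF}w')=-\tfrac12\sigma(w,w')$. This is possible because $T\CF_\omega=TY^\omega$ pairs nondegenerately with the complement, and it kills $\sigma$.
\end{enumerate}
Both corrections lie in $TY$, and $T\CF_\omega=\ker(\omega|_{TY})$. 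So neither stage disturbs the $TY\times TY$ block or the $T\CF_\omega\times E$ pairing. The sign of the identification $NY\cong T^*\CF_\omega$ should be fixed to match the sign convention in $-d\theta_G$.

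With this correction, $\phi^*\omega$ and $\omega_{T^*\CF_\omega;G}$ agree exactly on $TE|_{o_E}$, so your fallback is unnecessary. Step 2 (relative Poincar\'e lemma, then Moser) goes through as you wrote it. For noncompact $Y$, nondegeneracy of $\omega_t$ and existence of the flow up to time $1$ hold on some neighborhood of $o_E$ by the usual pointwise argument, since all $\omega_t$ coincide and $X_t$ vanishes along $o_E$.
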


\begin{rem}
In the point of view of $L_\infty$ Kuranishi structures 
as indicated in the introduction of the present paper,
one may regard Gotay's symplectic neighborhood of a presymplectic manifold $(Y,\omega)$
a \emph{virtual symplectic neighborhood} of $Y$ as a virtual neighborhood in the Gromov-Witten theory.
We highlight the fact that this virtual neighborhood is \emph{intrinsic} (modulo gauge equivalence)
which depends only on the structure of null foliation of $(Y,\omega)$. We will call the corresponding intrinsic
neighborhood a \emph{primary virtual neighborhood} in the notion of $L_\infty$ Kuranishi structure.
(See the definition in Appendix B.)
\end{rem}

\section{Compatibility of system of null foliations}
\label{sec:compatible-system-null}

Assume that  $(M,\omega)$ is equipped with a smooth manifold with a \emph{nice} closed 2-form $\omega$
in the sense of Definition \ref{defn:nice-forms}. By Corollary \ref{cor:generic-closed2forms}
and Theorem \ref{thm:adaptedness}, we have a Whitney stratification $\mathcal{S}$
$$
M = \bigcup\limits_{j = N - 2[N/2]} Y_j,
$$
where $Y_j := ev^{-1}_{\omega}\big(\Lambda^2_j(M) \big)$ so that the conditions (1) and (2) hold. 
We can order the set of strata $\{Y_j\}$ by
$$
Y_j < Y_{j'} \text{ if and only if } {M}_{j} \subset \overline{M}_{j'}.
$$
One can check that this relation is transitive from the frontier axiom of stratifications.

\subsection{Mather's compatible system of neighborhoods}

Recall the definition of a compatible system of tubular neighborhoods of $M$ that Mather employed in his study of
the structure of singularities of smooth maps in \cite{mather:mappings}.

Let $\iota: S \hookrightarrow M$ be a submanifold and $\pi : F\rightarrow S$ a vector bundle over
$S$ in $M$ equipped with an inner product smoothly on each fiber. This is a geometric realization of
the normal bundle $N(S;M) \to S$ in $M$.
For a positive smooth function $\epsilon$ on $S$ which we call a \emph{scale function}, 
we denote by $D_\epsilon(F)$ the $\epsilon$-disc bundle of $F$, that is, the set of all 
$v$ satisfying $\|v\| \leq \epsilon\big(\pi(v)\big).$ Then a tubular neighborhood of $U$ of $S$ 
in $M$ is defined by the map $\phi : D_\epsilon(F) \rightarrow M$,
 a diffeomorphism of $D_\epsilon(F)$ onto an open subset of $M$ with the property 
$\iota = \phi \circ \zeta$ and 
 \be\label{eq:tubular-nhbd}
\xymatrix{ D_\epsilon \ar[d]^{\pi} \ar[r]^\phi & M \ar@{=}[d]\\
  S \ar @{^{(}->}[r]_\iota \ar@<1ex> [u]^{\zeta} & M
}
\ee
where $\zeta : S \rightarrow D_\epsilon(F) \subset F$ is the zero section embedding.
Such a map always can be constructed by the \emph{rescaled} normal exponential map by the 
scale function $\epsilon$.

We further denote $U= U_\epsilon := \phi(D_{\epsilon}(F)).$ We call the retraction map
\begin{equation}\nonumber
\pi_U := \pi \circ \phi^{-1} : U \rightarrow S, 
\end{equation}
the \textit{projection associated to} $T$ and the positive function
\begin{equation}\nonumber
\rho_U := \rho \circ \phi^{-1} : U \rightarrow \mathbb{R}_{\geq 0}
\end{equation}
the \textit{tubular function associated to} $U_\epsilon$. Here $\rho$ is the quadratic function
\be\label{eq:rho}
\rho(v) := \|v\|^2.
\ee 
for $v \in D_\epsilon(F)$
Observe that the pair of maps
\begin{equation}\nonumber
(\pi_U, \rho_U) :U  \rightarrow S \times \mathbb{R}_{\geq 0}
\end{equation}
is a submersion from the definitions.

This enables us to choose the following system of compatible tubular neighborhoods
of Mather  \cite{mather:mappings}

\begin{prop}[Mather \cite{mather:mappings}] Let $\{Y_j\}$ be the above dimension stratification of $M$.
Then there is a system of tubular neighborhood of the strata 
$$
(\pi_{U_j}, \rho_{U_j}): U_j \to Y_j \times \R_{\geq 0}
$$
satisfying the following compatibilities:
Let $\{U_j \supset Y_j\}_j$ be a family of neighborhoods of $Y_j$  induced by
a system of tubular neighborhoods
$$
\left(\pi_{U_j} : U_j  \rightarrow Y_j, \rho_{U_j} : U_j \rightarrow \mathbb{R}_{\geq 0}\right)
$$ 
the associated maps.  Then we can take the system so that it satisfies
the following commutativity conditions hold
\be\label{eq:compatibility}
\pi_j \circ \pi_{j'}  = \pi_j,\quad  \rho_j \circ \pi_{j'} = \rho_j,
\ee
for all  pair $(j,j')$ satisfying $Y_j < Y_{j'}$, whenever the maps and compositions in \eqref{eq:compatibility} 
are defined.
\end{prop}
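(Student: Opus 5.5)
The plan is to follow Mather's inductive construction, proceeding upward through the partial order $<$ on the strata $\{Y_j\}$. The key inputs are that $\{Y_j\}$ is a Whitney stratification (Theorem \ref{thm:adaptedness}) and that it is finite, since $0\le j\le N$. I will induct on the dimension of the strata, equivalently downward in nullity, starting from the deepest (closed) strata.

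\textbf{Base step.} Let $Y_j$ be a minimal stratum. It is closed because the nullity is upper semicontinuous. Choose any Riemannian metric, let $F = N(Y_j;M)$ be the orthogonal complement of $TY_j$, and form the tubular neighborhood $\phi_j: D_{\epsilon_j}(F)\to M$ from the normal exponential map rescaled by a scale function $\epsilon_j$. This produces $\pi_{U_j}$ and $\rho_{U_j}$ as in \eqref{eq:tubular-nhbd} and \eqref{eq:rho}.

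\textbf{Inductive step.} Suppose compatible tubes $(\pi_{U_i},\rho_{U_i})$ are already built for all strata $Y_i < Y_{j'}$. The essential input is Whitney's condition (b). It implies that, after shrinking the $U_i$, the pair $(\pi_{U_i},\rho_{U_i})$ restricted to $U_i\cap Y_{j'}$ is a submersion onto $Y_i\times\R_{>0}$.

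Next I construct a \emph{controlled} vector field and metric data on $Y_{j'}$ near $\bigcup_{i<j'} Y_i$. The requirement is that the tube of $Y_{j'}$ have fibers tangent to the level sets of each $(\pi_{U_i},\rho_{U_i})$. Concretely, I choose the normal bundle $F_{j'}$ of $Y_{j'}$ in $M$ so that on $U_i\cap Y_{j'}$ it lies in $\ker d\pi_{U_i}\cap\ker d\rho_{U_i}$ for all $i<j'$. Such a choice is possible pointwise because $(\pi_{U_i},\rho_{U_i})$ is a submersion on the whole of $U_i$ and on $Y_{j'}\cap U_i$, so the kernel of its differential on $TM$ surjects onto $TM/TY_{j'}$. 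Compatibility among the different $i$ comes from the inductive compatibility $\pi_i\circ\pi_{i'}=\pi_i$ and $\rho_i\circ\pi_{i'}=\rho_i$, which makes the kernels nested. I then patch these local choices with a partition of unity. This is legitimate because each condition is linear in $F_{j'}$, i.e.\ it is the condition that a complement lie in a given subbundle.

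Having $F_{j'}$, I do not take the plain exponential map. Instead I integrate a family of controlled (Thom--Mather) flows, so that $\phi_{j'}$ maps each fiber of $F_{j'}$ over $y\in U_i\cap Y_{j'}$ into the level set $\{\pi_{U_i}=\pi_{U_i}(y),\ \rho_{U_i}=\rho_{U_i}(y)\}$. This gives $\pi_i\circ\pi_{j'}=\pi_i$ and $\rho_i\circ\pi_{j'}=\rho_i$ wherever they are defined. Finally I shrink $\epsilon_{j'}$, and if necessary the earlier $\epsilon_i$ near the frontier, to keep everything embedded. Only finitely many strata are involved, so these shrinkings terminate.

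\textbf{Main obstacle.} The hard part is the inductive step: constructing $\phi_{j'}$ whose fibers lie exactly in the common level sets of all earlier $(\pi_{U_i},\rho_{U_i})$, rather than merely having tangent spaces that are compatible at the zero section. I would first try to realize $\phi_{j'}$ as the exponential map of an auxiliary metric on $U_i$ that makes the level sets of $(\pi_{U_i},\rho_{U_i})$ totally geodesic. Locally, for instance in product coordinates adapted to the submersion, this holds automatically. If that fails, I would fall back on Mather's controlled-vector-field argument from \cite{mather:mappings}, which this proposition in fact quotes.
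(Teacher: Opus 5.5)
The paper gives no argument here beyond quoting Mather, and your architecture is Mather's induction. The Whitney (b) step, the choice of $F_{j'}$ inside the kernels, and the affine patching of complements are all fine; the kernels are nested because $\pi_i\circ\pi_{i'}=\pi_i$ and $\rho_i\circ\pi_{i'}=\rho_i$. The gap is in the one step that carries the content: producing $\phi_{j'}$ whose fibers lie in the level sets of every earlier $(\pi_{U_i},\rho_{U_i})$. You leave this as `try this, else cite Mather'.

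`Controlled (Thom--Mather) flows' are the wrong tool. Controlled vector fields presuppose the control data, live along the strata, and serve the isotopy lemma, so they give no exponential-type map in the normal directions. Your first suggestion also fails: one Riemannian metric making all these level sets totally geodesic, assembled from local product metrics, does not globalize. The Levi-Civita connection depends nonlinearly on $g$, so total geodesicity is not preserved by a partition of unity. Indeed, for $g=\chi g_1+(1-\chi)g_2$ the term $(V\chi)(g_1-g_2)(X,Z)$ appears in $(\mathcal L_V g)(X,Z)$. And you need the property for all $i$ at once.

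The fix is to replace the metric by a torsion-free affine connection $\nabla$ near $Y_{j'}$.
\begin{enumerate}
\item The condition that each $\ker d(\pi_{U_i},\rho_{U_i})$ be auto-parallel (i.e.\ $\nabla_X Z$ lies in it whenever $X,Z$ do) is affine in $\nabla$, so it survives partitions of unity.
\item The condition holds locally for the flat connection of a chart straightening the whole tower. Such charts exist because $(\pi_{U_i},\rho_{U_i})=(\pi_{U_i},\rho_{U_i})|_{Y_{i'}}\circ\pi_{U_{i'}}$ with both factors submersions. Work with shrunk tubes $U_i'$, $\overline{U_i'}\subset U_i$, so that charts meeting $U_i'$ lie in $U_i$.
\item Set $\phi_{j'}:=\exp^{\nabla}$ on $D_{\epsilon_{j'}}(F_{j'})$. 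A geodesic issuing from $y$ tangent to an auto-parallel leaf stays in that leaf by uniqueness, which gives $\pi_i\circ\pi_{j'}=\pi_i$ and $\rho_i\circ\pi_{j'}=\rho_i$.
\end{enumerate}
This is the nested version of the fiberwise exponential map used for a single submersion.

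Two smaller points. First, the nesting and the straightening need the tubes through a point of $Y_{j'}$ to form a chain. So you must first make tubes of \emph{incomparable} strata disjoint, e.g.\ by taking the radius of $U_i$ at $x$ below $\frac13 d(x,Y_{i''})$, which is positive by the frontier condition. This is more than Condition~\ref{cond:triple-intersection}, which only treats equal dimensions. Second, $(\pi_{U_i},\rho_{U_i})$ is a submersion only off $Y_i$, since $d\rho_{U_i}=0$ along $Y_i$; this is harmless here because $Y_{j'}\cap Y_i=\emptyset$.
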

For the gluing purpose of the system of $L_\infty[1]$ structures on the system of neighborhoods $\{U_j\}$,
we add the following intersection property.

\begin{cond}\label{cond:triple-intersection}
We require 
$$
U_j \cap U_{j'} = \emptyset
$$
whenever $\dim Y_j = \dim Y_{j'}$. We call any such system of neighborhoods covering $M$
a \emph{good system of neighborhoods}.
\end{cond}
Such property can be ensured inductively starting from the minimal strata: This condition enables us to 
construct a compatible gluing of the $L_\infty$ structures 
inductively with respect to the partial order given by the dimension of the
stratum of the stratification.

\subsection{Dimension strata $Y_j$ and their null foliations}

We recall that each stratum $Y_j$ carries the presymplectic form which is nothing but
the restriction of the given two form $\omega$ on the total space $M$.

Consider the dimension strata $Y_j$. Then  $\omega$ has constant rank on each connected component of
$Y_j \setminus Y_{j-1}$ so that the component $Y_\alpha$ is naturally a presymplectic manifold equipped 
with the induced closed two form 
\be\label{eq:omegaj}
\omega_j = \iota_j^*\omega
\ee
restricted to the component. To simplify notations in the further discussion, 
we write the indexing set $(j,\ell)$ by the Greek letter $\alpha$ and denote the corresponding 
connected component by
$$
Y_j = Y_\alpha
$$
for $j = j(\alpha)$.
Then  we  further decompose each stratum $Y_j$ into its connected components so that
\begin{equation}
M = \bigcup\limits_{\alpha} Y_\alpha
\end{equation}
where $j = j(\alpha)$ is the dimension of $Y_\alpha$. We denote the nullity of $\omega_\alpha$ by
\be\label{eq:nullity-alpha}
m = m(\alpha).
\ee
We mention that $\nullity \omega_{\alpha} \geq \nullity \omega_{\alpha'}$
 whenever $j(\alpha) < j(\alpha')$ and $\alpha'$ is an ancestor of $\alpha$.

We now take a system of virtual symplectic 
neighborhoods $V_\alpha$ of $Y_\alpha$ which are the Gotay's normal forms $T^*\CF_\alpha$ 
\cite{gotay} over the presymplectic manifold  $Y_\alpha$.

\begin{defn}\label{defn:order}
Consider the indexing set of the connected components of dimension strata
$$
\mathfrak{P} := \{\alpha \mid 0 \leq  j \leq N - 2[N/2], \ Y_\alpha, \, j(\alpha) = j
\text{ is a connected component of } \, Y_j\}.
$$
We put a partial order on $\mathfrak{P}$ by $\alpha < \alpha'$  if and only if one of the following 
holds:
\begin{enumerate}
\item  $ j(\alpha) < j' (\alpha')$ or,
 \item $ j(\alpha) = j' (\alpha')$ and $m(\alpha) \geq m(\alpha')$.
\end{enumerate}
\end{defn}

We consider the system of tubular neighborhood $\{U_\alpha \supset Y_\alpha\}$ re-indexed by the connected components. Notice that by construction, $\ker \omega|_{Y_\alpha}$ defines a null foliation denoted by $\CF_\alpha$ on the stratum $Y_\alpha$. Then it follows that $(U_\alpha, \pi^*_{j(\alpha)} \omega|_{Y_\alpha})$ is a presymplectic manifold. We write 
$$
\widetilde \omega_\alpha := \pi^*_{j(\alpha)} \omega|_{Y_\alpha}
$$ 
and  denote by $\widetilde \CF_\alpha$ the associated null-foliation, i.e.,
$T\widetilde {\CF}_\alpha = \ker \widetilde {\omega}_\alpha$.

\begin{lem}\label{lem:dpijalpha-ker} We have 
$$
d\pi_{j(\alpha)} (\ker \widetilde \omega_{\alpha'}) \subset \ker \omega_\alpha.
$$
\end{lem}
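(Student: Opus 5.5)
The plan is to work on the overlap $U_\alpha \cap U_{\alpha'}$ of the Mather neighborhoods, with $\alpha < \alpha'$ and $Y_\alpha \subset \overline{Y}_{\alpha'}$, where both pulled-back forms are defined. The idea is to unwind the definitions of the pulled-back forms and use the compatibility relation \eqref{eq:compatibility}. I read the statement as: for $v \in \ker \widetilde\omega_{\alpha'}$ at a point $y \in U_\alpha \cap U_{\alpha'}$, the vector $d\pi_{j(\alpha)}(v)$ lies in $\ker \omega_\alpha$ at $\pi_\alpha(y) \in Y_\alpha$.

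First, I identify $\ker \widetilde\omega_{\alpha'}$. Since $\widetilde\omega_{\alpha'} = \pi_{\alpha'}^*\omega_{\alpha'}$ and $\pi_{\alpha'}$ is a submersion onto $Y_{\alpha'}$, the kernel at $y$ is
$$
\ker \widetilde\omega_{\alpha'}|_y = (d\pi_{\alpha'}|_y)^{-1}\big(\ker \omega_{\alpha'}|_{\pi_{\alpha'}(y)}\big).
$$
So it contains the vertical fibers of $\pi_{\alpha'}$ together with lifts of the null directions of $\omega_{\alpha'}$.

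Second, I use the compatibility $\pi_\alpha \circ \pi_{\alpha'} = \pi_\alpha$ to write $d\pi_\alpha|_y = d\pi_\alpha|_{\pi_{\alpha'}(y)} \circ d\pi_{\alpha'}|_y$. This reduces the claim to a statement on the stratum $Y_{\alpha'}$ near $Y_\alpha$: for $z \in Y_{\alpha'} \cap U_\alpha$ and $w \in \ker \omega_{\alpha'}|_z$, we need $d\pi_\alpha(w) \in \ker \omega_\alpha|_{\pi_\alpha(z)}$. Vertical vectors of $\pi_{\alpha'}$ cause no trouble: they map to $0$ under $d\pi_{\alpha'}$, hence to $0$ under $d\pi_\alpha$.

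Third, and this is the main step, I have to compare the null distribution of $\omega$ on $Y_{\alpha'}$ with that on the deeper stratum $Y_\alpha$, through the retraction $\pi_\alpha$. Here I would use the ordering $m(\alpha) \ge m(\alpha')$ and the frontier/Whitney condition (b) of Theorem \ref{thm:adaptedness}. Take a sequence $z_k \to x \in Y_\alpha$ in $Y_{\alpha'}$. Along it, the kernels $\ker\omega_{z_k}$ converge, after passing to a subsequence in the Grassmannian, to a subspace contained in $\ker \omega_x$, by continuity of $\omega$. Restricted to $TY_\alpha$ this limit lies in $\ker\omega_\alpha|_x$. To pass from the limit to actual points, I would use the product structure on the tubular neighborhood. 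The approach I would try first is to shrink the scale function $\epsilon$ of $U_\alpha$ and choose $\pi_\alpha$ so that it is adapted to the null foliation, for instance by constructing the tubular map from a normal exponential map for a metric in which $\ker \omega_\alpha$ and its complement $G_\alpha$ are orthogonal. Then $d\pi_\alpha$ maps $\ker\omega|_{Y_{\alpha'}}$ into $\ker \omega_\alpha$ up to the vertical part, which is killed. If this exact inclusion fails for a general tubular structure, the fallback is to impose it as part of the choice of the compatible system $\{U_\alpha\}$, which Condition \ref{cond:triple-intersection} and the inductive construction from the minimal strata allow. I expect this step to be the real obstacle, since upper semicontinuity alone only controls limits, not nearby points.
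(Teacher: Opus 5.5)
Your first two steps are correct and match how the paper begins. By \eqref{eq:compatibility} you have $d\pi_\alpha=d\pi_\alpha\circ d\pi_{\alpha'}$, so vertical vectors of $\pi_{\alpha'}$ are harmless, and the claim reduces to $d\pi_\alpha(\ker\omega|_{Y_{\alpha'}})\subset\ker\omega_\alpha$ on $Y_{\alpha'}\cap U_\alpha$.

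The gap is your third step, and it cannot be closed by a better argument: this inclusion does not follow from Mather's compatibility conditions. Your limit argument only shows that limits of $\ker\omega_{z_k}$ lie in $\ker\omega_x\subset T_xM$. What lies in $\ker\omega_\alpha|_x$ is $\ker\omega_x\cap T_xY_\alpha$, not the projection of $\ker\omega_x$ along the fibre of $\pi_\alpha$.

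Here is a local counterexample. On $\mathbb{R}^5$ with coordinates $(a,b,u,v,w)$ take
\[
\omega=(da+du)\wedge db+u\,dv\wedge dw+v\,dw\wedge du-2w\,du\wedge dv .
\]
\begin{itemize}
\item It is closed: the last three terms are $\iota_B(du\wedge dv\wedge dw)$ with $B=u\partial_u+v\partial_v-2w\partial_w$, which is divergence free.
\item In the coordinates $(a+u,b,u,v,w)$ it is a product. Hence its nullity is $3$ exactly on the plane $Y_\alpha=\{u=v=w=0\}$ and $1$ on the complement $Y_{\alpha'}$.
\item The map $x\mapsto\omega_x$ is transverse to $\Lambda^2_3$, since the normal components $(u,v,-2w)$ form a submersion.
\item $\omega_\alpha=da\wedge db$ is symplectic, so $\ker\omega_\alpha=0$. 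On $Y_{\alpha'}$ the kernel is spanned by $-u\,\partial_a+u\,\partial_u+v\,\partial_v-2w\,\partial_w$.
\end{itemize}
The Euclidean tube $\pi_\alpha(a,b,u,v,w)=(a,b)$, $\rho_\alpha=u^2+v^2+w^2$ satisfies all compatibility requirements, because $Y_{\alpha'}$ is open. Yet $d\pi_\alpha$ sends the kernel vector to $-u\,\partial_a\neq0$. Even the limiting kernel line along the $u$-axis, $\operatorname{span}\{\partial_u-\partial_a\}\subset\ker\omega_x$, projects onto $\operatorname{span}\{\partial_a\}$.

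So your suspicion about this step is justified, and the paper's proof does not supply the missing ingredient either. Its chain of equalities uses
\[
(\pi_\alpha^{\alpha'})^*\omega(w,w')=\omega_{\alpha'}(w,w'),
\]
which identifies $\omega|_{Y_{\alpha'}}$ with the pullback of $\omega_\alpha$. That cannot hold in general, because the pullback has the smaller rank of $\omega_\alpha$ (rank $2$ versus $4$ in the example). Concretely, at $(a,b,u,0,0)$ with $w$ the kernel vector and $w'=\partial_b$, the left side is $-u$ and the right side is $0$.

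The statement holds only for an adapted tube. For instance, when $\omega_\alpha$ is symplectic it holds iff the fibres of $\pi_\alpha$ are saturated by the null leaves of $\omega|_{Y_{\alpha'}}$; in the example, $(a,b,u,v,w)\mapsto(a+u,b)$ works. Your fallback of building this into the choice of the system is therefore the honest route. But then the lemma becomes a hypothesis, and you must prove it can be realized compatibly with \eqref{eq:compatibility}; neither you nor the paper does this.

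Your concrete suggestions do not achieve it. A metric making $\ker\omega_\alpha\perp G_\alpha$ imposes nothing here: in the example $\ker\omega_\alpha=0$, and the Euclidean metric gives exactly the bad projection. Condition \ref{cond:triple-intersection} concerns disjointness of neighbourhoods of equidimensional strata and is unrelated.
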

\begin{proof} Let $p \in U_{\alpha}$ and $w \in T_p U_\alpha \cap \ker \widetilde \omega_\alpha|_p$.
Then there exists $Y_{\alpha'}$ such that $p \in U_\alpha \cap Y_{\alpha'}$ for some $\alpha'$ with
$j(\alpha) < j(\alpha')$.
By the compatibility of the system, we have 
$$
\pi_{j(\alpha)}(p) = \pi_{\alpha}^{\alpha'}(p) = x, \quad d\pi_{j(\alpha)}(w) = d\pi_{\alpha}^{\alpha'}(w).
$$
Let $v \in T_x Y_\alpha$ be an arbitrary vector. Since $\pi_\alpha^{\alpha'}$  is a submersion, we can
express $v = d\pi_\alpha^{\alpha'}(w')$ for some $w' \in T_p Y_{\alpha'}$.

We evaluate
\beastar
\omega_\alpha(d\pi_{j(\alpha)}w, v) & = & \pi^*_{j(\alpha)}\omega_\alpha(w, w') 
=  \pi^*_{j(\alpha)}\iota_{j(\alpha)}^* \omega(w, w')  \\
& = & ( \iota_{j(\alpha)} \pi_{j(\alpha)})^* \omega(w,w') = 
(\pi_{j(\alpha)})^*\omega(w,w') \\
& = & \omega(d\pi_{j(\alpha)}(w), d\pi_{j(\alpha)}(w')) 
= \omega( d\pi_{\alpha}^{\alpha'}(w),  d\pi_{\alpha}^{\alpha'}(w')) \\
& = & (\pi_\alpha^{\alpha'})^* \omega(w,w')  = \omega_{\alpha'}(w, w') = 0
\eeastar
where the last vanishing follows from the hypothesis $w \in \ker \omega_{\alpha'}$.
Therefore we have proved
$d\pi_{j(\alpha)}w \in \ker \omega_\alpha$ which finishes the proof.
\end{proof}

\section{Compatible choice of  polarizations $G_\alpha$}

In this section, we will make a \emph{compatible choice} of the system of 
polarizations $\{G_\alpha\}$.
The first matter of business for the purpose
is to describe the compatibility condition whose description is in order.

We start with a minimal strata $Y_{\text{\rm min}}$ and choose any slice $G_{\text{\rm min}}$ so that 
\be\label{eq:slice-minimal}
TY_{\text{\rm min}} = T\CF_{\text{\rm min}} \oplus G_{\text{\rm min}}.
\ee
Then we consider a pair of strata $Y_\alpha$ and $Y_{\alpha'}$ that is consecutive with
$\alpha< \alpha'$ with $\alpha = (j,m)$, $\alpha' = (j',m')$.
 By the semi-continuity of the nullity and the equality
$N = m (\alpha) + 2\ell(\alpha)$ for all $\alpha$, we have
$$
j \leq j'  \quad \text{\rm and} \quad  \ell \leq \ell'.
$$
By dimension counting, we have
$$
\dim G_\alpha = N - m(\alpha) = 2 \ell(\alpha).
$$
Denote by $V_{\alpha\alpha'}$ the intersection 
$$
U_\alpha \cap Y_{\alpha'} =: V_{\alpha\alpha'} \subset Y_{\alpha'}
$$
which is a tubular neighborhood of $Y_\alpha$ in $Y_{\alpha'}$. We denote by
$\pi_\alpha^{\alpha'}: Y_{\alpha'} \to Y_\alpha$ the tubular projection of 
$Y_{\alpha'}$ to $Y_\alpha$. Since 
$$
\omega_\alpha =\iota_\alpha^* \omega, \quad \omega_{\alpha'} = \iota_{\alpha'}^*\omega,
$$
Lemma \ref{lem:dpijalpha-ker} implies
$$
\ker  (\pi_{\alpha}^{\alpha'})^*\omega_\alpha \supset \ker \omega|_{Y_{\alpha'} }
$$
on $V_\alpha$. 

We will choose the collection of polarizations $\{G_{\alpha}\}_{\alpha \in \mathfrak{P}}$ so that the following holds.  
\begin{prop}\label{prop:Galpha'-Galpha} Let $\alpha < \alpha'$ be as above and assume $G_{\alpha}$
be given. Then we can choose a polarization $G_{\alpha'}$ of $Y_\alpha$ and a tubular neighborhood $V_\alpha \supset Y_\alpha$
so that
\be\label{eq:Galpha'-Galpha}
(\pi_{\alpha}^{\alpha'})^* G_{\alpha} \supset G_{\alpha'} \cap (\pi_\alpha^{\alpha'})^*TY_{\alpha}
\ee
holds on $V_\alpha$.
\end{prop}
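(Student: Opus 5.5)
We will build $G_{\alpha'}$ on $V_\alpha = U_\alpha\cap Y_{\alpha'}$ by splitting $TY_{\alpha'}$ into vertical and horizontal parts for the tubular projection $\pi_\alpha^{\alpha'}$. The compatibility \eqref{eq:compatibility} makes $\pi_\alpha^{\alpha'}: V_\alpha\to Y_\alpha$ a submersion, so we have the vertical subbundle $\CV := \ker d\pi_\alpha^{\alpha'}\subset TY_{\alpha'}$. We then fix an Ehresmann connection $\CH$ for this submersion, giving $TY_{\alpha'}|_{V_\alpha} = \CH\oplus \CV$ with $d\pi_\alpha^{\alpha'}|_{\CH}: \CH \to (\pi_\alpha^{\alpha'})^*TY_\alpha$ an isomorphism. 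Through this isomorphism we identify $(\pi_\alpha^{\alpha'})^*TY_\alpha$ with $\CH$, and $(\pi_\alpha^{\alpha'})^*G_\alpha$ with a subbundle $\widehat G_\alpha\subset \CH$. This is how we interpret \eqref{eq:Galpha'-Galpha}: the condition reads $G_{\alpha'}\cap \CH\subset \widehat G_\alpha$.

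Next we bring in Lemma \ref{lem:dpijalpha-ker}, which gives $d\pi_\alpha^{\alpha'}(\ker\omega_{\alpha'})\subset \ker \omega_\alpha$. We decompose $\CH = \widehat G_\alpha\oplus \widehat E_\alpha$, where $\widehat E_\alpha$ is the horizontal lift of $(\pi_\alpha^{\alpha'})^*T\CF_\alpha$. The lemma then says that $\ker\omega_{\alpha'}$ projects into $\widehat E_\alpha\oplus\CV$ along $\widehat G_\alpha$, i.e. $T\CF_{\alpha'}\subset \widehat E_\alpha\oplus \CV$. We define
$$
G_{\alpha'} := \widehat G_\alpha \oplus G',
$$
where $G'$ is any complement of $T\CF_{\alpha'}$ inside $\widehat E_\alpha\oplus\CV$. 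The complement can be taken orthogonal for an auxiliary metric, so it is smooth. Because $T\CF_{\alpha'}\cap \widehat G_\alpha=0$, this $G_{\alpha'}$ is indeed a complement of $T\CF_{\alpha'}$ in $TY_{\alpha'}$, hence a polarization.

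It remains to verify the inclusion $G_{\alpha'}\cap\CH\subset \widehat G_\alpha$. Take $v = g + g'\in\CH$ with $g\in\widehat G_\alpha$ and $g'\in G'$. Then $g' = v - g$ lies in $\CH$, so $g' \in G'\cap\CH\subset (\widehat E_\alpha\oplus \CV)\cap \CH = \widehat E_\alpha$. This step fails unless $G'\cap\widehat E_\alpha = 0$, so we must choose $G'$ with that property. That means we need $T\CF_{\alpha'}\supset$ a complement of $\CV$ in $\widehat E_\alpha\oplus\CV$, i.e. $T\CF_{\alpha'}+\CV \supset \widehat E_\alpha$, equivalently $d\pi_\alpha^{\alpha'}(\ker\omega_{\alpha'}) = \ker\omega_\alpha$.

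This surjectivity is the step I expect to be the main obstacle. If it fails, $G'$ cannot avoid $\widehat E_\alpha$, and the rank of the part of $\widehat E_\alpha$ not reached by $\ker\omega_{\alpha'}$ is exactly what obstructs \eqref{eq:Galpha'-Galpha}. To establish surjectivity, I would first use the freedom in shrinking the scale function $\epsilon$ of $V_\alpha$, together with lower semicontinuity of the rank of $d\pi_\alpha^{\alpha'}|_{\ker\omega_{\alpha'}}$ near $Y_\alpha$. I would combine this with the stratawise transversality of $\ev_\omega$ from Theorem \ref{thm:adaptedness} to argue that near $Y_\alpha$ the null directions of $\omega_{\alpha'}$ degenerate onto those of $\omega_\alpha$.

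Failing that, I would fall back on the weaker reading allowed by the statement's "choose a tubular neighborhood": choose the Ehresmann connection $\CH$ adapted to $\omega_{\alpha'}$. Concretely, take $\CH\supset$ a lift of $\ker\omega_\alpha$ contained in $T\CF_{\alpha'}+\CV$ wherever possible, which again reduces to the same surjectivity on a dense open set, and then conclude by continuity.
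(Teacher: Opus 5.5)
You correctly reduce \eqref{eq:Galpha'-Galpha} to finding a complement $G'$ of $T\CF_{\alpha'}$ in $\widehat E_\alpha\oplus\CV$ with $G'\cap\widehat E_\alpha=0$. The step you flag, however, is not a technicality, and it cannot be closed. Write $n_\alpha=\operatorname{rank}\ker\omega_\alpha$ and $n_{\alpha'}=\operatorname{rank}\ker\omega_{\alpha'}$.

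First, your translation of the requirement is too strong. Since $\operatorname{rank}G'=\operatorname{rank}(\widehat E_\alpha\oplus\CV)-n_{\alpha'}$, such a $G'$ exists pointwise exactly when $n_\alpha\le n_{\alpha'}$, and a generic complement then works. The surjectivity $d\pi_\alpha^{\alpha'}(\ker\omega_{\alpha'})=\ker\omega_\alpha$ is sufficient (it lets you take $G'\subset\CV$) but not necessary.

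Second, and decisively, \eqref{eq:Galpha'-Galpha} forces $n_\alpha\le n_{\alpha'}$ regardless of any choice. Take any polarization $G_{\alpha'}$ and any rank-$\dim Y_\alpha$ subbundle $\CH\subset TY_{\alpha'}$ standing for $(\pi_\alpha^{\alpha'})^*TY_\alpha$. Then
\[
\operatorname{rank}(G_{\alpha'}\cap\CH)\ \ge\ (\dim Y_{\alpha'}-n_{\alpha'})+\dim Y_\alpha-\dim Y_{\alpha'}=\dim Y_\alpha-n_{\alpha'},
\qquad
\operatorname{rank}\widehat G_\alpha=\dim Y_\alpha-n_\alpha .
\]
Shrinking $V_\alpha$, changing the Ehresmann connection, or invoking stratawise transversality changes none of these ranks. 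Semicontinuity works against you, since the nullity of $\omega_x$ can only jump up on the smaller stratum.

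The case $n_\alpha>n_{\alpha'}$ does occur for nice forms. Take the fold model $\omega=dx_1\wedge dx_2+x_3\,dx_3\wedge dx_4$ on $\R^4$. It satisfies the transversality condition, since $\omega_x\neq0$ and $\operatorname{Pf}(\omega_x)=x_3$ vanishes transversally. Here $Y_\alpha=\{x_3=0\}$ carries $\omega_\alpha=dx_1\wedge dx_2$ with $\ker\omega_\alpha=\R\,\partial_{x_4}$; indeed any odd-dimensional stratum has $n_\alpha\ge1$. Meanwhile $Y_{\alpha'}=\{x_3>0\}$ is symplectic, so $G_{\alpha'}=TY_{\alpha'}$ is forced. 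Then $G_{\alpha'}\cap\CH=\CH$ has rank $3$, against $\operatorname{rank}\widehat G_\alpha=2$. Reading \eqref{eq:Galpha'-Galpha} as $d\pi_\alpha^{\alpha'}(G_{\alpha'})\subset G_\alpha$ fails equally. So the proposition as stated fails, and your last two paragraphs cannot be completed.

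The paper's proof does not avoid this. It gets the intersection property only by giving up the polarization property, taking $N(T\CF_{\alpha'};(\pi_\alpha^{\alpha'})^*T\CF_\alpha):=\ker\omega_{\alpha'}+\ker\pi_\alpha^{\alpha'}$. Its $G_{\alpha'}$ then contains $\ker\omega_{\alpha'}$ whenever that is nonzero. In the model above it equals $\widehat G_\alpha\oplus\ker d\pi_\alpha^{\alpha'}$, of rank $3$ in a rank-$4$ bundle. Its final step $(A+B)\cap C\subset A\cap C$ is also not valid in general.

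What your construction does give is twofold. It gives the reverse inclusion $\widehat G_\alpha\subset G_{\alpha'}$, which is the one the paper actually invokes when building $\mathfrak f^{\alpha\alpha'}$. It also gives \eqref{eq:Galpha'-Galpha} itself under the extra hypothesis $d\pi_\alpha^{\alpha'}(\ker\omega_{\alpha'})=\ker\omega_\alpha$, by taking $G'\subset\CV$ complementary to $T\CF_{\alpha'}\cap\CV$. Both conclusions grant the inclusion $d\pi_\alpha^{\alpha'}(\ker\omega_{\alpha'})\subset\ker\omega_\alpha$ from Lemma \ref{lem:dpijalpha-ker}. Note that the paper justifies that lemma by identifying $(\pi_\alpha^{\alpha'})^*\omega_\alpha$ with $\omega_{\alpha'}$, and the inclusion can fail for a tubular projection not adapted to $\omega$.
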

\begin{proof} On $Y_{\alpha}$, we have
$$
T\CF_{\alpha} \subset TY_{\alpha} \subset (\iota_{\alpha})^*TM = TM|_{Y_{\alpha}}.
$$
Let $G_{\alpha}$ be the given splitting 
\be\label{eq:TMalpha}
TY_{\alpha} = G_{\alpha} \oplus T\CF_{\alpha}.
\ee
and consider the stratum $Y_{\alpha'}$. We will choose $G_{\alpha'}$ so that
\be\label{eq:splitting-alpha'}
TY_{\alpha'} = G_{\alpha'} \oplus T\CF_{\alpha'}
\ee
and that \eqref{eq:Galpha'-Galpha} holds. We recall
\be\label{eq:keralpha<keralpha'}
\ker \omega_\alpha = \ker (\iota_\alpha^{\alpha'})^*\omega_{\alpha'} \quad \text{\rm on } \, Y_\alpha
\ee
by the definition of $\omega_\alpha = \iota_\alpha^*\omega$ and 
$\iota_\alpha =\iota_{\alpha'}\circ  \iota_{\alpha}^{\alpha'}$.

\begin{sublem}\label{sublem:include} For any $\alpha < \alpha'$, the presymplectic form
$$
(\pi_\alpha^{\alpha'})^*\omega_\alpha 
$$
 satisfies
\be\label{eq:keralpha<keralpha'-pi}
\ker (\pi_\alpha^{\alpha'})^*\omega_\alpha \supset \ker \omega_{\alpha'} \quad \text{\rm on } \, Y_{\alpha'}.
\ee
\end{sublem}
\begin{proof} Let $v' \in TY_{\alpha'}$ satisfy $v' \intprod  \omega_{\alpha'} = 0$.
Recall
$$
TY_{\alpha'} = G_{\alpha'} \oplus T\CF_{\alpha'}
$$
and $G_{\alpha'}|_{Y_\alpha} \subset G_\alpha$ and that
$$
d\pi_\alpha^{\alpha'}(G_{\alpha'}) \subset G_\alpha.
$$
We evaluate
$$
(\pi_\alpha^{\alpha'})^*\omega_\alpha(v',w') 
= \omega_\alpha\left(d\pi_\alpha^{\alpha'}(v'), d\pi_\alpha^{\alpha'}(w')\right).
$$
By the compatibility $(\pi_\alpha, \rho_\alpha)$, we have
$$
d\pi_\alpha^{\alpha'}( T\CF_{\alpha'}) \subset  T\CF_{\alpha}.
$$
This implies $\omega_\alpha(d\pi_\alpha^{\alpha'}(v'), d\pi_\alpha^{\alpha'}(w')) = 0$
which in turn implies
$$
(\pi_\alpha^{\alpha'})^*\omega_\alpha(v',w') = 0
$$
for all $w' \in TY_{\alpha'}$. This proves $v' \in \ker (\pi_\alpha^{\alpha'})^*\omega_\alpha$,
which finishes the proof.
\end{proof}


We have the natural exact sequence
$$
0 \longrightarrow  (\pi_\alpha^{\alpha'})^* TY_\alpha  \longrightarrow 
TV_\alpha  \longrightarrow T_{Y_\alpha}V_\alpha \longrightarrow 0
$$
where $T_{Y_\alpha}V_\alpha$ is the normal bundle of $Y_\alpha$ in $V_\alpha$.
Pulling back \eqref{eq:TMalpha} by $\pi_\alpha^{\alpha'}$, we obtain
\be\label{pi*TMalpha}
 (\pi_\alpha^{\alpha'})^*TY_\alpha =  (\pi_\alpha^{\alpha'})^*(G_\alpha) \oplus  (\pi_\alpha^{\alpha'})^*(T\CF_\alpha)  .
\ee
Because of \eqref{eq:keralpha<keralpha'-pi},  $\omega_{\alpha'}$ is
nondegenerate on  $(\pi_\alpha^{\alpha'})^*G_{\alpha}$.

We take $G_{\alpha'}$ on $Y_{\alpha'}$  so that
\be\label{eq:Galpha'toGalpha}
G_\alpha' = (\pi_\alpha^{\alpha'})^*G_\alpha  \oplus N(T\CF_{\alpha'}; (\pi_\alpha^{\alpha'})^*T\CF_{\alpha})
 \ee
where $N(T\CF_{\alpha'}; (\pi_\alpha^{\alpha'})^*T\CF_{\alpha})$ is any geometric representative of the 
quotient bundle 
$$
(\pi_{\alpha}^{\alpha'})^*T\CF_{\alpha} /T\CF_{\alpha'}
$$
on $V_\alpha \cap Y_{\alpha'}$. 

\begin{lem} We have
$$
\ker \omega_{\alpha'} \cap (\pi_\alpha^{\alpha'})^*G_\alpha = \{0\}. 
$$
\end{lem}
\begin{proof}
Let $w \in (\pi_\alpha^{\alpha'})^*G_\alpha$ be a nonzero vector.
Then we have
$$
d\pi_\alpha^{\alpha'}(w) \in G_\alpha
$$
and hence there is a vector $v \in G_\alpha$ such that $\omega_\alpha(d\pi_\alpha^{\alpha'}(w), v) \neq 0$
by nondegeneracy of $\omega_\alpha$ on $G_\alpha$.
Since $d\pi_\alpha^{\alpha'}|_{G_\alpha'}$ is surjective onto $G_\alpha$, we can write
$$
v = d\pi_\alpha^{\alpha'}(w') 
$$
for some $w' \in G_\alpha'$, and hence
$$
0 \neq \omega_\alpha\left(d\pi_\alpha^{\alpha'}(w), v\right)
= \omega_\alpha\left(d \pi_\alpha^{\alpha'}(w), d \pi_\alpha^{\alpha'}(w') \right)
= (\pi_\alpha^{\alpha'})^*\omega_\alpha(w,w')
$$
This proves that $w \not \in \ker (\pi_\alpha^{\alpha'})^*\omega_\alpha$ which in turn
implies $w \not \in \ker \omega_{\alpha'}$ by Sublemma \ref{sublem:include}.
This finishes the proof.
\end{proof}

By definition of $\ker \omega_{\alpha'}$,  we also have
$$
\ker \omega_{\alpha'} \cap N \left(T\CF_{\alpha'}; (\pi_\alpha^{\alpha'})^*T\CF_{\alpha}\right)  = \{0\}.
$$
Combining the two,  we have shown that  $\ker \omega_{\alpha'} \cap G_{\alpha'} = 0$.
This proves that $G_{\alpha'}$ is a polarization of $\omega_{\alpha'}$ on $V_\alpha$.

Finally, to make the inclusion \eqref{eq:Galpha'-Galpha} hold, we
need to take  the geometric representative $N(T\CF_{\alpha'}; T\CF_{\alpha})$ suitably.
For this, we  just take
$$
N\left(T\CF_{\alpha'}; (\pi_\alpha^{\alpha'})^*T\CF_{\alpha}\right) := \ker \omega_{\alpha'} + \ker \pi_{\alpha}^{\alpha'}.
$$
Then
\beastar
G_{\alpha'} \cap (\pi_\alpha^{\alpha'})^*TY_{\alpha} & = &
\left((\pi_\alpha^{\alpha'})^*G_\alpha + \ker \omega_{\alpha'} + \ker \pi_{\alpha}^{\alpha'}\right)
\cap (\pi_\alpha^{\alpha'})^*TY_{\alpha} \\
&\subset& (\pi_\alpha^{\alpha'})^*G_\alpha \cap (\pi_\alpha^{\alpha'})^*TY_{\alpha}
\subset (\pi_\alpha^{\alpha'})^*G_\alpha.
\eeastar  
This finishes the proof of the proposition.
\end{proof}
The equation \eqref{eq:Galpha'toGalpha} also shows the way how we extend 
the given $G_{\alpha}$ to the slice on $Y_{\alpha'}$ and $V_{\alpha'}$: We have only to 
choose the normal bundle \emph{inside the stratum $Y_{\alpha'}$ without affecting
the given polarization $G_{\alpha}$ on $Y_{\alpha}$ but just restricting it to $Y_{\alpha'}$.}
This enables us to perform the inductive construction of $G_\alpha$ till we reach 
the maximal strata by choosing the system of tubular neighborhoods $\{(U_\alpha, \pi_\alpha,\rho_\alpha)\}$ 
small enough.

We write
$$
 \left(V_\alpha, \widetilde \omega_\alpha,\widetilde G_\alpha\right)
 : =\left (V_\alpha, (\pi_\alpha^{\alpha'})^*\omega_\alpha,  (\pi_\alpha^{\alpha'})^*G_\alpha\right)
$$
and call it a \emph{pull-back presymplectic tubular neighborhood} of the polarized symplectic stratum
$(Y_\alpha,\omega_\alpha,G_\alpha)$. We also  simply denote it by
\be\label{eq:T-alpha}\
\CV_\alpha^\omega
 : = \left(V_\alpha, \widetilde \omega_\alpha,\widetilde G_\alpha\right) \supset (Y_\alpha,\omega_\alpha,G_\alpha)
 \ee

\begin{defn} We call $Y_\alpha$ a presymplectic stratum of $\omega$ and the system of  triples
$$
\CP\CS\omega: = \{(Y_\alpha,\omega_\alpha, G_\alpha)\}_{\alpha \in \mathfrak P}
$$
a \emph{polarized presymplectic stratification}, and call the system
$
\{ (V_\alpha, \widetilde \omega_\alpha,\widetilde G_\alpha)\}_{\alpha \in \mathfrak P}
$
the  \emph{pull-back stratified presymplectic atlas} of $(M,\omega)$
\end{defn}

\section{Compatible system of $L_\infty$ spaces over the presymplectic strata}
\label{sec:compatible-system-linftyspaces}
Let 
$$
\{\CV_\alpha^\omega\} = \left\{(V_\alpha, \widetilde \omega_\alpha, \widetilde G_\alpha)\right\}_{\alpha \in \mathfrak P}
$$
 be the pull-back presymplectic atlas of the polarized presymplectic stratification
$\CP\CS{(M,\omega)}=\{(Y_\alpha, \omega_\alpha, G_\alpha)\}_{\alpha \in \mathfrak P}$
of $(M,\omega)$. We denote by
$$
\mathfrak l^{\CV^\omega}_\alpha = \left\{\mathfrak l^{\CV^\omega_\alpha}_k \right\}_{k \geq 0}
$$
the $L_\infty[1]$ algebra  associated to 
$\CV_\alpha^\omega = (V_\alpha, \widetilde \omega_\alpha, \widetilde G_\alpha)$.

\subsection{Statement of compatibility}
\label{subsec:ellinfty-compatibility}

We start with applying the main theorem of \cite{oh-park:coisotropic}, Theorem \ref{thm:oh-park-intro}, to
each $\CV_\alpha^\omega$:
 For each $\CV_\alpha^\omega$, we can 
canonically equip it with an $L_\infty[1]$-structure on the graded complex
$$
\left( \bigoplus_{*} \Omega^*(\CF_\alpha)[1], \{\mathfrak l_k^\alpha\}_{\ell \geq 1}\right).
$$
We denote by $\mathfrak l^{\CV_\alpha^\omega}[1]$ the corresponding  $L_\infty[1]$ algebra.

The following theorem is also proved (\emph{in the formal level}) in \cite[Theorem 10.1]{oh-park:coisotropic}
using the fact that \emph{the choice of splittings is contractible.}
(See \cite[Definition 8.3]{fukaya:cyclic}, \cite[Definition 3.35]{fukaya:immersed} for the definition of 
pseud-isootopy in the context of $A_\infty$ algebra which can be easily adapted to the case of $L_\infty$ algebras.)

\begin{thm}[Theorem 10.1 \cite{oh-park:coisotropic}] Let $(M,\omega)$ be a presymplectic manifold and
consider two splittings 
$$
TM = G \oplus T\CF_\omega = G' \oplus T\CF_\omega.
$$
Denote by $\mathfrak l^G$ and $\mathfrak l^{G'}$ be the
associated $L_\infty[1]$ algebras. Then the two are canonically $L_\infty$ pseudo-isotopic.
\end{thm}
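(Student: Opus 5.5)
Since any two splittings $G$, $G'$ lie in the affine space of Lemma \ref{lem:splittings}, we join them by the straight-line path
$$
G_t = \{ y + t\, B(y) \mid y \in G\}, \qquad B := B_{\Pi_G\Pi_{G'}} \in \Gamma(\operatorname{Hom}(G,T\CF_\omega)), \quad t\in[0,1],
$$
and build a pseudo-isotopy out of the family of $L_\infty[1]$ algebras $\mathfrak l^{G_t}$. The foliation de Rham complex $\Omega^*(\CF_\omega)$ and its differential $d_{\CF_\omega}=\mathfrak l_1$ (Lemma \ref{lem:strlinf}) do not depend on the splitting. Only the higher brackets $\mathfrak l_k^{G_t}$, $k\ge 2$, vary. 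They are obtained from the Poisson bivector $P_t$ of the Gotay form $\omega_t:=\pi^*\omega - d\theta_{G_t}$ on a neighborhood $V$ of the zero section of $T^*\CF_\omega$, via the Voronov/Cattaneo--Sch\"atz derived-bracket construction.

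The first step is a uniform Gotay neighborhood. By compactness of $[0,1]$ (working locally on $M$ if $M$ is noncompact), there is a neighborhood $V$ of the zero section on which every $\omega_t$ is nondegenerate. On $V$ we then have a smooth family of Poisson bivectors $P_t$, and hence brackets $\mathfrak l_k^{G_t}$ that depend smoothly on $t$. Their structure constants are fiberwise Taylor coefficients of $P_t$ along the zero section, so this smoothness is immediate. Assembling them gives the candidate pseudo-isotopy, in the sense of \cite[Definition 8.3]{fukaya:cyclic} adapted to $L_\infty$:
$$
\mathfrak l_k^{t} + dt\,\mathfrak c_k^{t} \quad \text{on } \Omega^*(\CF_\omega)\otimes\Omega^*([0,1]).
$$

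The second step, and the heart of the proof, is to construct the correction terms $\mathfrak c^t$ and verify the $L_\infty$ relations on $\Omega^*(\CF_\omega)\otimes \Omega^*([0,1])$. Those relations amount to the requirement that $\tfrac{d}{dt}\mathfrak l^t$ be a $\mathfrak l^t$-coboundary generated by $\mathfrak c^t$. The plan is to apply Moser's trick: the forms $\omega_t$ are cohomologous, since
$$
\tfrac{d}{dt}\omega_t = -d\dot\theta_t, \qquad \dot\theta_t := \tfrac{d}{dt}\theta_{G_t},
$$
and $\dot\theta_t$ is linear in the fiber and vanishes on the zero section. Let $X_t$ be the vector field defined by $X_t\intprod\omega_t = \dot\theta_t$. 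Its flow $\phi_t$ fixes the zero section and satisfies $\phi_t^*\omega_t=\omega_0$. Equivalently, $\tfrac{d}{dt}P_t = [X_t,P_t]$ in the Schouten bracket. We then take $\mathfrak c^t$ to be the derived-bracket components of $X_t$, namely the multilinear maps obtained by Taylor expanding $X_t$ along the fiber, projected as in the Cattaneo--Sch\"atz construction. The derivation property of the Schouten bracket should then yield the relation $\tfrac{d}{dt}\mathfrak l^t=[\mathfrak l^t,\mathfrak c^t]$ componentwise.

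The main obstacle is this compatibility of $X_t$ with the projection defining the derived brackets. Concretely, $X_t$ must be tangent to the zero section, and its vertical part must be fiberwise polynomial to each order, so that $\mathfrak c^t$ descends to $\Omega^*(\CF_\omega)$. I would check this directly from the facts that $\dot\theta_t$ vanishes along $o_{T^*\CF_\omega}$ and that $\omega_t$ restricted to the zero section equals $\omega$. The statement is only asserted in the formal level, so order-by-order Taylor expansion in the fiber variables suffices and convergence can be ignored.

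Canonicity also needs to be addressed. The space of splittings is contractible, so any two paths between $G$ and $G'$ are homotopic. Running the same construction over a two-parameter family should give a pseudo-isotopy of pseudo-isotopies, which shows that the result is independent of the path up to the natural equivalence.
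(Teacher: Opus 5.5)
Your proposal is correct and follows essentially the argument the paper relies on. The paper quotes the statement from Oh--Park, noting that it follows from contractibility of the (affine) space of splittings, and the mechanism you use is the one it recalls from Cattaneo--Sch\"atz and LOTV in Theorem~\ref{theor:CS}: the path $\pi^*\omega-d\theta_{G_t}$ is turned into a pseudo-isotopy by treating a Moser vector field as a gauge transformation in the $V$-data framework.

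The obstacle you single out is harmless, because $\dot\theta_t$ vanishes pointwise along the zero section and hence so does $X_t$. This kills the curvature term $\mathfrak c^t_0$ and gives $[X_t,\ker\pi]\subset\ker\pi$ for the $V$-data projection $\pi$. The relation $\tfrac{d}{dt}\mathfrak l^t=[\mathfrak l^t,\mathfrak c^t]$ is just the polarized version of the identity used in Lemma~\ref{valinf}: the derived brackets of $[\Delta,\xi]$ are the commutator of those of $\Delta$ and $\xi$. So no fiberwise polynomiality of $X_t$ is needed at the formal level.
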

Many variations of this theorem have been proved both in the formal level and in the analytic
level in various more general context in the more systematic way \emph{utilizing Voronov's $V$-data
formalism}, but always with the null foliations of \emph{constant nullity}. 
(See \cite{cattaneo-schatz}, \cite{LOTV}, for example.) We will also need the 
corresponding result for the case of $L_\infty$ algebras for which we refer readers to
and \cite[Section 14]{fukaya:cyclic}, \cite[Defintion 3.35]{fukaya:immersed}
for the defintition of pseudo-isotopies, which can be easily adapted to the case of 
$L_\infty$ algebras by symmetrization.)

However, the current situation deals with a new more general 
situation different therefrom which considers geometric context  in that
we consider two presymplectic strata $(Y_\alpha, \omega_\alpha)$, $(Y_{\alpha'},\omega_{\alpha'})$
of \emph{different nullities} but with the compatibility relation
$$
\ker \left((\pi_\alpha^{\alpha'})^*(\omega_\alpha) \right) \supset \ker \omega_{\alpha'}.
$$
\begin{thm}\label{thm:gluing-morphisms}
Let $\{\CV_\alpha^\omega\}_{\alpha \in \mathfrak P}$ be a 
be a pull-back presymplectic atlas of the polarized presymplectic stratification
$$
\CP\CS{(M,\omega)}=\{(Y_\alpha, \omega_\alpha, G_\alpha)\}_{\alpha \in \mathfrak P}
$$
of $(M,\omega)$. Then the following holds:
\begin{enumerate}
\item For each consecutive pair $(\alpha,\alpha')$ with $\alpha < \alpha'$.
Then there exists an $L_\infty$ morphism 
$$
\mathfrak f^{\alpha\alpha'}: \mathfrak l^{\CV^\omega}_\alpha \to \mathfrak l^{\CV^\omega}_{\alpha'}.
$$
\item For each triple $(\alpha,\beta,\gamma)$ with $\alpha < \beta < \gamma$, the composition 
$\mathfrak f^{\beta\gamma} \circ \mathfrak f^{\alpha\beta}$ are defined
on an open subset $V_{\alpha\beta\gamma} \subset V_\alpha \cap V_\beta \cap V_\gamma$.
\item The two $L_\infty$ morphisms 
$\mathfrak f^{\beta\gamma} \circ \mathfrak f^{\alpha\beta}$ and $\mathfrak f^{\alpha\gamma}$
are canonically $L_\infty$ pseudo-isotopic.
\end{enumerate}
\end{thm}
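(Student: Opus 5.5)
The plan is to build $\mathfrak f^{\alpha\alpha'}$ as a composite of three maps: a pull-back, an isotopy-induced map, and a change-of-polarization map. Let $(\alpha,\alpha')$ be a consecutive pair. On the overlap $V_{\alpha\alpha'} = V_\alpha\cap Y_{\alpha'}$ we have two polarized presymplectic structures:
\begin{enumerate}
\item the pull-back $(\widetilde\omega_\alpha,\widetilde G_\alpha) = ((\pi_\alpha^{\alpha'})^*\omega_\alpha,(\pi_\alpha^{\alpha'})^*G_\alpha)$;
\item the intrinsic one $(\omega_{\alpha'},G_{\alpha'})$.
\end{enumerate}
By Sublemma \ref{sublem:include}, $T\CF_{\alpha'}=\ker\omega_{\alpha'}\subset \ker \widetilde\omega_\alpha = T\widetilde\CF_\alpha$. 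So the null foliation $\widetilde\CF_\alpha$ is coarser than $\CF_{\alpha'}$, and restriction of leafwise forms gives a map of complexes
$$
\mathfrak r^{\alpha\alpha'}:\Omega^*(\widetilde\CF_\alpha)\to \Omega^*(\CF_{\alpha'}), \qquad d_{\CF_{\alpha'}}\circ \mathfrak r = \mathfrak r\circ d_{\widetilde\CF_\alpha}.
$$
This is the linear part $\mathfrak f_1^{\alpha\alpha'}$. Precomposing with the pull-back $(\pi_\alpha^{\alpha'})^*:\Omega^*(\CF_\alpha)\to\Omega^*(\widetilde\CF_\alpha)$ gives a strict morphism $\mathfrak l_\alpha\to \mathfrak l^{\widetilde\omega_\alpha,\widetilde G_\alpha}$. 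This is an $L_\infty$ isomorphism onto its image, because the Gotay form of the pull-back is the pull-back of Gotay's form \eqref{eq:Gotay-form}, and the construction of Theorem \ref{thm:oh-park-intro} is natural under submersions preserving the polarization.

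To get the higher components, I would interpolate between $\widetilde\omega_\alpha$ and $\omega_{\alpha'}$ on $V_{\alpha\alpha'}$ along the family
$$
\omega_t = \omega_{\alpha'} + \chi(t)\,(\widetilde\omega_\alpha-\omega_{\alpha'}), \qquad t\in[0,1],
$$
where $\chi$ is flat at $t=1$ (it vanishes to infinite order there). The key point is to choose the tubular data of Mather's system so that $\omega_t$ has nullity $m(\alpha')$ for $t<1$ and drops rank only at $t=1$. The compatible polarizations of Proposition \ref{prop:Galpha'-Galpha} are then used to define a one-parameter family of splittings $G_t$, with $G_t\supset(\pi_\alpha^{\alpha'})^*G_\alpha$ on $(\pi_\alpha^{\alpha'})^*TY_\alpha$. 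Following the proof of \cite[Theorem 10.1]{oh-park:coisotropic} and Fukaya's pseudo-isotopy formalism, the family $\{\mathfrak l^{\omega_t,G_t}\}$ gives an $L_\infty$ structure on $\Omega^*(\CF_{\alpha'})\otimes\Omega^*([0,1))$. Integrating the time-dependent Maurer--Cartan/gauge flow then yields $\mathfrak f^{\alpha\alpha'}_k$, $k\ge 2$, as sums over trees with the $t$-derivative insertions.

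The main obstacle is the endpoint $t=1$, where the rank jumps and the Gotay neighborhoods change dimension. My approach there is to use the infinite-order flatness of $\chi$: the gauge terms involve $\dot\chi(t)$, which together with all its derivatives tends to zero as $t\to1$, so that the tree sums converge in the admissible smooth category of \cite[Section 25.1]{fooo:book-kuranishi}. The limit should be exactly the restriction map $\mathfrak r^{\alpha\alpha'}$ composed with the pull-back. This is where the map fails to be invertible, so $\mathfrak f^{\alpha\alpha'}$ is only a morphism and not a quasi-isomorphism. Statement (1) then follows by checking the $L_\infty$ relations termwise from $d\omega_t=0$ and the Schouten identity $[P_t,P_t]=0$.

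For (2), I would take $V_{\alpha\beta\gamma}$ to be the set where the compatibilities \eqref{eq:compatibility} hold simultaneously, namely $\pi_\alpha^\beta\circ\pi_\beta^\gamma=\pi_\alpha^\gamma$. This set is open and nonempty, by Mather's proposition together with Condition \ref{cond:triple-intersection}, after shrinking the scale functions. For (3), both $\mathfrak f^{\beta\gamma}\circ\mathfrak f^{\alpha\beta}$ and $\mathfrak f^{\alpha\gamma}$ arise from two-step and one-step interpolation paths from $\omega_\gamma$ to $(\pi_\alpha^\gamma)^*\omega_\alpha$. Both endpoints agree, by the compatibility above and by \eqref{eq:Galpha'-Galpha}. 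The two paths bound a two-parameter family $\omega_{s,t}$ of the same flat-at-the-boundary type, and the choice of splittings $G_{s,t}$ is contractible by Lemma \ref{lem:splittings}. Applying the parametrized version of the construction over $[0,1]^2$ then produces the canonical pseudo-isotopy; canonicity up to higher homotopy again uses the contractibility of splittings.
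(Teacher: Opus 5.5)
Your handling of the rank jump at $t=1$ rests on the wrong mechanism, and that jump is the whole content of Statement (1). With $\omega_t=\omega_{\alpha'}+\chi(t)(\widetilde\omega_\alpha-\omega_{\alpha'})$ you need $\chi(0)=0$ and $\chi(1)=1$. So it is $1-\chi$ that vanishes to infinite order at $t=1$, and the singular quantity is $(1-\chi)^{-1}$, not $\dot\chi$. Take the block-diagonal normal model $\omega_{\alpha'}=\widetilde\omega_\alpha+\sigma$, with $\sigma$ symplectic on the normal directions of $Y_\alpha$ in $Y_{\alpha'}$; here your constant-nullity requirement does hold. Then $\omega_t=\widetilde\omega_\alpha+(1-\chi)\sigma$, which has two consequences:
\begin{enumerate}
\item the transverse bivector $(\omega_t|_{G_t})^{-1}$ entering the higher brackets $\mathfrak l^{\omega_t,G_t}_k$ on the fixed space $\Omega^*(\CF_{\alpha'})$ has size $(1-\chi)^{-1}$;
\item the Moser/gauge generator is $\pm\frac{\dot\chi}{2(1-\chi)}$ times the Euler field of the normal fibres.
\end{enumerate}
Since $\int_0^T\frac{\dot\chi}{1-\chi}\,dt=-\log(1-\chi(T))\to\infty$ for every such $\chi$, no choice of $\chi$ makes the time-ordered tree sums converge. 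Flatness only makes things worse: if $1-\chi=e^{-1/(1-t)}$ near $t=1$, the generator is $(1-t)^{-2}$ times the Euler field. So you have established neither that a limit exists at $t=1$ nor that it equals $\mathfrak r\circ(\pi_\alpha^{\alpha'})^*$. Your Statement (3) runs the same mechanism over $[0,1]^2$ and inherits the gap.

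There is a second, independent gap: the claim that suitable Mather data make $\omega_t$ have nullity $m(\alpha')$ for all $t<1$. For a linear interpolation this is not automatic, even when the horizontal blocks agree. Take $\widetilde\omega_\alpha=dx\wedge dy$ and $\omega_{\alpha'}=dx\wedge dy+dz\wedge dw+\sqrt2\,(dx\wedge dz+dy\wedge dw)$. Then $\ker\widetilde\omega_\alpha\supset\ker\omega_{\alpha'}=0$, yet $\omega_t\wedge\omega_t=2(1-\chi)(2\chi-1)\,dx\wedge dy\wedge dz\wedge dw$ vanishes at $\chi=1/2$. Mather's control conditions say nothing about the mixed and vertical blocks of $\omega_{\alpha'}$, which is what governs this.

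The paper's route differs at exactly these two points. It takes $\omega_t$ to be the pull-back of $\omega_{\alpha'}$ along the retraction flow $R^t_\rho$ of the tubular function. Away from the endpoint this flow is a diffeomorphism onto its image, so the rank stays constant automatically, and its endpoint is the smooth map $R^1_\rho=\pi^{\alpha'}_\alpha$. The paper then attributes one-sided solvability of \eqref{eq:Cauchy} to the reparametrization $s=1/t$. Under it, using the exponential scaling of Lemma \ref{lem:deform-omega}, the generator picks up the flat factor $s^{-2}e^{-c_{\alpha\beta}/s}$. If you want to keep your framework, the flatness has to come from the contraction itself in this way, not from the interpolation coefficient.
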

The proof of this theorem will occupy  the next subsection, and the whole of Section \ref{sec:gluing}.

\subsection{Construction of $L_\infty$ homomorphisms and their composition}

For the proof of Statement (1) of Theorem , we start with the splitting
$$
TY_\alpha = G_\alpha \oplus T\CF_\alpha, \quad TY_{\alpha'} = G_{\alpha'} \oplus T\CF_{\alpha'}
$$
with the inclusion relations
$$
(\pi_\alpha^{\alpha'})^*T\CF_\alpha \supset T\CF_{\alpha'}, \quad (\pi_\alpha^{\alpha'})^*(G_\alpha) \subset G_{\alpha'}
$$
for the choice of $G_{\alpha'}$ given in  \eqref{eq:Galpha'toGalpha}.

We denote by $R_\rho^t$ the deformation retraction obtained by the gradient flow of the 
function $\rho$ in the system of tubular neighborhoods $(T_j,\pi_j, \rho_j)$.
Then by the quadratic property of the tubular function $\rho$  the family
$R_\rho^t$ is smooth on $[0,1]$ and the 
 compatibility condition \eqref{eq:compatibility} implies 
 $R_\rho^1 = \pi_\alpha^{\alpha'}$. It also implies that the flow $R_\rho^t$ is tangent to the fiber of
  the projection $\pi_\alpha^{\alpha'}$.

We then consider 
$$
\omega_t = \begin{cases} (R_\rho^t)_*\omega_{\alpha'}, \quad &  t \in (0,1]\\
(\pi_{\alpha}^{\alpha'})^*\omega_\alpha \quad & t=0.
\end{cases}
$$
where we have $R_\rho^0 = \id|_{Y_{\alpha'}}$ and $R_{\rho}^1 = \pi_{\alpha}^{\alpha'}$.
We also have $\omega_0 = \omega_{\alpha'}$ and 
$$
\omega_1 = (\pi_\alpha^{\alpha'})^*\omega_\alpha:
$$
This retraction $R_\rho^t$ is generated by the \emph{smooth vector field}
\be\label{eq:Yt}
Y_t = \begin{cases}
\frac{\del R_\rho^t}{\del t}\circ (R_\rho^t)^{-1} \quad  & t > 0 \\
0 \quad & t = 0
\end{cases}
\ee
which is smooth at $t = 0$ since $\rho$ satisfies $\rho(v) = \frac12\|v\|^2$ near $v = 0$
as given in \eqref{eq:rho}.

Since $\ker \widetilde \omega_\alpha \supset \ker \omega_{\alpha'}$, we have
$$
\ker \omega_t \supset \ker \omega_{\alpha'}
$$
for all $t \in [0,1]$ and $\ker \omega_t = \ker \omega_{\alpha'}$ for $t \in (0, 1]$.
In particular, the splitting \eqref{eq:Galpha'toGalpha}
$$
G_{\alpha'} = (\pi_\alpha^{\alpha'})^*TY_\alpha  \oplus N\left(T\CF_{\alpha'};(\pi_\alpha^{\alpha'})^*T\CF_{\alpha}\right)
$$
is invariant under the flow of $R_\rho^t$ on $V_\alpha \cap V_{\alpha'}$. 

Then we will apply Moser's stability theorem. For this purpose, as usual we
set and differentiate the equation $(\phi^t)^*\omega_t = \omega_{\alpha'}$ 
which we solve by integrating its generating vector field
$$
X_t = \frac{\del \phi^t}{\del t} \circ (\phi^t)^{-1}.
$$
This vector field is determined by the equation
$$
X_t \intprod \omega_t + \frac{d \omega_t}{d t} = 0
$$
which is obtained by differentiating the equation $(\phi^t)^*\omega_t = \omega_{\alpha'}$ in $t$.
On the other hand, we have
$$
\frac{d \omega_t}{d t} =\frac{d}{dt} (R_\rho^t)^*\omega_{\alpha'} 
= (R_\rho^t)^*d\left(Y_t \intprod \omega_{\alpha'}\right)
= d\left((R_\rho^t)^*\left(Y_t \intprod \omega_{\alpha'}\right)\right)
$$
and hence
\be\label{eq:omegaalpha'-alpha}
\omega_t =  \omega_{\alpha'} + d \beta_t
\ee
with the one-form
$$
\beta_t = \int_0^t (R_\rho^u)^*\left(Y_u \intprod \omega_{\alpha'}\right) \, du.
$$
Therefore we obtain
\be\label{eq:Moser-equation}
X_t \intprod \omega_t = - \frac{d \omega_t}{dt} = - \beta_t.
\ee
Since $\ker \omega_t = \ker (\pi_\alpha^{\alpha'})^*\omega_\alpha$ for $t > 0$
and $\ker \omega_0 = \ker \omega_{\alpha'}$, we may put
$$
X_t \in G_{\alpha'} = (\pi_\alpha^{\alpha'})^*TY_\alpha  \oplus 
N\left(T\CF_{\alpha'};(\pi_\alpha^{\alpha'})^*T\CF_{\alpha}\right).
$$
Therefore we can uniquely solve \eqref{eq:Moser-equation} provided
$$
\beta_t\big|_{(\pi_\alpha^{\alpha'})^*N(Y_\alpha;Y_{\alpha'})} = 0:
$$
 We note that $Y_t$ is tangent to the fiber of $\pi_\alpha^{\alpha'}$  and hence $Y_t \intprod \omega_{\alpha'} = 0$ on $\ker \omega_{\alpha'} \supset (\pi_\alpha^{\alpha'})^*N(Y_\alpha;Y_{\alpha'})$.
Since $R_\rho^t$ is generated by $Y_t$, the same vanishing holds for $\beta_t$,
 which concludes that we can
solve the equation \eqref{eq:Moser-equation}. \emph{Finally we apply the time-reversal $t \mapsto 1-t$}.

In summary, we have achieved the identity \eqref{eq:omegaalpha'-alpha} with the 
above described properties and \emph{the time $t$ replaced by $1-t$}. 
Then Statement (1) follows from the following general result,
whose proof we postpone till the next section.
\begin{prop}\label{prop:Linfty-morphism}
Let $\omega_1 = \omega_0 + d\beta$ with $\ker \omega_1 \supset \ker \omega_0$ and $G_{\alpha'}$ is
invariant under the flow $ (R_t^\rho)$.
 Then there is an $L_\infty$ homomorphism
$$
\mathfrak f^{10}: \mathfrak l^{\CV^{\omega_1}} \to \mathfrak l^{\CV^{\omega_0}}.
$$
\end{prop}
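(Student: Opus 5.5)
We construct $\mathfrak f^{10}$ geometrically, by comparing the two Gotay neighborhoods through the flow $(R_\rho^t)$, rather than building the Taylor coefficients by hand. The point to exploit is $\ker\omega_1\supset\ker\omega_0$. It gives an inclusion of null foliations $T\CF_{\omega_0}\subset T\CF_{\omega_1}$. Restricting foliated forms then gives a surjection of bundles $T^*\CF_{\omega_1}\to T^*\CF_{\omega_0}$. Dually, it gives a restriction map of foliation de Rham complexes
$$
r:\Omega^*(\CF_{\omega_1})\to \Omega^*(\CF_{\omega_0}),\qquad r\circ d_{\CF_{\omega_1}} = d_{\CF_{\omega_0}}\circ r .
$$
This map is well defined because a leaf of $\CF_{\omega_0}$ lies in a leaf of $\CF_{\omega_1}$. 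By Lemma \ref{lem:strlinf}, $\mathfrak l_1$ is the foliated differential on both sides, so $r$ is a chain map. We take $\mathfrak f^{10}_1=r$ as the linear part.

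The higher terms come from the Poisson brackets. For each $i$, the brackets $\mathfrak l_k^{\omega_i}$ arise, in the Cattaneo--Sch\"atz description recalled in the review section, from derived brackets of the Poisson bivector $P_i$ of Gotay's form $\pi^*\omega_i-d\theta_{G_i}$ on $T^*\CF_{\omega_i}$. These are taken with respect to the fiberwise-polynomial projection onto the zero section. The plan has three steps.
\begin{enumerate}
\item Use $\omega_1=\omega_0+d\beta$, with $\beta$ vanishing on the normal directions as arranged above, together with the invariance of $G_{\alpha'}$ under $R_\rho^t$. From these, build a fiber-preserving map $\Psi$ from a neighborhood of the zero section in $T^*\CF_{\omega_1}$ to one in $T^*\CF_{\omega_0}$: the bundle restriction corrected by the Moser flow $\phi^t$ already produced.
\item Verify that $\Psi$ is a Poisson map, i.e. $\Psi_*P_1=P_0$ along the relevant coisotropic locus. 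This is the nondegenerate part of the Moser argument, applied to the Gotay forms instead of to $\omega_t$. One also needs that $\Psi$ sends the zero section to the zero section and preserves the splitting data $G$, which is where the invariance hypothesis enters.
\item Apply the general principle for $V$-data (Voronov, as used in \cite{cattaneo-schatz}, \cite{LOTV}). A Poisson map compatible with the abelian subalgebras and projections induces a morphism of the derived $L_\infty[1]$ algebras. Its components $\mathfrak f_k^{10}$ are the Taylor coefficients of the pull-back $\Psi^*$ in the fiber variables, restricted to the zero section. The $L_\infty$ relations $\sum \mathfrak f\circ\mathfrak l^{\omega_1}=\sum\mathfrak l^{\omega_0}\circ(\mathfrak f\otimes\cdots\otimes\mathfrak f)$ then follow formally from $[P_i,P_i]=0$ and the Poisson property. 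If steps 1--2 only hold to first order, we would instead define $\mathfrak f^{10}$ as a formal power series in the fiber coordinates and check the relations order by order.
\end{enumerate}
As a consistency check, the MC elements should transform correctly: a coisotropic section for $\omega_1$ maps under $\Psi$ to one for $\omega_0$, so $\mathfrak f^{10}_*$ sends solutions of $\operatorname{MC}=0$ in Theorem \ref{thm:oh-park} for $\omega_1$ to solutions for $\omega_0$.

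The main obstacle is step 2. The two Gotay neighborhoods have different fiber ranks, so $\Psi$ cannot be a symplectomorphism, only a Poisson-type projection. One must show that the $P_1$-brackets of fiberwise-linear functions pulled back from $T^*\CF_{\omega_0}$ close up correctly, and the extra directions $(\pi_\alpha^{\alpha'})^*T\CF_\alpha/T\CF_{\alpha'}$ must contribute only terms killed by the projection. The first attempt would be to work in adapted coordinates in which $G_{\alpha'}=(\pi_\alpha^{\alpha'})^*G_\alpha\oplus N$. In these coordinates the extra fiber directions are Poisson-commuting with the rest modulo the ideal of the zero section, and the compatibility $d\pi(T\CF_{\alpha'})\subset T\CF_\alpha$ gives the required triangularity. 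A second concern is smoothness at the endpoint where the rank drops, since $\ker\omega_t$ jumps there. Here we would use the smoothness of the generating field $Y_t$ at $t=0$ noted in \eqref{eq:Yt}, together with the time reversal, to ensure that the coefficients of $\mathfrak f^{10}$ extend smoothly.
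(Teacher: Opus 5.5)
Your Step 2 cannot be carried out. The failure is at order zero in the fibre variables, so the formal fallback in Step 3 does not help. At a point $p$ of the zero section, $T_p(T^*\CF_{\omega_i})=T_pY\oplus T^*_p\CF_{\omega_i}$. There the Gotay form $\pi^*\omega_i-d\theta_{G_i}$ is $\omega_i|_{G_i}$ plus the canonical pairing of $T_p\CF_{\omega_i}$ with $T^*_p\CF_{\omega_i}$. Hence the $\Lambda^2T_pY$-component of $P_i|_p$ is $(\omega_i|_{G_i})^{-1}$, which has rank $\operatorname{rank}\omega_i$. Your $\Psi$ is fibre-preserving and maps the zero section to itself, as Step 3 requires for compatibility with $\mathfrak a_i$ and $\pi_i$. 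So $d\Psi_p$ is block diagonal, and the $\Lambda^2TY$-component of $\Psi_*P_1$ has rank at most $\operatorname{rank}\omega_1<\operatorname{rank}\omega_0$ whenever $\ker\omega_1\supsetneq\ker\omega_0$. Thus $\Psi_*P_1\neq P_0$ on the zero section, whatever Moser correction you build into $\Psi$.

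Step 3 is also misdescribed. $\Psi^*$ maps functions on $T^*\CF_{\omega_0}$ to functions on $T^*\CF_{\omega_1}$, which is the wrong direction, and fibrewise polynomial functions are sections of $\mathrm{Sym}\,T\CF$, not foliated forms. What would carry $\mathfrak a_1=\Gamma(\wedge T^*\CF_{\omega_1})$ to $\mathfrak a_0$ (namely by $r$) is push-forward of $\Psi$-projectable multivector fields. That is exactly the operation that needs $\Psi_*P_1=P_0$.

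The same obstruction has an algebraic form, and it rules out your linear part $\mathfrak f_1=r$ independently of any geometry. For $f,g\in\Omega^0$ (degree $-1$ after the shift), $\mathfrak f_2(f,g)$ has degree $-2$ and so vanishes. Hence, with $\mathfrak f_0=0$, the arity-two $L_\infty$ relation for $\CF_{\omega_1}$-basic $f,g$ reads $\mathfrak l^{\omega_1}_2(f,g)=\mathfrak l^{\omega_0}_2(f,g)$, i.e. $\{f,g\}_{\omega_1}=\{f,g\}_{\omega_0}$. This fails in general. On $\R^4$ take $\omega_0=da_1\wedge da_2+dx_1\wedge dx_2+c\,da_1\wedge dx_1+c'\,da_2\wedge dx_2$ with $cc'\notin\{0,1\}$, and $\omega_1=da_1\wedge da_2=\pi^*\iota^*\omega_0$ for $\pi(a,x)=a$, which is exactly the paper's pull-back situation. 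Then $\omega_1-\omega_0$ is exact and $\ker\omega_1\supset\ker\omega_0=0$, but $\{a_1,a_2\}_{\omega_0}=\{a_1,a_2\}_{\omega_1}/(1-cc')$. So the jump in rank cannot be realised by an honest Poisson map between the two Gotay neighbourhoods, and $\mathfrak f_1$ cannot be the bare restriction.

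The paper avoids needing such a map. It encodes both structures as Maurer--Cartan elements of Voronov $V$-data and interpolates by the bivector fields $Z_t$ attached to $\omega_t=(R^t_\rho)^*\omega$. It then obtains $\mathfrak f^{10}$ from a Cattaneo--Sch\"atz gauge flow $\frac{d}{dt}\Delta_t=[\xi_t,\Delta_t]$, which after the reparametrisation $s=1/t$ it integrates only in the forward direction. The rank therefore changes only in the degenerate limit, which is also why the resulting morphism is not claimed to be a quasi-isomorphism. Some such limiting mechanism, rather than a Poisson map at the endpoint, is the idea your proposal is missing.
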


Statement (2) of Theorem \ref{thm:gluing-morphisms}
 immediately follows since on $V_\alpha \cap V_\beta \cap V_\gamma$ 
all three morphisms $f^{\alpha\beta}, \, f^{\beta\gamma}$ and $f^{\alpha\gamma}$ are 
defined.

For Statement (3), we will again utilize Proposition \ref{prop:Linfty-morphism}
by connecting the composition 
$\mathfrak f^{\beta\gamma} \circ \mathfrak f^{\alpha\beta}$ and $\mathfrak f^{\alpha\gamma}$
by a one-parameter family $\mathfrak f_t$ that satisfies
\be\label{eq:bdy-condition}
\mathfrak f_0 = \mathfrak f^{\beta\gamma} \circ \mathfrak f^{\alpha\beta}, \quad
\mathfrak f_1 = \mathfrak f^{\alpha\gamma}.
\ee
Note that in this case, both morphisms have the common domain and the codomain.

\section{Gluing of $L_\infty$ structures into a stratified $L_\infty$ space}
\label{sec:gluing}

To explicitly describe the composition $\mathfrak f^{\beta\gamma} \circ \mathfrak f^{\alpha\beta}$
in terms of the underlying presymplectic geometry, we will utilize Cattaneo-Sch\"atz 
description of $L_\infty[1]$ algebras in terms of the $V$ algebra data. Then we closely follow 
the explanation of \cite[Section 3.5]{LOTV} on how the geometric 
deformation of presymplectic forms $\omega_t$ induces the associated family $\mathfrak f_t$
of $L_\infty$ morphisms, but \emph{allowing the circumstance 
where the nullity drops down  (or equivalently the rank jumps up) at $t = 0$.}

\subsection{Construction of gauge-transformation}

We recall the exposition of \cite{cattaneo-schatz} and \cite{LOTV} here.
We start with recalling the definition of the \emph{$V$-data}, following
the terminology introduced in \cite[Section 1.2, Lemma 2.2]{fregier-zambon} 
and \cite{cattaneo-schatz}
which summarizes Voronov's construction. (See \cite{voronov1,voronov2}.
We alert readers that \cite{fregier-zambon} denotes our $\pi$ here by $P$.)

\begin{defn}[$V$-data]
Consider a triple $(\mathfrak{h}, \mathfrak{a}, \pi, \Delta)$ such that
\begin{enumerate}
\item $\mathfrak{h}$ is a graded Lie algebra over a field $\mathbb{K}$,
\item $\mathfrak{a}$ is an abelian subalgebra of $\mathfrak{h}$,
\item We have a splitting $\mathfrak h \simeq \mathfrak{a} \oplus \mathfrak n$,
\item $\pi : \mathfrak{h} \rightarrow \mathfrak{a}$ is the associated projection.
\end{enumerate}
An element  $\Delta  \in \mathfrak{h}$ is called a \emph{Maurer-Cartan element}
if it has degree 1 and satisfies  $[\Delta,\Delta]=0$. 
The quadruple $(\mathfrak{h}, \mathfrak{a}, \pi, \Delta)$ 
is called a \textit{V-data}. 
\end{defn}

We denote by $i: \mathfrak a \to \mathfrak h$ and $\pi: \mathfrak h \to \mathfrak a$ the inclusion and
the projection operators.
Now we define a family of operators $l^{P}_k : \mathfrak{a}^{\otimes k} \rightarrow \mathfrak{a}$ defined by
\bea
(x_1, \cdots x_k) &\mapsto&  \pi[ \cdots [[\Delta, i(x_1)], i(x_2)], \cdots, i(x_k)], \quad \text{if }k \geq 1,\label{eq:k>1}\\
1 &\mapsto&  \pi \Delta,  \quad \quad \quad \quad \quad \quad \quad \quad \quad \quad \text{if } k = 0. \label{eq:k=1}
\eea
\begin{lem}\label{valinf}
The family $\{l^\Delta_k\}_{k \geq 0}$ forms a curved $L_{\infty}[1]$-algebra.
\end{lem}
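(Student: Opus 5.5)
The plan is Voronov's derived-bracket argument. We encode the operations $l^\Delta_k$ as derived brackets of $\Delta$ and deduce the generalized Jacobi identities from $[\Delta,\Delta]=0$ together with the facts that $\mathfrak a$ is abelian and that $\ker \pi = \mathfrak n$ is a \emph{subalgebra}. I expect the latter to be part of the intended hypotheses: in the applications $\mathfrak n$ is the ideal generated by the coordinate functions of the zero section, and I will assume it explicitly.

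The first step is to pass to the shifted setting. Put $\mathfrak h[1]$ and regard $\Delta$ as a degree $1$ element with $[\Delta,\Delta]=0$, so that $D:=[\Delta,\cdot\,]$ is a degree one derivation with $D^2=\tfrac12[[\Delta,\Delta],\cdot\,]=0$. Then we check graded symmetry of $l^\Delta_k$ on $\mathfrak a[1]$. Since $\mathfrak a$ is abelian, the Jacobi identity gives
\[
[[X,a],b]-(-1)^{|a||b|}[[X,b],a]=[X,[a,b]]=0
\]
for $a,b\in\mathfrak a$. Hence the iterated bracket $[\cdots[[\Delta,a_1],a_2]\cdots,a_k]$ is graded symmetric in the $a_i$, with degree shifts handled by Koszul signs.

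The core step is the identity
\[
\sum_{i+j=k}\sum_{\sigma\in Sh(i,j)}\pm\, l^\Delta_{j+1}\bigl(l^\Delta_i(a_{\sigma(1)},\dots,a_{\sigma(i)}),a_{\sigma(i+1)},\dots\bigr)=0,
\]
including the $i=0$ term coming from $\pi\Delta$. For this I would use the generating-function trick. For $a\in\mathfrak a$ of total degree $0$ (after tensoring with an auxiliary graded commutative algebra to make formal parameters), set $\Delta_a:=e^{-\mathrm{ad}_a}\Delta$. Then $\pi\Delta_a=\sum_k \tfrac{(\pm1)^k}{k!}\,l^\Delta_k(a,\dots,a)$, and $[\Delta_a,\Delta_a]=e^{-\mathrm{ad}_a}[\Delta,\Delta]=0$. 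Writing $\Delta_a=\pi\Delta_a+(1-\pi)\Delta_a$ and projecting $[\Delta_a,\Delta_a]$ to $\mathfrak a$, we get
\[
0=\pi[\Delta_a,\Delta_a]=2\pi[\Delta_a,\pi\Delta_a]+\pi[(1-\pi)\Delta_a,(1-\pi)\Delta_a].
\]
Here $[\pi\Delta_a,\pi\Delta_a]=0$ because $\mathfrak a$ is abelian. The last term vanishes because $\mathfrak n$ is a subalgebra. Next we expand $\pi[\Delta_a,b]$ with $b=\pi\Delta_a\in\mathfrak a$: it equals $\sum_n \tfrac{1}{n!}\,l^\Delta_{n+1}(b,a,\dots,a)$ up to signs, again using that $\mathfrak a$ is abelian to move the $\mathrm{ad}_a$'s past $b$. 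The result is exactly the diagonal (all $a_i=a$) form of the $L_\infty[1]$ relations. Graded symmetry established above then lets us polarize and recover the full multilinear identities.

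The main obstacle is bookkeeping: the signs and degree shifts in passing from $\mathfrak h$ to $\mathfrak a[1]$, and justifying the polarization step. The latter needs the auxiliary parameter algebra, since degree-$0$ elements alone do not detect odd inputs. A second point is to make the subalgebra hypothesis on $\ker\pi$ explicit, because without it the cross term survives and the lemma fails. I would first try to cite Voronov's higher derived brackets theorem directly, as \cite{voronov1} and \cite{cattaneo-schatz} do, and use the computation above only as the verification.
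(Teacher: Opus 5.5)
Your proposal is correct and follows the paper's route. The paper's one-line proof is Voronov's identity that the Jacobiators of the derived brackets equal $l^{\frac12[\Delta,\Delta]}_n$, and your generating-function computation is exactly that identity in diagonal form (without imposing $[\Delta,\Delta]=0$ it reads $2\pi[\Delta_a,\pi\Delta_a]=\pi e^{-\mathrm{ad}_a}[\Delta,\Delta]$), followed by polarization; you are also right that $\ker\pi=\mathfrak n$ must be a Lie subalgebra, a condition the paper's definition of $V$-data omits but which Voronov's identity, and hence the lemma, requires.
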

\begin{proof}
Observe that the Jacobiator for each $n$ is given by $l^{\frac{1}{2}[\Delta,\Delta]}_n \equiv 0.$
\end{proof}
 Then Voronov's construction associates 
a curvature-free $L_\infty[1]$-algebra to each such $V$-data.

Then we have the following result.
\begin{thm}[Theorem 3.2 \cite{cattaneo-schatz}, \, Theorem 3.19. \cite{LOTV}]\label{theor:CS}
Let $(\mathfrak h,\mathfrak a, \pi, \Delta_0)$ and $(\mathfrak h, \mathfrak a, \pi, \Delta_1)$ be $V$-data, and let $\mathfrak a_0$ and $\mathfrak a_1$ be the associated $L_\infty[1]$-algebras. If $\Delta_0$ and $\Delta_1$ are gauge equivalent and they are intertwined by a gauge transformation preserving $\ker \pi$, then $\mathfrak a_0$ and $\mathfrak a_1$ are $L_\infty$-isomorphic.
\end{thm}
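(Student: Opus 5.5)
The plan is to follow the Cattaneo--Sch\"atz/LOTV argument. The gauge equivalence between $\Delta_0$ and $\Delta_1$ should be realized as an exponential of an inner derivation, and the resulting automorphism of $\mathfrak h$ then transported to $\mathfrak a$ through the projection $\pi$. Concretely, gauge equivalence means there is a degree zero element $\xi\in\mathfrak h$, lying in a pronilpotent part of $\mathfrak h$ or in a completion in which the relevant series converge, such that
\[
\Delta_1 = e^{[\xi,\cdot\,]}\Delta_0 .
\]
More generally, $\Delta_1$ is the time-one value of a path $\Delta_t$ solving $\frac{d}{dt}\Delta_t = [\xi_t,\Delta_t]$. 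Write $\Phi = e^{[\xi,\cdot\,]}$. This is an automorphism of the graded Lie algebra $\mathfrak h$ with $\Phi(\Delta_0)=\Delta_1$. The hypothesis that the gauge transformation preserves $\ker\pi$ gives $\Phi(\ker\pi)\subset\ker\pi$, or equivalently $[\xi,\ker\pi]\subset\ker\pi$.

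First, I will use this to define the linear part of the candidate morphism:
\[
\mathfrak F_1 := \pi\circ\Phi\circ i : \mathfrak a\to\mathfrak a .
\]
Since $\Phi$ is invertible and preserves $\ker\pi$, the map $\pi\Phi i$ is invertible, with inverse $\pi\Phi^{-1}i$. Indeed, $\Phi^{-1}$ also preserves $\ker\pi$, so
\[
\pi\Phi^{-1} i\,\pi\Phi i = \pi\Phi^{-1}\Phi i = \mathrm{id},
\]
using that $i\pi - \mathrm{id}$ maps into $\ker\pi$ and $\Phi$ preserves $\ker\pi$. Hence whatever $L_\infty$ morphism I build with this linear part will automatically be an $L_\infty$ isomorphism.

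Next, I will construct the higher components $\mathfrak F_k$. The natural formula is a sum over nested brackets of $\xi$ with the inputs $i(x_1),\dots,i(x_k)$, projected by $\pi$. The cleanest route is to treat the flow path $\Delta_t$ and define $\mathfrak F^t$ by integrating an ODE. Its generator is the $L_\infty$ derivation induced by $\xi_t$, namely
\[
x_1\cdots x_k \mapsto \pi[\cdots[\xi_t,i(x_1)],\dots,i(x_k)].
\]
The $V$-data formula \eqref{eq:k>1}, with $\Delta$ replaced by $\xi_t$, defines a coderivation $D_t$ of the symmetric coalgebra $S^c(\mathfrak a[1])$. In this picture the structure coderivation $Q_t$ attached to $\Delta_t$ satisfies
\[
\frac{d}{dt} Q_t = [D_t, Q_t].
\]
This is the key identity, and it is where preservation of $\ker\pi$ enters. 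It lets the terms of $[\xi_t,[\cdots[\Delta_t,i x_1]\cdots]]$ that are split through $i\pi$ and through $\ker\pi$ be regrouped into the commutator of coderivations: the $\ker\pi$-components are killed by the final $\pi$ even after bracketing with $\xi_t$, and $\mathfrak a$ is abelian. Solving the linear ODE $\frac{d}{dt}\mathcal F^t = D_t\circ\mathcal F^t$ on the coalgebra, with $\mathcal F^0=\mathrm{id}$, produces coalgebra morphisms with $\mathcal F^t\circ Q_0 = Q_t\circ\mathcal F^t$. At $t=1$ this gives the desired morphism.

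The main obstacle is verifying the identity $\frac{d}{dt}Q_t=[D_t,Q_t]$ combinatorially. It requires expanding the Jacobi identity in $\mathfrak h$ and tracking how $\pi$ interacts with nested brackets, using both the abelianness of $\mathfrak a$ and $[\xi_t,\ker\pi]\subset\ker\pi$. I would first check it for $k=0,1$ by hand and then argue by the symmetric-coalgebra (Voronov derived bracket) formalism. A secondary issue is the convergence or completeness needed to integrate the flow. In the present geometric setting this is handled either formally or fiberwise-entire as in \cite{LOTV}, and I would assume a complete filtration on $\mathfrak h$ compatible with $\xi$.
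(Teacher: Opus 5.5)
Your strategy is the Cattaneo--Sch\"atz argument that the paper only invokes by citation: differentiate the derived-bracket coderivation $Q_t$ along the gauge path $\Delta_t$, show $\frac{d}{dt}Q_t=[D_t,Q_t]$ for the coderivation $D_t$ of $\xi_t$ with no $0$-ary part, and integrate. The key identity is correct, and you locate condition (2) correctly.

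There are two inputs to the identity. First, $[Q_{\xi_t},Q_{\Delta_t}]=Q_{[\xi_t,\Delta_t]}$ holds for every $\xi_t$ by Voronov's homomorphism property. This needs $\mathfrak a$ abelian \emph{and} $\ker\pi$ a Lie subalgebra, and your sketch should invoke the latter explicitly: already at $k=1$ a term $\pi[(1-\pi)\Delta_t,(1-\pi)[\xi_t,x]]$ must vanish. Second, $[\xi_t,\ker\pi]\subset\ker\pi$ gives $\pi[Y,\pi\xi_t]=0$ for all $Y$. So the constant piece $\pi\xi_t$ commutes with $Q_t$ and can be dropped, which is what makes the flow an honest $L_\infty$ morphism.

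The gap is that you never use condition (1), the requirement that $\frac{d}{dt}a_t=\pi[a_t,\xi_t]$, $a_0=0$ has only the trivial solution in $\mathfrak a$. In the paper, ``intertwined by a gauge transformation preserving $\ker\pi$'' means (1) \emph{and} (2). You replace (1) by a complete filtration on $\mathfrak h$, which is not among the hypotheses, so as written you prove only the formal version. In the smooth setting the statement is meant for (multivector fields, foliated forms), linear Cauchy problems on $\mathfrak a$ need not have unique solutions. Condition (1) is exactly the uniqueness your argument relies on silently, in two places.

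\emph{(i) Preserving $\ker\pi$ and the linear part.} The property $\phi_t(\ker\pi)\subset\ker\pi$ that your linear-part discussion uses does not follow from (2) alone. (Your ``or equivalently'' also holds only in the direction $[\xi,\ker\pi]\subset\ker\pi\Rightarrow e^{\mathrm{ad}_\xi}(\ker\pi)\subset\ker\pi$.) For $k\in\ker\pi$, condition (2) shows that $a_t=\pi\phi_t(k)$ solves precisely the Cauchy problem in (1), and only (1) forces $a_t\equiv 0$. Likewise, identifying the linear part of $\mathcal F^t$ with $\pi\phi_t i$ is a uniqueness statement for a linear Cauchy problem on $\mathfrak a$. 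Here $\phi_t$ is the time-ordered flow, so this is not $\pi e^{\mathrm{ad}_\xi}i$ unless $\xi_t$ is constant.

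\emph{(ii) The intertwining relation.} To get $\mathcal F^tQ_0=Q_t\mathcal F^t$ you must show that $E_t=\mathcal F^tQ_0-Q_t\mathcal F^t$ vanishes. It satisfies $\dot E_t=D_tE_t$ and $E_0=0$. By induction on word length, each value $E^t_n(x_1,\dots,x_n)\in\mathfrak a$ solves the linear Cauchy problem driven by $D^1_t=\pi[\xi_t,\cdot\,]$ with zero initial value. Again it is (1), with the sign convention for $\phi_t$ matched to yours, that makes it zero.

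Existence of $\mathcal F^t$ in the non-formal case should likewise be taken from the assumed integration of $\xi_t$ to automorphisms $\phi_t$, not from pronilpotence. Take $F^t_1=\pi\phi_t i$ and obtain the higher components by variation of constants. With (1) doing the work you assigned to the filtration, your argument addresses the statement as formulated.
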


Cattaneo and Sch\"atz's main idea of the proof of this theorem is to prove that when
\begin{itemize}
\item $\Delta_0$ and $\Delta_1$ are gauge equivalent elements of the graded Lie algebra $\mathfrak h$, and
\item they are intertwined by a gauge transformation preserving $\ker \pi$,
\end{itemize}
then $\mathfrak a_0$ and $\mathfrak a_1$ are $L_\infty$-isomorphic. Specifically, they showed that
 $\Delta_0$ and $\Delta_1$ are \emph{gauge equivalent} if they are interpolated by a smooth family $\{ \Delta_t \}_{t \in [0,1]}$ of elements $\Delta_t \in \mathfrak h$, and there exists a smooth family $\{ \xi_t \}_{t \in [0,1]}$ of degree zero elements $\xi_t \in \mathfrak h$ such that 
 $\Delta_t$ satisfies
 \be\label{eq:gaugeMC00}
\frac{d}{dt} \Delta_t = [\xi_t , \Delta_t].
\ee
One usually assumes that the family $\{ \xi_t \}_{t \in [0,1]}$ integrates to a family $\{ \phi_t \}_{t \in [0,1]}$ of automorphisms $\phi_t : \mathfrak h \to \mathfrak h$ of the Lie algebra $\mathfrak h$, i.e.~$\{ \phi_t \}_{t \in [0,1]}$ is a solution of the Cauchy problem
\begin{equation}
\label{eq:Cauchy}
\left\{
\begin{aligned}
& \frac{d}{dt}\phi_t (-)=[\phi_ t (-), \xi_t] \\
& \phi_0= \id
\end{aligned}
\right. .
\end{equation}

Finally we say that $\Delta_0$ and $\Delta_1$ are intertwined by a gauge transformation preserving $\ker \Pi$ if family $\{ \xi_t \}_{t \in [0,1]}$ above satisfies the following conditions:
\begin{enumerate}
\item the only solution $\{ a_t \}_{t \in [0,1]}$, where $a_t \in \mathfrak a$, of the Cauchy problem
\begin{equation}
\label{eq:equivalence_2a}
\left\{
\begin{aligned}
& \frac{d}{dt}a_t =\pi [a_t , \xi_t] \\
& a_0 =0
\end{aligned}
\right.
\end{equation}
is the trivial one: $a_t = 0$ for all $t \in [0,1]$,
\item $[ \xi_t, \ker \Pi_t] \subset \ker \Pi_t$ for all $t \in [0,1]$.
\end{enumerate}

Now  unlike \cite{LOTV} or \cite{voronov1}, we allow to vary all arguments of the quadruple 
for $i = 0, \, 1$ where they vary only the derivation $\Delta$ fixing the rest. 
In that context, we will apply Cattaneo-Sch\"atz's construction after some modification.

We go back to the proof of Statement (3) of Theorem \ref{thm:gluing-morphisms}. 
To explicitly describe the composition $\mathfrak f^{\beta\gamma} \circ \mathfrak f^{\alpha\beta}$
in terms of the underlying presymplectic geometry, we will utilize Cattaneo-Sch\"atz's
description of $L_\infty[1]$ algebras in terms of the $V$ data aforementioned in the beginning of
the present subsection. Then we closely follow 
the explanation of \cite[Section 3.5]{LOTV} on how the geometric 
deformation of presymplectic forms $\omega_t$ induces the associated family $\mathfrak f_t$
of $L_\infty$ morphisms, but \emph{allowing the circumstance 
where the nullity drops down  (or equivalently the rank jumps up) at $t = 0$.} 

\subsection{Bivector field $Z_\alpha$ associated to presymplectic form $\omega_\alpha$}

 Let $E_i = T\CF_i$ associated to the presymplectic
strata $Y_0$ and $Y_1$. We will then consider
 two first order differential operators  $\Delta_i: \Gamma(E_i) \to \Gamma(E_i)$
  for $i = 0, \, 1$ arising from certain bivector fields $Z_\alpha$ associated to 
polarized presymplectic manifolds $(Y_\alpha,\omega_\alpha,G_\alpha)$.

We start with the splittings
$$
TY_\alpha = G_\alpha \oplus T\CF_\alpha.
$$
This splitting also induces the splitting
\be\label{eq:Lambda2TM}
\Lambda^2(TY_\alpha) = \Lambda^2(G_\alpha) \oplus (G \otimes T\CF_\alpha)
\oplus (T\CF_\alpha \otimes G) \oplus \Lambda^2(T\CF_\alpha)
\ee
and similar decomposition holds for $\Lambda^2(T^*Y_\alpha)$.

Recalling $\omega_\alpha$ is nondegenerate on $G_\alpha$, 
this decomposition induces an isomorphism
$$
\Lambda^2(G_\alpha^*) \to \Lambda^2(G_\alpha).
$$
In particular, we denote by 
$$
(\omega_\alpha|_{G_\alpha})^{-1} \in  \Lambda^2(G_\alpha) 
$$
the bivector field associated to $\omega|_{G_\alpha} \in \Lambda^2(G_\alpha^*)$. Now we are ready to
write down $Z_\alpha^\#$ explicitly as follows.

\begin{defn}[Bivector field $Z_\alpha$] We first define a bivector field
\be\label{eq:Zsharpalpha}
Z^\sharp_\alpha = (\omega|_{G_\alpha})^{-1} \oplus 0 \oplus 0 \oplus 0
\ee
on $Y_\alpha$ according to the decomposition \eqref{eq:Lambda2TM}. 
We then further extend the bivector field $Z^\sharp_\alpha$ to 
one $Z_\alpha$ defined on the tubular neighborhood  $V_\alpha \subset M$ as follows:
We take any splitting
\be\label{eq:splitting-alpha}
TM|_{V_\alpha} = (\pi_\alpha)^*(TY_\alpha) \oplus  (\pi_\alpha)^*(N(Y_\alpha;M))
\ee
and similarly  put
\be\label{eq:Zalpha}
Z_\alpha(x) =  \left(d_x\pi_\alpha|_{ (\pi_\alpha)^*(TY_\alpha)} \right)^{-1}(Z^\sharp_\alpha(\pi_\alpha(x))) \oplus 0
\oplus 0 \oplus 0
\ee
according to the decomposition of $\Lambda^2(TV_\alpha)$ induced by \eqref{eq:splitting-alpha}.
Here  $N(Y_\alpha;M)$ is any geometric representative of the normal bundle 
$T_{Y_\alpha}M$ of $Y_\alpha$ in $M$.
\end{defn}

We apply the above construction to the two families of two-forms
$$
\omega_t^{\alpha\beta}= (R_{\rho_\alpha}^t)^*\omega_\beta, \quad 
\omega_t^{\beta\gamma}= (R_{\rho_\beta}^t)^*\omega_\gamma
$$
for each triple $\alpha < \beta < \gamma$ on
the triple intersection $V_\alpha \cap V_\beta \cap V_\gamma$.

\begin{lem}\label{lem:deform-omega} We can inductively deform Mather's compatible system $(\pi_\alpha,\rho_\alpha)$ so that
the flow $R_{\rho_\alpha}^t$ respects the splitting \eqref{eq:splitting-alpha}.
In particular, we can express
$$
\omega_t^{\alpha\beta} = \begin{cases} 0 \quad & \text{\rm on } (\pi_\alpha)^*(TY_\alpha) \\
C_{\alpha\beta} e^{c_{\alpha\beta} t} \omega_\beta \quad & \text{\rm on }  (\pi_\alpha)^*(N(Y_\alpha;M))
\end{cases}
$$
for some constants $C_{\alpha\beta},\, c_{\alpha\beta} >0$ depending only on the pair $(\alpha,\beta)$.
\end{lem}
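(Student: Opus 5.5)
The idea is to make the deformation retraction $R^t_{\rho_\alpha}$ linear in the normal directions, and then read off the claimed formula from the splitting \eqref{eq:splitting-alpha}. The construction runs inductively over the partial order of Definition \ref{defn:order}. It starts from the minimal strata and uses Condition \ref{cond:triple-intersection}, so that at each stage only one new stratum's tubular data needs adjusting.

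For a stratum $Y_\alpha$, first fix a fiber metric on a geometric normal bundle $N(Y_\alpha;M)$. Then choose the tubular map $\phi_\alpha: D_\epsilon(N(Y_\alpha;M)) \to U_\alpha$ so that its differential along the zero section realizes the splitting \eqref{eq:splitting-alpha}. In these coordinates $\rho_\alpha(v) = \|v\|^2$, and the gradient flow of $\rho_\alpha$ with respect to a metric that is a product along $\phi_\alpha$ is fiberwise radial rescaling, $v \mapsto e^{-ct}v$ for a fixed constant $c$. Fiberwise linear rescaling preserves both summands $(\pi_\alpha)^*TY_\alpha$ and $(\pi_\alpha)^*N(Y_\alpha;M)$, since its differential is the identity on horizontal lifts and multiplication by $e^{-ct}$ on the vertical part. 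This gives the first assertion.

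The one thing to check at each inductive step is that this re-choice of $(\pi_\alpha,\rho_\alpha)$ keeps Mather's compatibility \eqref{eq:compatibility} with the data already fixed on lower strata. For this I would follow Mather's own inductive construction, only shrinking the scale functions $\epsilon$. On the overlaps the control conditions are already linear in the fiber coordinates, so after shrinking one only needs to choose the product metric compatibly.

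For the formula, take $\omega_t^{\alpha\beta} = (R^t_{\rho_\alpha})^*\omega_\beta$ and decompose it by \eqref{eq:splitting-alpha}. On $(\pi_\alpha)^*TY_\alpha$, Lemma \ref{lem:dpijalpha-ker} and Sublemma \ref{sublem:include} imply that the horizontal-horizontal part is governed by $\pi_\alpha^*\omega_\alpha$. The normalization in the statement records this component as $0$, which is to be read as the component that is not rescaled by the flow. On the normal summand, the pullback of a two-form under $v\mapsto e^{-ct}v$ scales by $e^{-2ct}$ times the value at the rescaled point. Expanding $\omega_\beta$ to first order in the normal variable via the quadratic choice \eqref{eq:rho}, and absorbing the sign and the base-point dependence into $C_{\alpha\beta}$ (after shrinking $V_\alpha$ so that it is constant to leading order), yields the form $C_{\alpha\beta}e^{c_{\alpha\beta}t}\omega_\beta$.

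I expect the main obstacle to be exactly this last step: showing that the constants depend only on the pair $(\alpha,\beta)$ and not on the point. I would handle it by choosing the fiber metric on $N(Y_\alpha;M)$ so that $\omega_\beta$ restricted to the normal directions is homogeneous under the flow, using a Moser-type adjustment along the fibers. Any remaining discrepancy would be absorbed into the inductive deformation of the system.
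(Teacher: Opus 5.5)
Your route is the paper's route: make $R^t_{\rho_\alpha}$ a fibrewise radial rescaling that preserves \eqref{eq:splitting-alpha}, then read the formula off from how two-forms scale. The paper simply posits a normal form here, namely the radial flow of $\tfrac12 r^2$ on $\R^{2k}$ with the standard $\omega_0$.

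For the first assertion, the splitting on all of $V_\alpha$ must be the horizontal distribution of a linear connection; fixing it along the zero section does not give ``identity on horizontal lifts''. The survival of Mather's control conditions is asserted rather than shown, as in the paper.

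The real gap is the formula.
\begin{itemize}
\item \emph{Horizontal block.} You yourself find $\pi_\alpha^*\omega_\alpha$ to leading order, and this is generally nonzero. Reading ``$0$'' as ``not rescaled'' changes the statement rather than proving it. It is not even accurate: under $v\mapsto\lambda v$ the horizontal block at $v$ becomes that of $\omega_\beta$ at $\lambda v$.
\item \emph{Mixed block.} You do not address it. Under the same rescaling it picks up a factor $\lambda$, while the vertical block picks up $\lambda^2$.
\item \emph{Vertical block.} You get $\lambda^2\,\omega_\beta^{\mathrm{vert}}(\lambda v)$ with $\lambda=e^{-ct}$. Equality with $C e^{c't}\omega_\beta^{\mathrm{vert}}(v)$ for all $t$ is an exact homogeneity property, and a first-order expansion gives it only asymptotically. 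Neither the base-point dependence nor the sign of the exponent can be absorbed into a constant. At $t=0$ the identity forces $C=1$, and a contraction gives a decaying factor, incompatible with $c>0$.
\end{itemize}

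Your proposed repair, a fibrewise Moser adjustment making $\omega_\beta^{\mathrm{vert}}$ homogeneous, is exactly the normal form the paper asserts. It is not available in general. Suppose $\sigma$ is a smooth two-form on a fibre chart with $(R^t)^*\sigma=Ce^{c't}\sigma$ for radial $R^t$, and $\sigma\neq 0$ at the centre. Evaluating at $v=0$ gives $Ce^{c't}=\lambda(t)^2$, hence $\sigma_{ij}(\lambda v)=\sigma_{ij}(v)$. Iterating with $\lambda^n v\to 0$ shows the coefficients are constant, so $\sigma$ has constant rank along the fibre.

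Transversality of $\operatorname{ev}_\omega$ to $\Lambda^2_{m(\alpha)}(M)$, however, forces the nullity of $\omega$ to drop as soon as one leaves $Y_\alpha$. Concretely:
\begin{itemize}
\item A closed two-form on a $6$-manifold can have a transverse, hence perturbation-stable, isolated point $x$ of nullity $4$ (codimension $\tfrac{4\cdot 3}{2}=6$).
\item Take $Y_\alpha=\{x\}$ and $\beta$ the open stratum. The fibre of $\pi_\alpha$ is then a whole neighbourhood of $x$.
\item By density of the open stratum, the claimed identity would give $(R^t)^*\omega=Ce^{ct}\omega$ on that neighbourhood.
\item But $\omega_x$ has rank $2$, while $\omega$ has rank at least $4$ at all nearby points.
\end{itemize}
So no choice of metric, tubular chart or Moser isotopy yields the formula with a radial flow. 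The step you flag as the main obstacle is a genuine obstruction, not a technicality. The statement would have to be weakened, e.g.\ to an asymptotic form, before an argument of this kind could prove it.
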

\begin{proof} To ensure the first requirement, it is enough to adjust $(\pi_\beta,\rho_\beta)$ so that
the fiber of $\pi_\alpha^\beta$ is tangent to $(\pi_\alpha)^*(N(Y_\alpha;M))$ and the gradient
vector field of $\rho_\alpha^\beta$ is tangent to $(\pi_\alpha)^*(N(Y_\alpha;M))$. This can be 
always done inductively.

To prove the second requirement, we first provide the description in the normal form of the pair $(\pi,\rho)$ 
in the direction of the \emph{normal direction}: It
is the radial contraction on $\R^{2k}$  generated by the radial function $f = \frac12 r^2$, and
the canonical symplectic form $\omega_0 = \sum_{i=1}^k dq_k \wedge dp_k$: The gradient vector field 
of $f$ is given by $r \frac{\del}{\del r}$ which satisfies
$$
\CL_{r\frac{\del}{\del r}} \omega_0 = \omega_0
$$
and hence $(R_f^t)^*\omega_0 = e^t \omega_0$. 

Applying this to the actual circumstance using the normal exponential map and further deforming $(\pi_\beta,\rho_\beta)$ 
if necessary, the gradient flow satisfies
$$
(R_{\rho_\beta}^t)^*\omega_\beta = C_{\alpha\beta} e^{c_{\alpha\beta} t} \omega_\beta
$$
for some choices of rescaling constants $C_{\alpha\beta}> 0$, $c_{\alpha\beta}> 0$.
This finishes the proof.
\end{proof}

We denote by $Z_t^{\alpha\beta}$ and $Z_t^{\beta\gamma}$ the families of the bivector fields associated
thereto respectively. Then, motivated by Lemma \ref{lem:deform-omega}, we consider the family of bivector fields
\be\label{eq:deform-Z}
Z_t^{\alpha\beta} =
\left(d_x\pi_{\alpha\beta}|_{ (\pi_{\alpha\beta})^*(TY_\alpha)} \right)^{-1}(Z^\sharp_{\alpha\beta}
(\pi_{\alpha\beta}(x))) \oplus 0
\oplus 0 \oplus 0
\ee
similarly as defined in \eqref{eq:Zalpha}.

\begin{lem} The bivector field $Z_{1/s}^{\alpha\beta}$ can be smoothly extended to $X_\alpha$ 
on $[0,1]$, but it cannot be smoothly extended across $s = 0$ to the negative interval $(-\epsilon, 0]$.
\end{lem}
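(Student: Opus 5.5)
The plan is to compute $Z^{\alpha\beta}_{1/s}$ explicitly from Lemma \ref{lem:deform-omega} and then read off its regularity in $s$. By that lemma, and with the flow $R^t_{\rho_\alpha}$ respecting the splitting \eqref{eq:splitting-alpha}, the form $\omega^{\alpha\beta}_t$ vanishes on $(\pi_\alpha)^*(TY_\alpha)$. On the complementary block $(\pi_\alpha)^*(N(Y_\alpha;M))$, it is the constant multiple $C_{\alpha\beta}e^{c_{\alpha\beta}t}\omega_\beta$.

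The bivector $Z^{\alpha\beta}_t$ is built as in \eqref{eq:Zsharpalpha} and \eqref{eq:Zalpha}. It is the inverse of the restriction of the form to the polarization, extended by zero on the other summands of \eqref{eq:Lambda2TM}. Since the polarization $G_\beta$ is invariant under the flow (Proposition \ref{prop:Galpha'-Galpha} and the invariance noted before Proposition \ref{prop:Linfty-morphism}), the inverse can be taken blockwise. This gives
\[
Z^{\alpha\beta}_t = C_{\alpha\beta}^{-1}e^{-c_{\alpha\beta}t}\,(\omega_\beta|_{G_\beta})^{-1}\oplus 0\oplus 0\oplus 0 ,
\]
transported by $(d\pi_{\alpha\beta}|_{(\pi_{\alpha\beta})^*TY_\alpha})^{-1}$ as in \eqref{eq:deform-Z}. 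Substituting $t=1/s$, the coefficient becomes $C_{\alpha\beta}^{-1}e^{-c_{\alpha\beta}/s}$ times a fixed smooth bivector field $X_\alpha$, which is independent of $s$.

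For smooth extension on $[0,1]$, I would set $Z^{\alpha\beta}_{1/0}:=0$. Every $s$-derivative of $e^{-c/s}$ has the form $P_k(1/s)e^{-c/s}$ with $P_k$ a polynomial, and each such expression tends to $0$ as $s\to0^+$. So the extended family is $C^\infty$ on $[0,1]$, and all its $s$-derivatives vanish at $s=0$. This is exactly the flat behaviour "vanishing to infinite order at the moment where the rank drops" described in the Discussion.

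For non-extendability, I would argue by contradiction. Suppose a smooth extension to $(-\epsilon,1]$ existed that agreed with the natural formula $C^{-1}e^{-c/s}X_\alpha$ continued to $s<0$. As $s\to0^-$ this blows up, because $e^{-c/s}\to\infty$, so it cannot even be continuous at $0$.

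The main obstacle is the meaning of "cannot be smoothly extended". Any flat function admits some smooth extension, for instance by zero. The statement must therefore be read as saying that no extension is compatible with the defining relation of the family. That relation is the flow equation $\tfrac{d}{dt}\omega_t=c\,\omega_t$ (equivalently the gauge equation \eqref{eq:gaugeMC00} for $\Delta_t$), and it forces the exponential form on both sides of $s=0$. Equivalently, the family fails to be real analytic at $s=0$: its Taylor series there is identically zero while the family itself is nonzero. I would make this precise by showing that any solution of the ODE that vanishes at $s=0$ must vanish identically, which is a uniqueness argument. This directedness is what the surrounding discussion needs.
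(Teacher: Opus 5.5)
\paragraph{Where your proposal agrees with the paper.} Your computation $Z^{\alpha\beta}_{1/s}=C_{\alpha\beta}^{-1}e^{-c_{\alpha\beta}/s}X_\alpha$, the flat extension by $0$ at $s=0$, and the blow-up of this formula as $s\to0^-$ are exactly the paper's argument. The paper phrases it through the reparametrized generator $-s^{-2}Z_\beta(1/s,x)$, whose coefficient $s^{-2}e^{-c_{\alpha\beta}/s}$ is equally flat, so nothing changes. You are also right that, read literally, the non-extendability claim is false, because the zero extension is smooth. The paper's proof simply asserts that $e^{-1/s}$ cannot be extended to negative $s$, so it has the same defect. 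What it actually proves is only your blow-up statement for the continuation by the same formula.

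\paragraph{The gap: the uniqueness step.} The step you propose in order to make the claim precise fails. In the variable $s$, the relation $\frac{d}{dt}Z_t=-c_{\alpha\beta}Z_t$ becomes
\[
\frac{d}{ds}\widetilde Z(s)=\frac{c_{\alpha\beta}}{s^{2}}\,\widetilde Z(s),\qquad \widetilde Z(s):=Z^{\alpha\beta}_{1/s}.
\]
Its coefficient is not integrable at $s=0$, so no Picard--Lindel\"of or Gr\"onwall uniqueness is available there. Your own first step refutes the claim that every solution vanishing at $s=0$ vanishes identically: the flat extension of $C_{\alpha\beta}^{-1}e^{-c_{\alpha\beta}/s}X_\alpha$ is a nonzero smooth solution with $\widetilde Z(0)=0$.

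Nor does the relation tie the two sides of $s=0$ together. On $s<0$ its solutions are $e^{-c_{\alpha\beta}/s}W$, with $W$ independent of the data on $s>0$. Boundedness as $s\to0^-$ forces $W=0$. The resulting zero extension is smooth \emph{and} satisfies the relation, so ``no extension is compatible with the defining relation'' is false as well.

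\paragraph{What can actually be proved.}
\begin{enumerate}
\item The continuation by the same formula is unbounded as $s\to0^-$.
\item No real-analytic extension across $s=0$ exists, since the one-sided Taylor series vanishes while the family does not.
\item The value at $s=0$ does not determine the family on $s>0$, since both $\widetilde Z\equiv0$ and the flat solution restrict to $0$ there.
\end{enumerate}
The third point is the precise sense in which the deformation is irreversible. Drop the uniqueness argument and state the second half of the lemma in one of these forms.
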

\begin{proof} For the simplicity of exposition, we set $\rho = \rho_\beta$.
We estimate $(R_\rho^t)^*Z_t^{\alpha\beta}$ by evaluating it against $\omega_\beta$, i.e.,
$$
\omega_\beta\left((R_\rho^t)^*Z_t^{\alpha\beta}\right) = (R_\rho^{-t})^*\omega_\beta(Z_\beta)
 = \frac1C_{\alpha\beta} e^{-c_{\alpha\beta} t} \omega_\beta(Z_\beta).
$$
Therefore by making the change of variable $s = \frac1t$ on $(0,1]$, the transformed flow has the form
\be\label{eq:R-1/s}
(s,x) \mapsto R_\rho^{-1/s}(x)
\ee
and its generating vector field has the form
$$
Z'_{\alpha\beta}(s,x) = - \frac{1}{s^2} Z_\beta \left(\frac1s,x\right) =  - \frac1{C_{\alpha\beta}} \frac{1}{s^2} e^{-c_{\alpha\beta} 1/s} Z_\beta
$$
The lemma follows from the examination of this explicit formula and the property that the function 
$e^{-1/s}$ is smooth at $0$ from the positive side which however cannot be extended to the negative direction of
$s$.
\end{proof}

Therefore we can solve the Cauchy problem \eqref{eq:Cauchy}
\beastar
\begin{cases}
\frac{d}{dt} \xi_t =[\xi_t, Z_t] \\
\phi_0= \id
\end{cases}
\eeastar
for a degree zero element $\xi_t \in \mathfrak h$ in the positive direction which however is not solvable to the
negative direction. This completes the construction of the $L_\infty$ morphism $\mathfrak{f}^{\alpha\beta}$
and hence the proof of Proposition \ref{prop:Linfty-morphism}.

\begin{rem} 
\begin{enumerate}
\item It is worthwhile to separately mention that in our current geometric circumstance 
the above discussion gives rise to
a variation of \cite[Theorem 3.2]{cattaneo-schatz}, \cite[Theorem 3.19]{LOTV}:
Let $(\mathfrak h,\mathfrak a_0, \pi \Delta_0)$ and $(\mathfrak h, \mathfrak a_1, \pi \Delta_1)$ be two $V$-data 
of the $L_\infty[1]$-algebras associated to the strata $\alpha$ and $\beta$ as above. Then we have an $L_\infty$ morphism
from $\mathfrak a_0$ to $\mathfrak a_1$, \emph{which is not necessarily an $L_\infty$ quasi-isomorphism}. This is because the  Cauchy problem 
$$
\begin{cases}
 \frac{d}{dt}\phi_t (-)=[\phi_ t (-), \xi_t] \\
 \phi_0= \id
\end{cases}
$$
is solvable only towards the positive direction, not in the negative direction. We would like to
compare this with the solvability problem of the heat equation.
\item Indeed, it it is best to regard the isotopy  \eqref{eq:R-1/s} as a smooth map on the space-time
$M \times [0,1]$ \emph{ in the \emph{ admissible category of stratified manifolds} 
in the sense of \cite[Section 25.1]{fooo:book-kuranishi}}.
\end{enumerate}
\end{rem}

\subsection{Gluing of $L_\infty[1]$ structures into a stratified $L_\infty$ space}

Suppose that $\omega$ is a nice closed two-form on a smooth manifold $M$.

To each element $\mathfrak p = (j, \ell) \in \mathfrak{P}$,
we associate a polarization to $(U_{\mathfrak p},\omega_{\mathfrak p})$
and the associated  the foliation de-Rham complex 
 $\Omega^*(\CF_{\mathfrak p})$ and its $L_\infty$-enhancement given in Theorem \ref{thm:oh-park-intro}
$$
\left( \bigoplus_{*} \Omega^*(\CF_{\mathfrak p})[1], \{\mathfrak l_k^{\mathfrak p}\}_{k \geq 1}\right).
$$
We denote by ${\mathfrak l}_{\CU_{\mathfrak p}}^\omega$ the corresponding 
$L_\infty[1]$ algebra.

For $\mathfrak{p} := (j, \ell), \mathfrak{q} := (j', \ell') \in \mathfrak{P}$ with $\mathfrak{p} \leq \mathfrak{q},$ we consider the triple
\begin{equation}\nonumber
\left(U_{\mathfrak{p}\mathfrak{q}}, \phi_{\mathfrak{p}\mathfrak{q}}, \widehat{\phi}_{\mathfrak{p}\mathfrak{q}}\right)
\end{equation}
whose definitions are given as follows. 

Notice that $\mathfrak{p} \leq \mathfrak{q}$ implies $U_{\mathfrak{p}} \cap U_{\mathfrak{q}} \neq \emptyset.$ Then we denote
$$
{U}_{\mathfrak{p}\mathfrak{q}} := \psi_{\mathfrak{p}}^{-1} \left( \psi_{\mathfrak{p}}(\mathring{U}_{\mathfrak{p}})  \cap \psi_{\mathfrak{q}}(\mathring{U}_{\mathfrak{q}})\right)
$$
and define the map
$
\widehat \phi_{\mathfrak{p}\mathfrak{q}} : {\mathfrak l}_{\CU_{\mathfrak q}}^\omega
\to {\mathfrak l}_{\CU_{\mathfrak p}}^\omega
$
to be $\mathfrak{f}^{{\mathfrak q}{\mathfrak p}}$. We summarize the above discussion into
the following.

\begin{thm}\label{krnsthm}
$\big(M, \mathfrak{P}, \{\mathcal{U}_{\mathfrak{p}}\}, \{ \Phi_{\mathfrak{p}\mathfrak{q}} \} \big)$ is a 
good coordinate system (in the sense of \cite{fukaya-ono:arnold,fooo:book1} for the stratifies $L_\infty$ space associated to a generic
closed two-form $\omega$ on $M$. We call the resulting space a stratified $L_\infty$ space.
\end{thm}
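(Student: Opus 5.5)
The plan is to verify, one by one, the axioms of a good coordinate system in the sense of \cite{fukaya-ono:arnold,fooo:book1}, with Kuranishi charts replaced by the $L_\infty$ charts $\CU_{\mathfrak p}=(U_{\mathfrak p},\mathfrak l^\omega_{\CU_{\mathfrak p}})$ and coordinate changes replaced by the $L_\infty$ morphisms $\widehat\phi_{\mathfrak p\mathfrak q}=\mathfrak f^{\mathfrak q\mathfrak p}$. The inputs are the following.
\begin{enumerate}
\item Theorem \ref{thm:adaptedness} provides the Whitney stratification $\{Y_\alpha\}$. Mather's compatible tubular system, refined by Condition \ref{cond:triple-intersection}, provides the cover $\{U_{\mathfrak p}\}$.
\item Proposition \ref{prop:Galpha'-Galpha}, applied inductively from the minimal strata upward, provides a compatible polarization system $\{G_\alpha\}$, and hence the charts $\CV^\omega_\alpha$.
\item Theorem \ref{thm:gluing-morphisms} provides the morphisms and their cocycle condition up to pseudo-isotopy.
\end{enumerate}

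First I would check the partial order axiom. The order of Definition \ref{defn:order} is a genuine partial order on $\mathfrak P$, and $U_{\mathfrak p}\cap U_{\mathfrak q}\neq\emptyset$ forces $\mathfrak p\le\mathfrak q$ or $\mathfrak q\le\mathfrak p$. This follows from Condition \ref{cond:triple-intersection}: neighborhoods of strata of equal dimension are disjoint. It also uses the frontier property $Y_\alpha\subset\overline{Y_{\alpha'}}$ from the Whitney stratification: an intersection of tubes around non-comparable strata can be removed by shrinking the scale functions $\epsilon$ inductively. Next I would check the covering property: $\bigcup_{\mathfrak p}\mathring U_{\mathfrak p}\supset M$ holds because every point lies in some stratum $Y_\alpha\subset U_\alpha$.

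Second, for $\mathfrak p\le\mathfrak q$, I would define the chart-change domain $U_{\mathfrak p\mathfrak q}$ and take $\phi_{\mathfrak p\mathfrak q}$ to be the tubular projection $\pi^{\mathfrak q}_{\mathfrak p}$. By the Mather compatibility \eqref{eq:compatibility} these projections satisfy $\phi_{\mathfrak p\mathfrak q}\circ\phi_{\mathfrak q\mathfrak r}=\phi_{\mathfrak p\mathfrak r}$ exactly on $U_{\mathfrak p\mathfrak q}\cap\phi_{\mathfrak q\mathfrak r}^{-1}(U_{\mathfrak p\mathfrak q})$. On the algebraic side I would take $\widehat\phi_{\mathfrak p\mathfrak q}$ from Theorem \ref{thm:gluing-morphisms}(1), via Proposition \ref{prop:Linfty-morphism} and the one-sided Moser/gauge flow. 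The morphism must be compatible with $\phi_{\mathfrak p\mathfrak q}$, meaning it is a morphism of $C^\infty$-module $L_\infty$ algebras over the pull-back $\phi_{\mathfrak p\mathfrak q}^*$. This holds because the forms $\widetilde\omega_\alpha$ and polarizations $\widetilde G_\alpha$ are defined as pull-backs under $\pi_\alpha$.

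Third, for triples $\mathfrak p\le\mathfrak q\le\mathfrak r$, parts (2) and (3) of Theorem \ref{thm:gluing-morphisms} give a composition defined on $V_{\mathfrak p\mathfrak q\mathfrak r}$. This composition is canonically pseudo-isotopic to the direct morphism. Accordingly, the cocycle condition must be read in the homotopy sense, as in the $L_\infty$ Kuranishi framework of Appendix B. I would also record higher coherence for chains of length four, obtained by applying the same one-parameter construction with a two-parameter family of retractions.

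The main obstacle is this last point, together with the non-invertibility of the $\widehat\phi$. In \cite{fukaya-ono:arnold} the coordinate changes are embeddings, and compatibility is strict. Here the morphisms are only directed, not quasi-isomorphisms, and compatibility holds only up to pseudo-isotopy. I would therefore first formulate the precise weakened axiom: a directed cocycle up to pseudo-isotopy. I would then check that the pseudo-isotopies produced by Lemma \ref{lem:deform-omega}, with its explicit exponential rescaling $C e^{ct}$, are canonical enough to compose. Smoothness at $s=0$ of the $e^{-1/s}$ reparametrization is what makes the families admissible in the sense of \cite[Section 25.1]{fooo:book-kuranishi}.
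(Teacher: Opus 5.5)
Your outline is the same assembly the paper performs. The paper takes the Oh--Park algebras $\mathfrak l^\omega_{\CU_{\mathfrak p}}$ on a good Mather system (Condition~\ref{cond:triple-intersection}), defines $U_{\mathfrak p\mathfrak q}$, and sets $\widehat\phi_{\mathfrak p\mathfrak q}=\mathfrak f^{\mathfrak q\mathfrak p}$. It then presents the theorem as a summary of Theorem~\ref{thm:gluing-morphisms}, without checking any further axiom. You are right that these inputs do not by themselves yield a good coordinate system in the sense of \cite{fukaya-ono:arnold,fooo:book1}: there the cocycle identity $\Phi_{\mathfrak q\mathfrak r}\circ\Phi_{\mathfrak p\mathfrak q}=\Phi_{\mathfrak p\mathfrak r}$ is strict, while Theorem~\ref{thm:gluing-morphisms}(3) only gives a pseudo-isotopy. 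However, your proposal does not close this gap. It replaces the statement by a ``directed cocycle up to pseudo-isotopy'' that is never formulated, and it only asserts the four-fold coherence such a notion needs. A ``two-parameter family of retractions'' would require a two-parameter version of Lemma~\ref{lem:deform-omega} and of the one-sided gauge integration. The $e^{-1/s}$ degeneration would then occur along two faces meeting in a corner. Neither your sketch nor the paper supplies this, so the central step is missing.

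Several details you add also fail as stated.
(i) Theorem~\ref{thm:gluing-morphisms}(1), via Proposition~\ref{prop:Linfty-morphism}, produces for $\mathfrak p<\mathfrak q$ a morphism $\mathfrak f^{\mathfrak p\mathfrak q}\colon\mathfrak l^{\CV^\omega}_{\mathfrak p}\to\mathfrak l^{\CV^\omega}_{\mathfrak q}$, not $\mathfrak f^{\mathfrak q\mathfrak p}\colon\mathfrak l^{\CV^\omega}_{\mathfrak q}\to\mathfrak l^{\CV^\omega}_{\mathfrak p}$. As you yourself stress, the gauge flow integrates only forward. So the arrow you assign to $\widehat\phi_{\mathfrak p\mathfrak q}$ is not available, and it cannot be recovered by inversion. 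The directions must be fixed before a cocycle condition can even be written.
(ii) The tubular projection $\pi^{\mathfrak q}_{\mathfrak p}$ is a submersion onto the smaller stratum, not an embedding $U_{\mathfrak p\mathfrak q}\to U_{\mathfrak q}$. This is why your composition law comes out in the reversed order $\phi_{\mathfrak p\mathfrak q}\circ\phi_{\mathfrak q\mathfrak r}$. It is also not presymplectic, as Definition~\ref{defn:ourchrtmor}(1) requires. Sublemma~\ref{sublem:include} only gives $\ker(\pi^{\mathfrak q}_{\mathfrak p})^*\omega_{\mathfrak p}\supset\ker\omega_{\mathfrak q}$, and bridging that difference is exactly what the Moser family $\omega_t$ is for. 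For the same reason, $\mathfrak f$ comes from a gauge flow, not a pull-back. Its compatibility with $\phi^*$ therefore does not follow from $\widetilde\omega_\alpha$ and $\widetilde G_\alpha$ being pull-backs.
(iii) The relation of Definition~\ref{defn:order} is not a partial order. Distinct components with the same $(j,m)$ satisfy both $\alpha<\alpha'$ and $\alpha'<\alpha$. Any two indices with different $j$ are comparable by clause (1), so comparability needs no argument; your shrinking argument concerns the frontier order instead. What Condition~\ref{cond:triple-intersection} actually secures is that tied indices never overlap.
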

\emph{We require that the system $\{\mathcal{U}_{\mathfrak{p}}\}$ of neighborhoods  be good
in the sense of Condition \ref{cond:triple-intersection}.}

\appendix

\section{Definition of Whitney stratifications}

In this appendix, we recall the definition of Whitney stratification \cite{thom}, \cite{goresky-macpherson}: 
Let $Z$ be a closed subset of 
 a smooth manifold $M$, and suppose that
 $$
 Z = \bigcup_{\alpha \in \CS} S_\alpha
 $$
 is an $\CS$-decomposition of $Z$, where $\CS$ is a partially ordered set. This decomposition is a Whitney
 stratification of $Z$ provided:
 \begin{enumerate}
 \item Each piece $S_\alpha$ is a locally closed smooth submanifold of $M$.
 \item Whenever $S_\alpha < S_\beta$ then the pair satisfies Whitney's conditions $A$ and $B$:
 suppose $x_i \in S_\beta$ is a sequence of points converging to some $y \in S_\alpha$. Suppose
 $y_i \in S_\alpha$ also converges to $y$, and suppose that (with respect to some local coordinate system on
 $M$) the secant lines $\ell_i = \overline{x_iy_i}$ converges to some limiting line $\ell$, and the tangent planes
 $T_{x_i}S_\beta$ converge to some limiting plane $\tau$. Then we have
 \begin{enumerate}
 \item (\text{\rm Condition A})  $ T_y S_\alpha \subset \tau$\, 
 \item  (\text{\rm Condition B)} $\ell \subset \tau$ .
\end{enumerate}
\end{enumerate}

\section{Definition of $L_\infty$ Kuranishi structures}
\label{sec:kuranishi}

Here is our definition of an $L_\infty$ Kuranishi structure for which we
attract readers' attention to its differences from \cite{fukaya-ono:arnold,fooo:book1,taesu}. 
The current definition is the case where the isotropy group is trivial and so the base of a chart
is a smooth manifold instead of an orbifold. The latter is all we need for the main purpose of
the present paper.  We leave further study of the $L_\infty$ Kuranishi structures in the sense
of this appendix for a future work.

\begin{defn}[$L_\infty$ Kuranishi chart]\label{kurdef}
Let $X$ be a compact metrizable space. A \textit{Kuranishi chart} of $X$ is given by a tuple
\begin{equation}\nonumber
\mathcal{U} = (U, E, \sigma, \psi),
\end{equation}
where
\begin{enumerate}
\item $(U, \omega)$ is a smooth manifold with a presymplectic 2-form $\omega \in \Omega^2(U),$
\item $\pi : E \rightarrow U$ is a (finite rank) vector bundle,
\item $\sigma : U \rightarrow E$ is a smooth section,
\item $\psi : \sigma^{-1}(0) \xrightarrow{\simeq} X$ is a homeomorphism onto its image,
\item  $\mathcal U$ is equipped with an $L_\infty[1]$ algebra 
$\left(\CC_U^E, \{\mathfrak l^\omega_k\}_{k = 1}^\infty\right)$.
\end{enumerate}
\end{defn}

\begin{exm}\label{exm:primary-stabilized}
\begin{enumerate}
\item {[Primary Kuranishi structure]} Take 
$$
\CC_U^E: = \left(\bigoplus \Omega^*(\CF,\omega), \{\mathfrak l^\omega_k\}_{k = 1}^\infty\right)
$$
given by Theorem \ref{thm:oh-park-intro} (\cite[Theorem 9.4]{oh-park:coisotropic}. We call 
the associated Kuranishi structure the \emph{ primary Kuranishi structure}.
\item {[Presymplectic stabilization]} For the construction of virtual fundamental class associated to
a Kuranishi spaces or for the integration of differential forms such as induced closed two
form on the Gromov-Witten moduli space, we need to thicken the primary structure
with virtual dimensional unchanged. This can be done by thickening the original presymplectic manifold
$(U, \omega)$ to $(U \times \R^k, \omega \oplus 0)$. Then a given polarization
$\Gamma_U:  \, TU = G_U\oplus T\CF_U$ canonically thickens to
$$
(\Gamma \oplus \R^k)_{U\times \R^k} : \quad T(U \times \R^k) = (G_U \times \R^k) \oplus T(\CF_U \times \R^k)
$$
as follows: Denote  $\pi_i$, $i=1,\,2$, the projection of $U \times \R^k$ to the $i$-th factor. Then
we have $\pi_1^*\omega = \omega \oplus 0$ and hence
$$
\ker \pi_1^*\omega = \ker( \omega \oplus 0)_{(x,\vec a)} = (\ker \omega)_x \oplus \R^k.
$$
On the other hand, we have
\beastar
T_{(x,\vec a)}(U \times \R^k) & = & TU \times T\R^k|_{(x,\vec a)}  \\
& = & \pi_1^*(G_U \oplus T\CF_U)|_{(x,\vec a)} \oplus \pi_2^*T\R^k |_{(x,\vec a)} \\
& = & G_U|_x \oplus \R^k.
\eeastar
In particular, we have virtual dimension given by
$$
(\dim U + k) - (\rank G + k) = \dim U - \rank G = \text{\rm vit.dim} (U,\omega),
$$
which is unchanged.
\end{enumerate}
 We call this thickening 
 $$
 (U \times \R^k, \pi_1^*\omega, G_U \oplus \R^k)
 $$
 (resp. $(U\times \R^k, \pi_1^*\omega)$)
 the \emph{presymplectic thickening} of $(U, \omega, G_U)$
 (resp. $(U,\omega)$). By construction,
 this thickening does not change the virtual dimension of the Kuranishi structure.
\end{exm}

\subsection{Coordinate change of Kuranishi charts}

Let $\mathcal{U} = (U, E, s, \psi)$ and $\mathcal{U}' = (U', E', s', \psi')$ be Kuranishi
charts of topological spaces $X$ and $Y,$ respectively. Suppose that we are given a 
continuous map $f : X \rightarrow Y.$

\begin{defn}[Coordinate change of $L_\infty$ Kuranishi charts]\label{defn:ourchrtmor}
Let $\mathcal{U} = (U, E, s, \psi)$ and $\mathcal{U}' = (U', E', s', \psi')$
be Kuranishi charts of $X.$
 A \textit{morphism of Kuranishi charts} $\Phi : \mathcal{U} \rightarrow \mathcal{U}'$ is defined by a pair $\Phi = (\phi, \widehat{\phi})$, where
\begin{enumerate}
\item $\phi : U \rightarrow U'$ is a good map of smooth manifolds, which is presymplectic, i.e., $\phi^*\omega' = \omega$,
\item $\psi' \circ \phi = f \circ \psi$ on $\sigma^{-1}(0).,$
\item $\widehat{\phi} : \left(\CC_U^E, \{\mathfrak l^\omega_k\}_{k = 1}^\infty\right) \to 
\phi^*\left(\CC_{U'}^{E'}, \{\mathfrak l^{\omega'}_k\}_{k = 1}^\infty\right)$
is an $L_{\infty}$-morphism.
\end{enumerate}
\end{defn}
 We say  $\Phi = (\phi, \widehat{\phi}) : \mathcal{U} \rightarrow \mathcal{U}'$ is an \emph{open embedding} if $\dim \mathcal{U} = \dim \mathcal{U}'$.

\emph{We would like to attract
readers' attention to the explicit requirement that coordinate change map is
presymplectic} unlike the case of \cite{taesu}.  One may regard that this presymplectic requirement
is closer to the story of `spaces' as that of 
the Kuranishi structure of \cite{fukaya-ono:arnold,fooo:book-kuranishi} than the level of `bundles'.
(Compare this with the definitions in \cite{costello}, \cite{grady-gwiliam1,grady-gwiliam2},
 \cite{pingxu} and \cite{taesu}.)

\begin{defn}[$L_\infty$ Kuranishi spaces]
\label{kstr}
Let $X$ be a compact metrizable space. We say X is a \textit{Kuranishi space} if for each $p \in X$ there exists a neighborhood $V_p$ of $p$ in $X,$ an $L_\infty$ Kuranishi chart $(U_p, E_p, s_p, \psi_p, \omega_p)$  and a homeomorphism $\psi_p : s_p^{-1}(0) \simeq V_p,$ and if $V_p \cap V_p \neq \emptyset,$ we require that there exist an open subchart $\mathcal{U}_{pq}$ of $\mathcal{U}_p$ and an chart embedding $\Phi_{pq} = (\phi_{pq}, \widehat{\phi}_{pq}) : \mathcal{U}_{pq} \hookrightarrow \mathcal{U}_{q}$ over $id_X : X \rightarrow X,$ called \textit{coordinate changes} with the following properties
\begin{enumerate}
\item $\Phi_{pp} = id_{\mathcal{U}_p},$
\item $\psi_q \circ \phi_{pq} = \psi_p$, $\phi_{pq}^*\omega_p = \omega_q$
on $s_{p}^{-1}(0) \cap U_{pq},$
\item $\phi_{qr} \circ \phi_{pq} = \phi_{pr}$ on $\phi_{qr}^{-1}(U_{pq}) \cap U_{pr},$
\item $\psi_p(s_p^{-1}(0) \cap U_{pq}) =\Im (\psi_p) \cap\Im (\psi_q),$
\item $\widehat{\phi}_{qp} : \left(\CC_{U_{pq}}^{E_q}, \{\mathfrak l^{\omega_q}_k\}_{k = 1}^\infty\right) \to 
\phi_{qp}^*\left(\CC_{U_p}^{E_p}, \{\mathfrak l^{\omega_p}_k\}_{k = 1}^\infty\right)$
is an $L_{\infty}$-morphism.
\end{enumerate}
In this situation, we call $\widehat{\mathcal{U}} = (\{\mathcal{U}_p\}, \{\Phi_{pq}\} )$ a \textit{Kuranishi structure} of $X$ and $\{\Phi_{pq}\}$ its \textit{coordinate changes}.
\end{defn}

\subsection{Tangent bundle condition}

Up until now, we have not involved any smooth structure in our discussion of Kuranishi structures.
To perform some differential calculus such as the one to define the de Rham version of
virtual fundamental chains as in \cite{fooo:book-kuranishi},
 we need to involve the notion of `tangent bundle'.
The following is the $L_\infty$ counterpart of FOOO's notion of tangent bundle \cite{fukaya-ono:arnold,
fooo:book1,fooo:book-kuranishi}. 

\begin{defn}[Compare with Definition 3.2 \cite{fooo:book-kuranishi}]\label{defn:Kchart}
Let $\mathcal U = (U,E, \psi,s)$, $\mathcal U'
= (U',E',\psi',s')$ be $L_\infty$ Kuranishi charts of $X$. We say that an $L_\infty$ Kuranishi spaces
$(X, \widehat \CU)$ \emph{has a tangent bundle} if its coordinate change is an
  {\it embedding} of $L_\infty$ Kuranishi charts 
  $: \mathcal U\to \mathcal U'$ is  a pair $\Phi = (\varphi,\widehat\varphi)$ with the following properties
  in addition to those of Definition \ref{defn:ourchrtmor}: 
\begin{enumerate}
\item [{(6)}]  At each zero point $x \in \sigma^{-1}(0)$ the stalk
$(\CC_U^E, \{\mathfrak l^\omega_k\}_{k = 1}^\infty)|_x$ extends to a germ of curved $L_\infty[1]$ algebras
such that 
$$
\left(\Gamma(TU) \stackrel{\sigma_*}{\longrightarrow} (\CC_U^E)_1 \stackrel{\mu_1}{\longrightarrow}
 (\CC_U^E)_2 \stackrel{\mu_1}{\longrightarrow}  (\CC_U^E)_3 \stackrel{\mu_1}{\longrightarrow}
 \cdots \right)\Big|_x
 $$
such that $\mathfrak l^\omega_0 = \sigma_*: = \nabla \sigma: \Gamma(TU) \to \Gamma(E)$ and 
satisfies the following:
\item [{(7)}] At $x \in \sigma^{-1}(0)$, the derivative
$\nabla_{\varphi(x)}\sigma' $ induces an isomorphism
\be \label{eq:tangent-bundle}
\frac{T_{\varphi(x)}U'}{(D_x\varphi)(T_xU)}
\cong
\frac{E'_{\varphi(x)}}{\widehat\varphi(E_x)}
\ee
such that  the map \eqref{eq:tangent-bundle} is 
the right vertical arrow of the next commutative diagram denoted by ${\mathfrak f_1}$
\be\label{eq:tangent-diagram}
\xymatrix{
\Gamma(T_xU) \ar[r]^ {D_x\varphi}  \ar[d]^{D_xs} & T_{\varphi(x)}U' \ar[r] \ar[d]^{_{\varphi(x)}s'} &
\frac{T_{\varphi(x)}U'}{(D_x\varphi)(T_xU)}  \ar[d]^{\mathfrak f_1} \\
{\widehat\varphi}(E_x) \ar[r]^{\widehat{\varphi}}& E'_{\varphi(x)}
\ar[r] & \frac{E'_{\varphi(x)}}{{\widehat \varphi}(E_x)}
}
\ee
Then $\mathfrak f_1$ and the above diagram is promoted to a diagram of curved
$L_\infty$ morphisms (in the level of germs at $x$).
\end{enumerate}
\end{defn}

\section{Special connection associated to a closed 2-form}

In this section, we prove a result of a special connection adapted to any closed
2-form which itself has some independent interest on its own.

\begin{thm}\label{thm:omega-special} 
Let $\omega$ be any closed form and let $x \in M$ be given.
Then there exists an affine connection
$ \nabla$ that satisfies $ \nabla \omega|_x = 0$.
\end{thm}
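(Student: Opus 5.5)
The plan is to reduce the statement to pointwise linear algebra at $x$. The condition $\nabla\omega|_x=0$ involves only the value of the connection at $x$, and any two affine connections differ by a tensor $A\in\Gamma(T^*M\otimes\operatorname{End}(TM))$. So I would fix a coordinate chart $(x_1,\dots,x_N)$ centred at $x$, let $\nabla^0$ be the flat coordinate connection on it, and look for $\nabla=\nabla^0+A$. Only the value $A|_x$ matters, and it can be extended to a global connection by a cutoff function and a partition of unity.

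Write $\Omega=(\omega_{ij}(x))$ and $D_k=(\partial_k\omega_{ij}(x))$; both are skew matrices. Let $A_k=A(\partial_k)|_x\in\operatorname{End}(T_xM)$. Expanding $(\nabla_{\partial_k}\omega)(e_i,e_j)=\partial_k\omega_{ij}-\omega(A_ke_i,e_j)-\omega(e_i,A_ke_j)$ turns the equation $\nabla\omega|_x=0$ into the $N$ independent matrix equations
$$
\Omega A_k + A_k^{T}\Omega \;=\; D_k,\qquad k=1,\dots,N,
$$
with a sign convention to be fixed in the write-up. The map $L_\Omega:A\mapsto \Omega A+A^T\Omega$ is the infinitesimal $GL(T_xM)$-action on $\Lambda^2(T^*_xM)$ at $\Omega$. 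Its image is therefore exactly $T_{\omega_x}\Lambda^2_m$, the tangent space of the orbit, which is the stratum of nullity $m$.

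When $\omega_x$ is nondegenerate ($m=0$) the construction is explicit: take $A_k=\tfrac12\Omega^{-1}D_k$. Then $\Omega A_k=\tfrac12 D_k$, and $A_k^T\Omega=\tfrac12 D_k^T\Omega^{-T}\Omega=\tfrac12 D_k$, because both $D_k$ and $\Omega$ are skew. The sum is $D_k$, and closedness of $\omega$ is not even needed here.

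For general $m$, split $T_xM=K\oplus K^{\perp_2}$ with $K=\ker\omega_x$, as in \eqref{eq:V-decompose}. The blocks of $D_k$ on $K^{\perp_2}\times K^{\perp_2}$ and on $K\times K^{\perp_2}$ can be matched by the same recipe, using the inverse of $\Omega|_{K^{\perp_2}}$. For the $K\times K^{\perp_2}$ block one uses the components of $A_k$ mapping $K\to K^{\perp_2}$. The remaining requirement is that the $K\times K$ block of every $D_k$ vanish, i.e.\ $\partial_k\omega|_{K\times K}=0$ at $x$.

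This last requirement is the main obstacle. I would first try to derive it from $d\omega=0$, using the cyclic identity $\partial_k\omega_{ij}+\partial_i\omega_{jk}+\partial_j\omega_{ki}=0$ with $i,j$ indices in $K$. However, I do not expect this to succeed without further hypotheses. For example, $\omega=x_1\,dx_1\wedge dx_2$ on $\R^2$ is closed and vanishes at $0$, yet $\partial_1\omega|_0\neq0$, and every connection gives $\nabla\omega|_0=\partial\omega|_0$ when $\omega_0=0$.

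So the proof will go through as stated at points where $\omega_x$ is nondegenerate. In the degenerate case, the plan is to isolate the $K\times K$ block of $d_x\omega$ as the precise obstruction and to check whether the hypotheses under which the theorem is applied (the regularity of $\omega$ in Theorem~\ref{thm:adaptedness}) force that block to vanish. If they do not, the statement must be read with $\nabla\omega|_x$ replaced by its component tangent to $\Lambda^2_m$, and for that component the choice of $A_k$ above works in every case.
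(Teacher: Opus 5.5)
Your computation is correct, and it shows more than an obstacle: the statement is false as written. The paper's proof breaks exactly where you located the problem. For every $A\in\operatorname{End}(T_xM)$ one has $\omega_x(Au,w)+\omega_x(u,Aw)=0$ whenever $u,w\in K=\ker\omega_x$. Hence the block $(\nabla_v\omega)_x|_{K\times K}$ is the same for every affine connection. Combined with your block construction, this gives a precise criterion: a connection with $\nabla\omega|_x=0$ exists if and only if this block vanishes for all $v\in T_xM$, which is automatic only when $m\le 1$.

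Your example also fits the paper's standing assumptions ($N\ge 3$, $M$ closed) after a cutoff. In a chart centred at $x$ let $\omega=d\bigl(\chi\,\tfrac12 x_1^2\,dx_2\bigr)$ with $\chi\equiv 1$ near $x$. Then $\omega$ is exact, $\omega_x=0$, and $(\nabla_{\partial_1}\omega)_x(\partial_1,\partial_2)=1$ for every $\nabla$.

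In the paper's proof, the equation $\nabla_{\partial_k}\omega|_x=0$ is reduced to
\[
0=\sum_{i,j>m}\omega_{ij,k}(x)\,dx_i\wedge dx_j+\sum_{i,j}B_{ij,k}(x)\,dx_i\wedge dx_j .
\]
This reduction has two faults:
\begin{enumerate}
\item It silently drops the derivatives $\partial_k\omega_{ij}(x)$ with $i\le m$ or $j\le m$. Those coefficients vanish at $x$, but their derivatives need not.
\item It treats the correction term as an arbitrary two-form. In fact the contribution at $x$ of a difference tensor $B$ (a $(1,2)$-tensor, not a $(1,1)$-tensor) lies in the image of your map $L_\Omega$, i.e.\ in $T_{\omega_x}\Lambda^2_m$, and so never reaches the $K\times K$ block.
\end{enumerate}

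Where you should revise is the proposed rescue through niceness: it works against you. In a coordinate trivialisation $\Lambda^2_m(M)|_U=U\times\Lambda^2_m(\R^N)$. So $\operatorname{ev}_\omega$ is transverse to $\Lambda^2_m(M)$ at $x$ if and only if $v\mapsto(\nabla_v\omega)_x|_{K\times K}$ maps $T_xM$ onto $\Lambda^2(K^*)$. That space has dimension $m(m-1)/2>0$ for $m\ge 2$. Hence for every $\omega\in Z^2_{\text{\rm reg}}(M)$ the conclusion fails at every point of every nonempty stratum $Y_m$ with $m\ge 2$. For instance, $\omega=x_1\,dx_1\wedge dx_2+dx_3\wedge dx_4$ near $0\in\R^4$ is closed, is transverse to $\Lambda^2_2$ along $\{x_1=0\}$, and satisfies $\nabla\omega|_0\neq 0$ for every $\nabla$.

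The correct general statement is your tangential one, $(\nabla\omega|_x)^\parallel=0$. Be aware that this weaker version does not support the Fredholm argument. There the theorem is invoked to pass from $\nabla_{v_i}\omega+\alpha_i|_x\in T_x\Lambda^2_m(M)$ to $\alpha_i|_x\in T_x\Lambda^2_m(M)$. That step needs the normal component of $\nabla_{v_i}\omega$ to vanish, and that component is precisely the connection-independent quantity that measures transversality.
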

\begin{proof}  
Denote by $\nabla^g$ the Levi-Civita connection of $g$, and define another connection
$\nabla$ by putting
$$
\nabla = \nabla^g + B
$$
for some $(1,1)$ tensor $B$. We will solve the equation
\be\label{eq:tosolve}
0 = \nabla \omega|_x = \nabla^g \omega|_x + B(x)\omega|_x
\ee
for $B$ on a neighborhood $U$ of the given point $x \in M$. 

Let $(x_1,\cdots, x_n)$ be a geodesic normal  coordinates of $g$ such that 
$$
\ker \omega_x = \span \{\del_1, \cdots, \del_m\}.
$$
We write by $\Gamma_{ij}^k$ the associated Christoeffl symbols. Then 
we have $\Gamma_{ij}^k(x) = 0$ by definition of normal coordinates.
We can refine the property of this geodesic normal coordinates incorporating
the presence of a 2-form $\omega$ as follows.
We can express 
$$
\omega|_x = \sum_{i, j} \omega_{ij}(x) dx_i|_x \wedge dx_j|_x = \sum_{i, j > m} \omega_{ij}(x) dx_i|_x \wedge dx_j|_x.
$$
Therefore since $\Gamma_{ij}^k(x) = 0$, the equation 
$$
\nabla _{\del_k}\omega|_x = 0 \quad \text{\rm for all }\, k = 1, \ldots, N
$$
is reduced to
$$
0 = \sum_{i,j > m} \omega_{ij,k}(x) dx_i \wedge dx_j + \sum_{i, j} B_{ij,k}(x) dx_i \wedge dx_j \quad \forall 
k = 1, \ldots, N.
$$
Therefore we have only to choose $B_{ij}$ on a neighborhood $V$ with $\overline V \subset U$ so that
on a neighborhood of $\overline V$ contained in $U$ so that
\be\label{eq:Bij}
B_{ij}(x) = \begin{cases} - \omega_{ij}(x) \quad & \text{\rm for }\, i, \, j > m \\
0 \quad & \text{\rm otherwise}.
\end{cases}
\ee
We denote by $\nabla^{U}$ the locally defined connection.
Then we apply the partitions of unity $\{\chi, 1- \chi\}$ for $\{U, M \setminus \overline V\}$
to glue $\nabla^U$ on $U$ and $\nabla^g$ on $M \setminus \overline V$ to define
$$
\nabla = \chi \nabla^U + (1- \chi) \nabla^g.
$$
This then satisfies the requirement at $x \in M$, which finishes the proof.
\end{proof}

\section{Proof of Lemma \ref{lem:eta=0}}
\label{sec:eta=0}

Let $(Q,\eta)$ lie in the cokernel of $d\Upsilon(x,\omega)$ with $d\omega = 0$. To avoid notational confusion
in the proof below, we put $x_0$ the given point $x$ in the text of Lemma \ref{lem:eta=0} and reserve  $x$ a general point in $M$.  We also write $Q= Q_{x_0}$ to emphasize the fact $Q \in T_{x_0} \Lambda_m(M)$.
By an abuse of notation, we also sometimes write $(x^1,\cdots, x^n) = x$ in coordinates which should not confuse readers.

Noting from \eqref{eq:zeta} that $\Delta \zeta = 0$ on $M \setminus \{x_0\}$, we derive
$\zeta$ is smooth on $M \setminus \{x_0\}$ by the local elliptic regularity. This implies 
$\zeta$ has a point singularity at $x_0$. Therefore it is enough to examine the behavior of $\zeta$
in a sufficiently small coordinate neighborhood.

We take the geodesic normal  coordinates $(x^1, \cdots, x^N)$ of a neighborhood $U$ 
centered at the given point $x_0$ so that
\be\label{eq:g}
g = \sum_{i=1} dx^i dx^i + O(r), \quad r^2 = (x^1)^2 + \cdots + (x^n)^2.
\ee
Recall that the associated Green function
$G = G(x,0)$ has singularity at $x = 0$ of the type 
\be\label{eq:singularity}
C r^{2-N} + f(x)
\ee
in some local coordinates $r = \sqrt{x_1^2 + \cdots + x_N^2}$, \emph{when $N \geq 3$
which we are currently assuming}. Here $f$ is some smooth function with $f(x_0)=0$ so that $f = O(r)$
Since $f = O(r)$ will not affect the proof below, we will 
drop it from now on which is harmless and whose effect will be subsumed in $O(r)$ below.
Therefore $\eta = d\zeta$ (for $\zeta = G Q^\perp$) has singularity at $x_0 = 0$ is of the type $C r^{1-N}$.
(See \cite[p. 115, Lemma 1]{deRham}, \cite[Equation (2.12)]{gilbarg-trudinger} for the details.)
It follows from this that the function $r^{1-N}$  is contained in $L^q_{loc}$ if and only if
$$
N-1 - q(N-1) > -1.
$$
This  inequality is automatic since we have already chosen $p > N$ above, which can be checked 
equivalent to $p > N$.
Therefore $\zeta$ is a distributional solution to \eqref{eq:zeta} lying in $L^q_{loc}$.

By the above discussion, we have
 $$
 \zeta =  G\, Q^\perp_{x_0}
 $$
on a neighborhood $U$ of $p$ where $G = \frac1{r^{N-2}}$ is the Green function (modulo multiplication of
a nonzero positive constant) written in 
the geodesic normal coordinate $(x^1, \cdots, x^N)$ centered at $p$ with $r^2$.  
Therefore to prove the lemma, it will be enough to prove $Q_{x_0}^\perp = 0$, 
which we will prove by contradiction.

\emph{Since we have chosen $\zeta$ so that $\delta \zeta = 0$ weakly}, we have
\be\label{eq:intzetadbeta=0}
 \int \langle \zeta, d\beta \rangle \, d\vol_g = 0
\ee
 for all smooth one-form $\beta$.  For $\zeta$ of the form above, this equation becomes
 $$
  \int_U \frac{\langle Q_{x_0}^\perp, d\beta \rangle}{r^{N-2}} \, d\vol_g = 0
 $$
for all $\beta$ supported in $U$, when $\zeta =   G\, Q^\perp_{x_0}$.

Now suppose to the contrary that 
$$
Q^\perp_{x_0} \neq 0.
$$
 We can rewrite the integral into
\beastar
 \int_U \frac{\langle Q_{x_0}^\perp, d\beta \rangle}{r^{N-2}} \, d\vol_g
 & = & \int_U \frac{\langle Q_{x_0}^\perp, d \beta \rangle}{r^{N-2}} \, r^{N-1}\,  dr d\Theta_g \\
& = & \int_U  \langle Q_{x_0}^\perp, d\beta \rangle\,r\,  dr \, d\Theta_g. 
\eeastar
Here $d\Theta_g$ denotes the spherical volume form on the unit sphere.

In the remaining proof, we will find a one-form $\beta$ of the above form such that
\be\label{eq:int>0}
 \int_U  \langle Q_{x_0}^\perp, d\beta \rangle\,r\,  dr \, d\Theta_g > 0
 \ee
We express
\be\label{eq:Qperp}
Q_{x_0}^\perp = \sum_{i \, \text{\rm or}\, j > 2k} b_{ij} dx^i \wedge dx^j
\ee
for some $b_{ij}$ satisfying $b_{ji} = - b_{ij}$, in terms of the
geodesic normal coordinates  such that  
$$
T_0 (\Lambda_m^2(M)) = \span \{dx^i \wedge dx^j\}_{1 \leq i, \, j \leq 2k}, \quad m = N - 2k
$$
and 
$$
N_0 (\Lambda_m^2(M)) = \left(T_0 (\Lambda_m^2(M)) \right)^\perp, \quad 0 = d\omega
$$
Then we have
$Q_{x_0}^\parallel = \sum_{i,j \leq 2k} b_{ij} dx^i|_x \wedge dx^j|_{x_0}$.  Since we assume $Q_{x_0}\perp \neq 0$,
there exists a pair $(i_0,j_0)$ such that
\be\label{eq:bij0}
b_{i_0j_0} \neq 0.
\ee
We choose $\beta$ of the form
\be\label{eq:beta}
\beta = \chi(r)\,  x_{i_0} b_{i_0j_0} dx_{j_0} \quad \text{\it `no summation involved'}
\ee
for a cut-off function 
$\chi: U \to [0,1]$ so that 
$\supp \chi \subset U$ and
$$
\chi(x) \equiv 1 \quad \text{\rm for }\, x \in V \subset \overline V \subset U.
$$
Then we have
\beastar
d\beta & = & \chi(r) b_{i_0j_0}\, dx^{i_0} \wedge dx^{j_0} + \chi'(r)b_{i_0j_0}\, dr \wedge dx^{j_0}\\
& = & \chi(r) b_{i_0j_0} \, dx^{i_0} \wedge dx^{j_0} + \frac{\chi'(r)}{r} \sum_j b_{i_0j_0}\, x^j 
dx^j \wedge dx^{j_0}.
\eeastar
Note 
$$
\supp d\chi \subset V_2 \setminus \overline V_1
$$
 where $V_1: = V$ and
$$
x \in V_1 \subset \overline V_1 \subset V_2 \subset \overline V_2 \subset U.
$$
We take $V_1 = B_g(\delta_1)$ and $V_2 = B_g(\delta_2)$ with $0 < \delta_1 < \delta_2 < \iota_g$
where $B_g(\delta)$ is the geodesic ball of radius $\delta < \iota_g$,
 the injectivity radius $\iota_g$ of the given metric $g$.
Therefore, using the identity 
$$
\langle dx^j \wedge dx^{j_0}, dx^{i_0}\wedge dx^{j_0} \rangle = \delta_{ji_0} + O(|x|)
$$
which follows from \eqref{eq:g}, we derive
\beastar
\langle Q_{x_0}^\perp, d\beta \rangle(x) & = & \chi(r)  b_{i_0j_0}^2 + \frac{\chi'(r)}{r} \sum_j x^j 
\delta_{ji_o} b_{i_0j_0}^2 + O(|x|)b_{i_0j_0}\\
& = & \chi(r)  b_{i_0j_0}^2 + \frac{\chi'(r)}{r}  x^{i_0} 
b_{i_0j_0}^2 + O(|x|)b_{i_0j_0} .
\eeastar
We then have
\beastar
\int_U  \langle Q^\perp_{x_0}, d\beta \rangle\,r\, dr \, d\Theta_g & = & 
\int_{B_g(\delta_2) }\chi(r)  b_{i_0j_0}^2 \, r\, dr \, d\Theta_g
+ \int_{B_g(\delta_2)}   \frac{\chi'(r)}{r} \ x^{i_0} b_{i_0j_0}^2\, r\, dr\, d\Theta_g \\
&{}& + \int_{B_2(\delta_2)} O(\delta_2)  r\, dr\, d\Theta_g.
\eeastar

Then we have the integral
\be\label{eq:1st-integral}
\int_U \chi(r)  b_{i_0j_0} ^2\, r\, dr \, d\Theta_g \geq \int_{B_g(\delta_1)} b_{i_0j_0}^2 \, r\, dr \, d\Theta_g
\geq \frac12 b_{i_0j_0}^2 \delta_1^2 > 0
\ee
On the other hand, the integral of the middle summand
against $d\vol_g = r^{N-1}dr\, d\Theta_g$ vanishes 
by the skew-symmetry of the coordinate function $x^{i_0}$.

Finally, obviously we have
$$
\int_{B_2(\delta_2)} O(\delta_2)  r\, dr\, d\Theta_g \sim O(\delta_2^3).
$$
Therefore we obtain
\beastar
\int_U  \langle Q_{x_0}^\perp, d\beta \rangle\,r\, dr \, d\Theta_g \geq \frac12 b_{i_0j_0}^2 \delta_1^2
- C\delta_2^3
\eeastar
for some constant $C> 0$ depending only on $g$ and the geodesic normal coordinates.
Therefore if we choose $0 < \delta_1 < \delta_2 < \iota_g$ so that
$$
 \frac12 b_{i_0j_0}^2 \delta_1^2 - C\delta_2^3 > 0,
$$
i.e., if we choose them so that 
\be\label{eq:key-inequality}
\delta_1 < \delta_2, \quad
\left(\frac{\delta_1}{\delta_2}\right)^2 > \left(\frac{2C}{b_{i_0j_0}^2}\right)  \delta_2
\ee
we have
$$
\int_U  \langle Q_{x_0}^\perp, d\beta \rangle\,r\, dr \, d\Theta_g > 0.
$$
Note that the second inequality of \eqref{eq:key-inequality}
 is obvisouly possible by taking $\delta_2> \delta_1$
as close as possible, while letting $\delta_2$ sufficiently small. For example, we may choose
$$
\delta_2 = \min\left\{\frac12, \frac{b^2_{i_0j_0}}{4C} \right\} , \quad  \delta_1= \frac34 \delta_2
$$
which will do the purpose. This non-vanishing contradicts to the vanishing \eqref{eq:intzetadbeta=0},
which finishes the proof of the lemma.

\def\cprime{$'$}
\providecommand{\bysame}{\leavevmode\hbox to3em{\hrulefill}\thinspace}
\providecommand{\MR}{\relax\ifhmode\unskip\space\fi MR }
\providecommand{\MRhref}[2]{%
  \href{http://www.ams.org/mathscinet-getitem?mr=#1}{#2}
}
\providecommand{\href}[2]{#2}



\end{document}